\definecolor{color0}{RGB}{34,113,178}
\definecolor{color1}{RGB}{61,183,233}
\definecolor{color2}{RGB}{247,72,165}
\definecolor{color3}{RGB}{53,155,115}
\definecolor{color4}{RGB}{213,94,0}
\definecolor{color5}{RGB}{230,159,0}
\definecolor{color6}{RGB}{114,228,97}
\definecolor{color7}{RGB}{53,37,94}
\definecolor{color8}{RGB}{128,0,103}
\colorlet{blue}{color0}
\colorlet{pink}{color2}
\theoremstyle{plain}
\newtheorem{theorem}{Theorem}
\newtheorem*{theorem*}{``Theorem''}
\newtheorem{lemma}{Lemma}
\newtheorem*{lemma*}{Lemma}
\theoremstyle{definition}
\newtheorem*{definition}{Definition}
\newtheorem{example}{Example}
\newtheorem*{remark}{Remark}
\theoremstyle{plain}
\newtheorem*{re@theorem}{\re@title}
\newenvironment{retheorem}[1]{%
 \def\re@title{\cref{#1}}%
 \begin{re@theorem}}%
 {\end{re@theorem}}
\newcommand{\from}{\colon}
\newcommand{\xto}[1]{\xrightarrow{#1}}
\newcommand{\wt}[1]{\widetilde{#1}}
\newcommand{\Conf}{\mathrm{Conf}}
\newcommand{\id}{\mathrm{id}}
\newcommand{\R}{\mathbb{R}}
\newcommand{\RP}{\mathbb{R}P}
\newcommand{\Z}{\mathbb{Z}}
\newcommand{\N}{\mathbb{N}}
\newcommand{\CC}{\mathcal{C}}
\newcommand{\TT}{\mathbb{T}}
\newcommand{\Eq}{\mathrm{Eq}}
\newcommand{\HH}{\mathcal{H}}
\newcommand{\JJ}{\mathcal{J}}
\newcommand{\gbox}{\tikz{\node[shape=rectangle,fill=black!20,draw,inner sep=0.7mm,align=center] (char) {};}}
\newcommand{\wbox}{\tikz{\node[shape=rectangle,draw,inner sep=0.7mm,align=center] (char) {};}}
\newcommand*\Gc{G^{\hspace{1pt}\raisebox{0.8pt}{\gbox}}}
\newcommand*\wGc{\wt{G}^{\hspace{1pt}\raisebox{0.8pt}{\gbox}}}
\newcommand*\Gcf{G^{\hspace{1pt}\raisebox{0.8pt}{\wbox}}_{f}}
\newcommand*\wGcf{\wt{G}^{\hspace{1pt}\raisebox{0.8pt}{\wbox}}_{f}}
\DeclareMathOperator{\pr}{pr}
\DeclareMathOperator{\sk}{sk}
\DeclareMathOperator{\st}{st}
\DeclareMathOperator{\ord}{ord}
\DeclareMathOperator{\Int}{Int}
\DeclareMathOperator{\codim}{codim}
\author{Alexey Gorelov}
\title{Lifting maps between graphs to embeddings}
\subjclass{Primary 05C10, 57M15, 57Q35; Secondary 57N35, 05E45.}
\keywords{graph, polyhedron, simplicial complex, embedding, computational topology}
\address{Université Grenoble Alpes, CNRS, Institut Fourier, 38000 Grenoble, France}
\email{algor512@gmail.com}
\begin{document}

\maketitle

\begin{abstract}
  In this paper, we study conditions for the existence of an embedding
  $\wt{f} \from P \to Q \times \R$ such that $f = \pr_Q \circ \wt{f}$, where $f \from P \to Q$ is a
  piecewise linear map between polyhedra. Our focus is on non-degenerate maps between graphs, where
  non-degeneracy means that the preimages of points are finite sets.

  We introduce combinatorial techniques and establish necessary and sufficient conditions for the
  general case. Using these results, we demonstrate that the problem of the existence of a lifting reduces
  to testing the satisfiability of a 3-CNF formula. Additionally, we construct
  a counterexample to a result by V.~Po\'{e}naru on lifting of smooth immersions to
  embeddings.

  Furthermore, by establishing connections between the stated problem and the approximability by
  embeddings, we deduce that, in the case of generic maps from a tree to a segment, a weaker
  condition becomes sufficient for the existence of a lifting.
\end{abstract}

\section{Introduction}

\subsection{Background}

In this paper, we consider the following problem: given compact one-dimensional polyhedra $P$ and
$Q$, and a piecewise linear map $f \from P \to Q$, what conditions must be satisfied for the
existence of a piecewise linear embedding $\wt{f} \from P \to Q \times \R$ such that
$f = \pr_Q \circ \wt{f}$? If such an embedding exists, we refer to it as a \emph{(codimension one)
  lifting of $f$ to an embedding}, and we say that $f$ \emph{lifts to an embedding}\index{lifting}. This problem
can also be formulated in other contexts, such as for smooth maps between manifolds or continuous
maps between topological spaces. Additionally, it is possible to consider liftings of codimension
$k$, namely embeddings $\widetilde{f} \from P \to Q \times \mathbb{R}^k$ such that
$f = \pr_Q \circ \widetilde{f}$. In this case, maps that admit such lifting are sometimes referred
to as $k$-prems\index{$n$-prem} (short for $k$-projected embedding), see~\cite{szucs,akhmet,mel}.

In this paper, our focus is on non-degenerate maps. A piecewise linear map $f \from P \to Q$ is
called \emph{non-degenerate}\index{non-degeneracy} if the set $f^{-1}(q)$ is finite for every point $q \in
Q$. Additionally, we call a simplicial map $f \from K \to L$ \emph{non-degenerate} if the
corresponding piecewise linear map $|f| \from |K| \to |L|$ is non-degenerate. It is easy to see that
a simplicial map $f \from K \to L$ between finite simplicial complexes is non-degenerate if and only
if it is injective on each simplex; that is, a $k$-simplex $A \in K$ maps to a $k$-simplex
$f(A) \in L$.

In the one-dimensional case, non-degenerate simplicial maps are synonymous with graph
homomorphisms. This establishes an interesting connection between the problem we are addressing in
this paper and certain problems in graph theory. In fact, a graph homomorphism $f \from G \to H$ is
known as an $H$-colouring of the graph $G$. Each lifting of the map $f$ to an embedding introduces
orders on sets of vertices of the same colour, which, in turn, determine the orders on the
edges. These constructions, which involve colourings along with orders, are actively studied in
graph theory, see e.g.~\cite{track}. For example, so-called track layouts\index{track layout} can be viewed as liftings
of graph homomorphisms to complete graphs.\footnote{It is worth noting that the author has not come
  across research on track layouts for other types of colouring.}

It should be highlighted that the one-dimensional piecewise linear version of the lifting to an
embedding problem plays a special role among other versions of this problem. For instance, the
problem of liftability to an embedding of smooth immersions between manifolds can be reduced to the
one-dimensional piecewise linear version of the problem~\cite{carter_saito_book,poen_fr}. Also, as
we will show below in~\cref{thm:lifting_sk}, the multidimensional piecewise linear version of the
problem reduces to the one-dimensional case.

\subsection{Structure}

Let us now outline the general structure of the paper.

The second section introduces several combinatorial concepts that are utilized throughout the rest
of the paper. In particular, we prove the following necessary conditions for the existence of a
lifting to an embedding:

\begin{retheorem}{thm:obstructors}
  Let $f \from K \to L$ be a non-degenerate simplicial map between finite simplicial complexes such
  that $|f| \from |K| \to |L|$ lifts to an embedding. Then the following holds:
  \begin{enumerate}
  \item all the covering maps $p_n \from |K^{(n)}_f| \to |\wt{K}^{(n)}_f|,\ n > 1$ are trivial;
  \item there are no $n$-obstructors for $f$ for any $n > 1$.
  \end{enumerate}
\end{retheorem}

Here $K^{(n)}_f$ and $\wt{K}^{(n)}_f$ denote simplicial models for the ordered and unordered
configuration spaces of $n$ points with the same image under $f$, and an $n$-obstructor is a path in
$K^{(n)}_f$ that realises a cyclic permutation of points; the reader may refer to the next section
for the formal definitions.

Further, we prove two necessary and sufficient conditions for the existence of a lifting.

\begin{retheorem}{thm:orders}
  Let $f \from K \to L$ be a non-degenerate simplicial map between finite simplicial complexes. Then
  the piecewise linear map $|f| \from |K| \to |L|$ lifts to an embedding if and only if there is a
  collection of mutually compatible linear orders on the sets $f^{-1}(v),\ v \in V(L)$.

  Furthermore, there is a bijection between such collections of linear orders and the isotopy classes of liftings.
\end{retheorem}

Informally, the compatibility here means that the orders on the vertices induce orders on the
simplices; again, we postpone the formal definition until the second section.

\begin{retheorem}{thm:gamma_cond}
  Let $f \from K \to L$ be a non-degenerate simplicial map between finite simplicial
  complexes. Assuming that the covering map $p_2 \from |K^{(2)}_f| \to |\wt{K}^{(2)}_f|$ is trivial,
  there is a 3-CNF formula $\Gamma_f$,\footnote{A boolean formula $\Gamma$ is said to be in 3-CNF\index{3-CNF} if
    it can be expressed as $\Gamma = \bigwedge_{i} (\alpha_{i} \vee \beta_{i} \vee \gamma_{i})$,
    where each of $\alpha_{i}$, $\beta_{i}$, and $\gamma_{i}$ is either a variable or its negation.}
  such that the piecewise linear map $|f| \from |K| \to |L|$ lifts to an embedding if and only if
  $\Gamma_f$ is satisfiable. Furthermore, there is a bijection between the assignments that satisfy
  $\Gamma_f$ and the isotopy classes of liftings.
\end{retheorem}

We will explicitly describe the formula $\Gamma_f$ just before we state~\cref{thm:gamma_cond} in the
second section.

In the third section, we explore connections between the problem of lifting of maps between graphs
and the problem of lifting of smooth immersions to embeddings. In particular we show that the
assumptions in a result stated by V.~Po\'{e}naru in~\cite{poen_fr} need to be strengthened by
including the condition of the satisfiability of $\Gamma_{f}$. The main result in this section
demonstrates that without this stronger assumption, the main result on the existence of liftings of
immersions stated in~\cite{poen_fr} cannot hold:

\begin{retheorem}{thm:realisation_gamma}
  Each 3-CNF formula $\Gamma$ of a specific form, as described in the statement
  of~\cref{thm:realisation_gamma} on page~\pageref{thm:realisation_gamma}, can be realised
  as $\Gamma_f$ of a map $f \from G \to H$ between graphs. Moreover, there exists a generic
  immersion $g \from S \looparrowright B$ of a surface $S$ with boundary into a handlebody $B$, such
  that $H$ is the graph of multiple points of $g$, and $f$ is the restriction of $g$ on $f^{-1}(H)$.
\end{retheorem}

In the last section, we establish a connection between the problem of the existence of
liftings of maps between graphs and the problem of approximation by embeddings. By using this
connection, we deduce the main result of the section:

\begin{retheorem}{thm:complete_2obs}
  For the generic simplicial maps from a tree to a segment the non-existence of 2-obstructors is a
  necessary and sufficient condition for the existence of a lifting.
\end{retheorem}

\subsection{Conventions}

Throughout this paper, we adhere to the following notation and conventions.

We use the letters $K$ and $L$ to represent finite simplicial complexes, with $|K|$ and $|L|$
denoting their geometric realisations. All maps between simplicial complexes are assumed to be
simplicial. Additionally, for a simplicial map $f \from K \to L$, we denote the induced piecewise
linear map between $|K|$ and $|L|$ by $|f|$.

Given a simplicial complex or a graph $K$, we use $V(K)$ and $E(K)$ to denote the set of vertices and
the set of edges of $K$, respectively. Moreover, $\sk_n K$ denotes the $n$-skeleton of $K$ --- a
simplicial complex consisting of all simplices of $K$ of dimension lower than or equal to $n$.

As previously noted, non-degenerate simplicial maps between one-dimensional simplicial complexes are
essentially graph homomorphisms. Occasionally, particularly in examples, it will be more convenient
to consider multigraph homomorphisms.

Let $G$ and $H$ be multigraphs.\index{multigraph}\footnote{Graphs that can contain loops and
  multiple edges.}  A pair $(f_V, f_E)$ of maps $f_V \from V(G) \to V(H)$ and
$f_E \from E(G) \to E(H)$ is called a \emph{multigraph homomorphism}\index{multigraph!homomorphism}
$f \from G \to H$ if, for any edge $e \in E(G)$, the map $f_V$ takes the endpoints of $e$ to the
endpoints of $f_E(e)$.

As a graph, a multigraph $G$ has a geometric realisation denoted by $|G|$. Specifically, $|G|$ is the
union of $|V(G)|$ points $p_{v}$ representing the vertices $v \in V(G)$, and $|E(G)|$ segments
$I_{e}$ representing the edges $e \in E(G)$, where the segments are glued to the points according to
the incidence relation.\footnote{Certainly, loops are represented by loops (or segments with
  both endpoints identical) in $|G|$.} Clearly, $|G|$ is a one-dimensional polyhedron uniquely
defined up to piecewise linear homomorphism.

Furthermore, a multigraph homomorphism $f \from G \to H$ induces a piecewise linear map $|f|$
between the geometric realisations of $G$ and $H$, which maps $p_{v} \in |G|$ to
$p_{f_{V}(v)} \in |H|$ and homeomorphically maps $I_{e}$ to $I_{f_{E}(e)}$. Thus, by taking
triangulations of $|G|$ and $|H|$ in which $|f|$ is simplicial, we can obtain a simplicial
map. Therefore, whenever multigraph homomorphisms are used in this text, they can be replaced by
simplicial maps after subdividing the involved multigraphs. However, to simplify the notation, we
choose to use multigraph homomorphisms in some places of the text.

\section{Combinatorics of liftings to embeddings}

In this section, we will discuss combinatorial techniques that are useful for studying the existence
of liftings to embeddings of maps between graphs.

Following our conventions, let $K$ and $L$ be simplicial complexes, and $f \from K \to L$ be a
non-degenerate simplicial map between them. Let us denote by $K^{(n)}_f$ a simplicial complex whose
vertices are $n$-tuples of distinct vertices of $K$ that map to the same vertex by $f$.
Additionally, a set $\{(v_1^1, \dots, v_n^1), \dots, (v_1^{k+1}, \dots, v_n^{k+1})\}$ of vertices
forms a $k$-simplex if for each $j = 1, \dots, n$ the set $\{v_j^1, \dots, v_j^{k+1}\}$ forms a
$k$-simplex $A_j$ in $K$. Note that $A_j \cap A_s = \varnothing$ for all $j \neq s$, and all the
simplices $A_j$ have the same image under $f$.

The symmetric group $S_n$ acts naturally on $K^{(n)}_f$ by permuting the points in $n$-tuples. Thus,
we can define the unordered version of $K^{(n)}_f$, which we denote by
$\wt{K}^{(n)}_f = K^{(n)}_f / S_n$. Clearly, $S_n$ also acts on the geometric realisation
$|K^{(n)}_f|$ of $K^{(n)}_f$. This action is properly discontinuous, so it induces a covering map
$p_n \from |K^{(n)}_f| \to |\wt{K}^{(n)}_f|$ that forgets the order of points. Moreover, it is
clear that $p_n$ is a principal $S_n$-bundle.\footnote{For a short introduction to this topic the
  reader may refer to~\cite{principal_bundles}.}

\begin{definition}[$n$-obstructor]
  An \emph{$n$-obstructor}\index{$n$-obstructor} for $f \from |K| \to |L|$ is a vertex
  $(x_1, x_2, \dots, x_n) \in K^{(n)}_f$ and a path from $(x_1, x_2, \dots, x_n)$ to
  $(x_n, x_1, x_2, \dots, x_{n-1})$ in $K^{(n)}_f$.
\end{definition}

The reader may refer to~\cref{ex:siek} below to see an example of 2-obstructor.

\begin{lemma}\label{lemma:equiv_obstr}
  For any non-degenerate simplicial map $f \from K \to L$ and $N > 1$, the following statements are equivalent:
  \begin{enumerate}
  \item all the covering maps $p_k \from |K^{(k)}_f| \to |\wt{K}^{(k)}_f|$, $1 < k \leq N$ are trivial;
  \item there are no $k$-obstructors for $f$ for $1 < k \leq N$.
  \end{enumerate}
\end{lemma}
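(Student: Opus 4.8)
The plan is to reduce the general statement to the standard fact that a principal $S_k$-bundle over a connected base (a graph) is trivial if and only if the associated covering has a section over each connected component, and that this in turn is governed by the monodromy representation $\pi_1 \to S_k$. So first I would spell out the direction $(1) \Rightarrow (2)$, which is the easy one: if $p_k$ is trivial, then $|K^{(k)}_f|$ splits as $k!$ disjoint copies of $|\wt K^{(k)}_f|$ permuted by $S_k$, so any path in $|K^{(k)}_f|$ from $(x_1,\dots,x_k)$ to a point in the same $S_k$-orbit must actually stay in one copy, forcing the endpoints to be equal; since the cyclic permutation $(x_k,x_1,\dots,x_{k-1})$ of a tuple of distinct points is never equal to the tuple itself, there is no $k$-obstructor. (One should note that $p_k$ trivial does not literally require $|\wt K^{(k)}_f|$ connected; triviality is meant componentwise, or equivalently the monodromy around every loop is trivial — I would state this carefully, perhaps recalling that a principal $G$-bundle is trivial iff it has a section, and over a $1$-complex this is equivalent to trivial monodromy on each component.)

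For the harder direction $(2) \Rightarrow (1)$, I would argue contrapositively: suppose some $p_k$, $1 < k \le N$, is non-trivial. Then on some connected component of $|\wt K^{(k)}_f|$ the monodromy representation $\rho \from \pi_1 \to S_k$ has non-trivial image. Pick a loop $\gamma$ whose monodromy $\sigma = \rho([\gamma])$ is a non-identity permutation; lifting $\gamma$ to $|K^{(k)}_f|$ starting at a vertex $(x_1,\dots,x_k)$ yields a path to $(x_{\sigma(1)},\dots,x_{\sigma(k)})$. This is not yet a $k$-obstructor because $\sigma$ need not be a $k$-cycle. The key combinatorial step is to extract a cyclic obstructor of some length $m \le k$ from $\sigma$: choose a non-trivial cycle $(i_1\, i_2\, \cdots\, i_m)$ appearing in the cycle decomposition of $\sigma$, and observe that the sub-tuple $(x_{i_1},\dots,x_{i_m})$ is a vertex of $K^{(m)}_f$, while the obvious "forgetful" simplicial map $K^{(k)}_f \to K^{(m)}_f$ (remembering only the coordinates indexed by $i_1,\dots,i_m$) is well-defined and continuous, hence sends our lifted path to a path in $K^{(m)}_f$ from $(x_{i_1},\dots,x_{i_m})$ to its cyclic shift. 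After possibly relabelling the $m$ coordinates so that the cycle becomes the standard cyclic permutation, this is precisely an $m$-obstructor, with $1 < m \le k \le N$. That contradicts (2).

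The main obstacle — really the only non-formal point — is this passage from an arbitrary monodromy permutation to a genuine cyclic obstructor: one must check that the coordinate-forgetting map $K^{(k)}_f \to K^{(m)}_f$ is genuinely simplicial (a $k$-simplex of $K^{(k)}_f$ requires the $j$-th coordinates to span a simplex for \emph{all} $j$, so in particular for $j \in \{i_1,\dots,i_m\}$, and disjointness of the corresponding simplices is inherited), and that conjugating $\sigma$ to put a chosen cycle into standard cyclic form corresponds to the $S_k$-action, which merely relabels the basepoint tuple and does not affect the existence of an obstructor. I would also remark that the statement is uniform in $N$ in the trivial sense that it is really the equivalence "for each fixed $k$: $p_k$ trivial $\iff$ no $k$-obstructor" — the proof above shows $(2)\Rightarrow(1)$ at level $k$ by producing an obstructor at some level $m\le k$, so the range $1<k\le N$ is closed under this; this is why the two families of conditions match up for every threshold $N$.
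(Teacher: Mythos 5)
Your proposal is correct and follows essentially the same route as the paper: both directions hinge on the same key step of realizing a non-identity permutation $\sigma$ by a path of $k$-tuples and projecting, via the coordinate-forgetting map, onto the entries of a single non-trivial cycle of $\sigma$ to produce an obstructor of length $m$ with $1 < m \le k \le N$. The only cosmetic differences are that you phrase triviality through the monodromy representation and cite the standard classification of principal bundles, whereas the paper shows the $S_k$-action on connected components is free and then constructs a continuous section explicitly (and first replaces $\sigma$ by a prime-order power before extracting a cycle, an inessential normalization).
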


\begin{proof}
  The implication $(1) \Rightarrow (2)$ is straightforward. Indeed, assume there is a
  $k$-obstructor that defines a path $\gamma \from I \to |K^{(k)}_f|$. Then $p_k \circ \gamma$ is a
  loop in $|\wt{K}^{(k)}_f|$, but $\gamma$ is not a loop in $|K^{(k)}_f|$, implying that $p_k$ is not trivial.

  Now, let us prove the implication $(2) \Rightarrow (1)$. Assume there are no $k$-obstructors for
  any $1 < k \leq N$. Let $|K^{(k)}_f| = \bigsqcup_i C_i$, where $\{ C_i \}_i$ are connected
  components of $|K^{(k)}_f|$.\footnote{Since the polyhedra are locally path-connected spaces,
    connectedness and path-connectedness are equivalent in this case.}

  Clearly, the action of $S_k$ on $|K^{(k)}_f|$ induces an action of $S_k$ on $\{ C_i \}_i$. Let us
  prove that this action is free. Suppose it is not. Then there exists $\sigma \neq \id \in S_k$ and
  a connected component $C$ such that $\sigma(C) = C$. Let $p$ be a prime divisor of $\ord(\sigma)$,
  and define $\sigma' = \sigma^{\sfrac{\ord(\sigma)}{p}}$. Clearly,
  $\sigma'(C) = \sigma^{\sfrac{\ord(\sigma)}{p}}(C) = C$. Moreover, since $\ord(\sigma') = p$, the
  cycle decomposition of $\sigma'$ consists of cycles of length one, and one or more disjoint cycles
  of length $p$. Let $(s_1\; s_2\; \dots\; s_p)$ be one of them.

  Now consider any vertex $x \in C$. As $\sigma'(x) \in C$, there exists a path $\gamma$ in $C$
  connecting $x$ and $\sigma'(x)$. In fact, it defines $k$ paths in $K$
  \[
    \begin{aligned}
      \gamma_1 \from x_1 &\to y_1^{1} &\to \dots &\to y_1^{l} &\to x_{\sigma'(1)} \\
      \gamma_2 \from x_2 &\to y_2^{1} &\to \dots &\to y_2^{l} &\to x_{\sigma'(2)} \\
      \vdots             &            & \vdots   &            &\vdots\hspace{1em} \\
      \gamma_k \from x_k &\to y_k^{1} &\to \dots &\to y_k^{l} &\to x_{\sigma'(k)} \\
    \end{aligned}
  \]
  such that $y_i^k \neq y_j^k$ for $i \neq j$, and $f(y_i^k) = f(y_j^k)$. By taking the paths $\gamma_{s_1}, \dots,
  \gamma_{s_p}$, we obtain a path in $K^{(p)}_f$. It can be seen that this path defines a
  $p$-obstructor.

  Therefore, the action of $S_n$ on $\{ C_i \}_i$ is free. Let $\{ O_\alpha \}_\alpha$ be
  the set of orbits of the action. For each orbit $O_\alpha$, fix an arbitrary connected component
  $C_\alpha \in O_\alpha$, and let $T = \bigsqcup_{\alpha} C_\alpha$.

  For every point $x \in |\wt{K}^{(k)}_f|$, the preimages $p_k^{-1}(x)$ form an orbit of the action of
  $S_{n}$ on $|K^{(k)}_f|$. Thus, the connected components containing these preimages also form an
  orbit. Moreover, since the action of $S_{n}$ on them is free, two distinct preimages cannot lie in the
  same connected component. The construction of $T$ ensures that exactly one of the points in
  $p_k^{-1}(x)$ lies in some of $C_\alpha \subset T$. Therefore, the restriction $p_k \big|_T$ is bijective,
  and we can define a map $s \from |\wt{K}^{(k)}_f| \to |K^{(k)}_f|$ by putting
  $s(x) = \left(p_k \big|_T\right)^{-1}(x)$.

  Note that $p_k$ is an open map; indeed, for any open set $U \subset |K^{(k)}_f|$ we have
  $p_k^{-1}(p_k(U)) = \bigcup_{\sigma \in S_k} \sigma(U)$, and all the sets $\sigma(U)$ are
  open. Thus, for an open $U \subseteq |K^{(k)}_f|$, we have $s^{-1}(U) = s^{-1}(U \cap T) = p_k(U \cap
  T)$ which is open since $p_k$ is open, and both $U$ and $T$ are open.

  Therefore, $s$ is a continuous map defining a section of a principal $S_k$-bundle $p_k$, which
  implies that $p_k$ is trivial by~\cite[Proposition 2.1]{principal_bundles}.
\end{proof}

Now, we will state necessary conditions for the existence of a lifting. It is worth mentioning that
the necessity of~\cref{thm:obstructors:second}, for the case of generic immersions between manifolds
and in a slightly different formulation, is proven in~\cite[Lemme 1.1]{poen_fr}. The necessity
of~\cref{thm:obstructors:first} for $n=2$ was discussed in~\cite[\S 3.1]{akh_rep_skop} for maps to
$\R^N$ and proven in~\cite[Corollary 3]{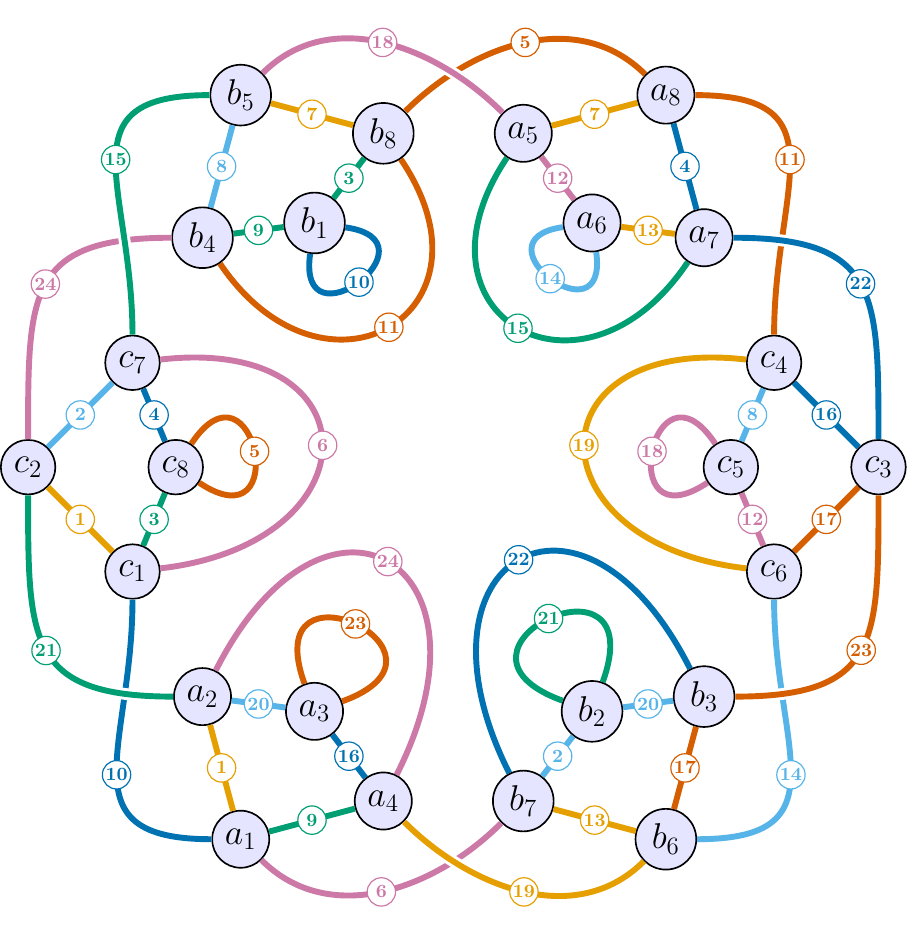} for generic immersions of surfaces into $\R^3$.

\begin{theorem}\label{thm:obstructors}
  Let $f \from K \to L$ be a non-degenerate simplicial map such that $|f| \from |K| \to |L|$ lifts
  to an embedding. Then the following holds:
  \begin{enumerate}
  \item\label{thm:obstructors:first} all the covering maps $p_n \from |K^{(n)}_f| \to |\wt{K}^{(n)}_f|,\ n > 1$ are trivial;
  \item\label{thm:obstructors:second} there are no $n$-obstructors for $f$ for any $n > 1$.
  \end{enumerate}
\end{theorem}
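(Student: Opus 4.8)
By \cref{lemma:equiv_obstr}, statements \cref{thm:obstructors:first} and \cref{thm:obstructors:second} are equivalent, so it suffices to prove one of them; I would prove \cref{thm:obstructors:first}. The plan is to produce, from a lifting $\wt{f} \from |K| \to |L| \times \R$, a global section of each principal $S_n$-bundle $p_n \from |K^{(n)}_f| \to |\wt{K}^{(n)}_f|$, which by \cite[Proposition 2.1]{principal_bundles} forces $p_n$ to be trivial. The section comes from the extra real coordinate: writing $\wt{f} = (f, h)$ with $h \from |K| \to \R$, the condition that $\wt{f}$ is an embedding means exactly that $h$ is injective on each fiber $f^{-1}(q)$, $q \in |L|$. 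Hence for every unordered $n$-tuple of distinct points with a common image we get a canonical ordering, namely the one induced by increasing values of $h$.

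The main steps are as follows. First I would set up the bijection between points of $|\wt{K}^{(n)}_f|$ and the relevant configurations: a point of $|\wt{K}^{(n)}_f|$ corresponds to an unordered $n$-element subset $\{p_1, \dots, p_n\} \subset |K|$ of pairwise distinct points lying in pairwise disjoint simplices with the same image under $|f|$, and a point of $|K^{(n)}_f|$ is such a configuration together with an ordering of the $n$ points. Second, given such an unordered configuration, I would use that $|f|(p_1) = \dots = |f|(p_n)$ together with injectivity of $h$ on $|f|^{-1}(|f|(p_1))$ to conclude $h(p_1), \dots, h(p_n)$ are pairwise distinct real numbers; ordering the $p_i$ so that $h(p_{i_1}) < \dots < h(p_{i_n})$ gives a well-defined element $s(x) \in p_n^{-1}(x)$. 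Third, I would check continuity of the resulting map $s \from |\wt{K}^{(n)}_f| \to |K^{(n)}_f|$: since $p_n$ is a covering map, it is enough to observe that $s$ is locally the inverse of $p_n$, and locally the ordering by $h$-values is locally constant because $h$ is continuous and the $h$-values stay distinct in a neighbourhood (the strict inequalities $h(p_{i_j}) < h(p_{i_{j+1}})$ are open conditions). Finally, $s$ is by construction a section of $p_n$, so $p_n$ is trivial.

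The step I expect to require the most care is the continuity argument, and in particular making precise how a point of $|\wt{K}^{(n)}_f|$ "is" an unordered configuration of points in $|K|$ that varies continuously — one must be slightly careful because near a face of a simplex some coordinates of the configuration may collide in $|K|$, but by the definition of $K^{(n)}_f$ the $n$ points always live in pairwise disjoint simplices, so the points themselves never collide and the evaluation map from $|K^{(n)}_f|$ (or its quotient) to the configuration space of $|K|$ is genuinely continuous; composing with $h$ and using that the induced order is locally constant finishes the argument. Everything else — the reduction to a single $n$ via \cref{lemma:equiv_obstr}, the extraction of $h$, the disjointness of fibers — is routine.
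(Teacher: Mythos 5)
Your argument is correct and is essentially the paper's proof: both extract $h = \pr_\R \circ \wt{|f|}$, use that the embedding property forces the $h$-values of an $n$-tuple in a fiber to be pairwise distinct, and turn the resulting ordering into a trivialization of $p_n$ via \cite[Proposition 2.1]{principal_bundles}, then invoke \cref{lemma:equiv_obstr} for the obstructor statement. The only cosmetic difference is that the paper records the ordering as an $S_n$-equivariant map $|K^{(n)}_f| \to \Conf_n(\R) \simeq S_n$ and builds a bundle isomorphism, whereas you build the corresponding section directly.
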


\begin{proof}
  Let us begin by noting that the points of $|K^{(n)}_f|$ and $|\wt{K}^{(n)}_f|$ can be
  considered as ordered and unordered $n$-tuples of distinct points of $|K|$ with the same image.

  Suppose there is a lifting $\wt{|f|} \from |K| \to |L| \times \R$. Let
  $h = \pr_R \wt{|f|} \from |K| \to \R$. This induces a continuous map that takes $(x_1, \dots, x_n)$
  to $(h(x_1), \dots, h(x_n))$. Since $\wt{|f|}$ is an embedding, all $h(x_i)$'s are distinct. Thus,
  it actually maps $|K^{(n)}_f|$ to the configuration space $\Conf_n(\R)$ of $n$ distinct points of
  $\R$. Moreover, it is equivariant with respect to the actions of $S_n$ on both
  spaces.\footnote{$F \from X \to Y$ is equivariant with respect to actions of a group $G$ on $X$
    and $Y$ if $F(g(x)) = g(F(x))$ for any $x \in X$ and $g \in G$.} It is easy to see that $\Conf_n(\R)$
  equivariantly deformation retracts to the discrete space $\Conf_{n}(\{1, 2, \dots, n\})$ of all $n!$ permutations of the
  numbers $\{1, \dots, n\}$ with the natural action of $S_n$. By taking an equivariant isomorphism
  that sends the $n$-tuple $(1, 2, \dots, n)$ to $\id \in S_{n}$, we can identify $\Conf_{n}(\{1, 2, \dots, n\})$ with $S_n$.

  Therefore, we have an equivariant map $H \from |K^{(n)}_f| \to S_n$. Now define
  $g \from |K^{(n)}_f| \to S_n \times |\wt{K}^{(n)}_f|$ as $g(x) = (H(x), p_n(x))$. Clearly, $g$ is
  a morphism between the principal $S_n$-bundles $p_n$ and the trivial bundle
  $\pr_{|\wt{K}^{(n)}_f|} \from S_n \times |\wt{K}^{(n)}_f| \to |\wt{K}^{(n)}_f|$, thus, it is an
  isomorphism by~\cite[Proposition 2.1]{principal_bundles}. Therefore, $p_n$ is trivial.

  The non-existence of $n$-obstructors then follows from~\cref{lemma:equiv_obstr}.
\end{proof}

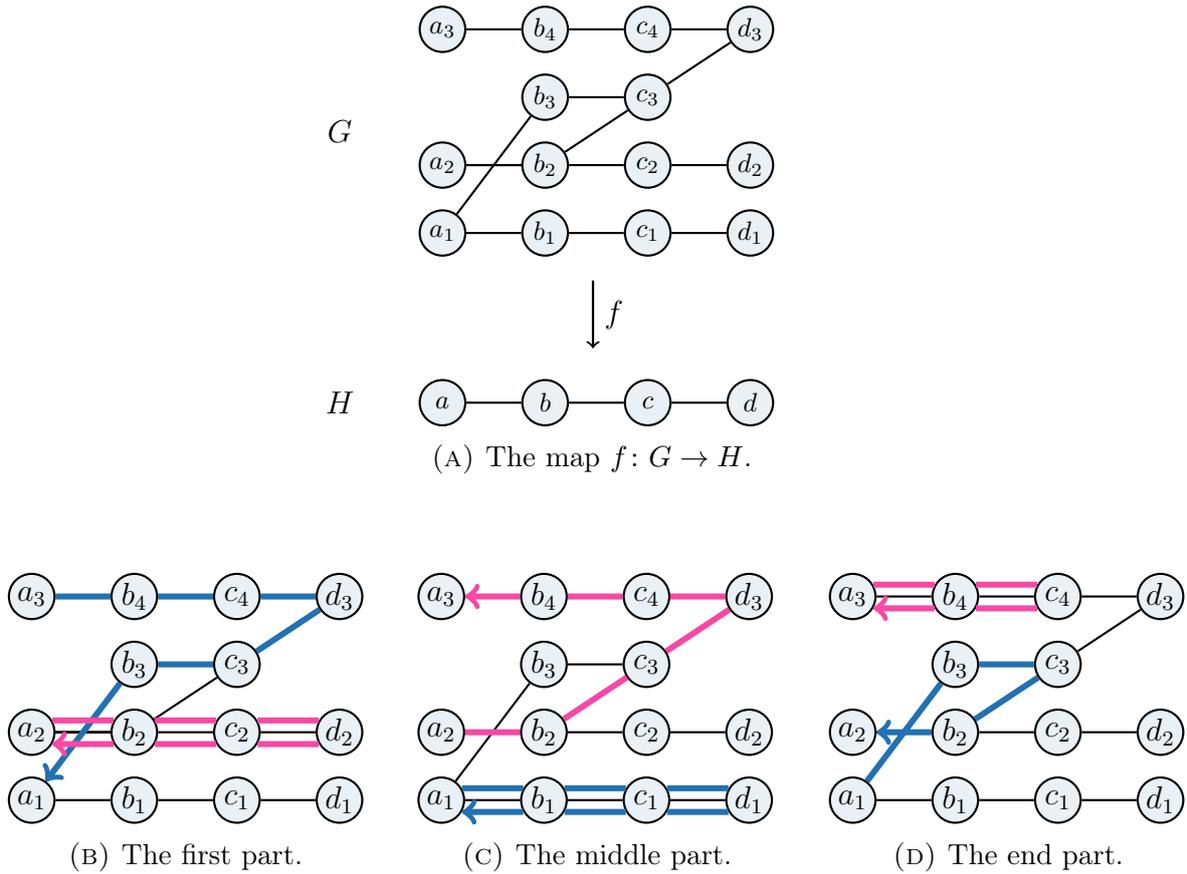
\begin{figure}[htb!]
  \centering
  \begin{subfigure}{0.99\textwidth}
    \centering
  \hspace{-3em}\begin{tikzpicture}[scale=0.9,thick,main
    node/.style={circle,fill=blue!10,draw,font=\sffamily\footnotesize\bfseries,minimum size=0.6cm,inner sep=0pt}]

    \node at (-1.5, 5) (K) {$G$};

    \node[main node] at (0, 6.5) (a3) {$a_3$};
    \node[main node] at (1.5, 6.5) (b4) {$b_4$};
    \node[main node] at (3, 6.5) (c4) {$c_4$};
    \node[main node] at (4.5, 6.5) (d3) {$d_3$};

    \node[main node] at (1.5, 5.5) (b3) {$b_3$};
    \node[main node] at (3, 5.5) (c3) {$c_3$};

    \node[main node] at (0, 4.5) (a2) {$a_2$};
    \node[main node] at (1.5, 4.5) (b2) {$b_2$};
    \node[main node] at (3, 4.5) (c2) {$c_2$};
    \node[main node] at (4.5, 4.5) (d2) {$d_2$};

    \node[main node] at (0, 3.5) (a1) {$a_1$};
    \node[main node] at (1.5, 3.5) (b1) {$b_1$};
    \node[main node] at (3, 3.5) (c1) {$c_1$};
    \node[main node] at (4.5, 3.5) (d1) {$d_1$};

    \node at (-1.5, 1) (L) {$H$};
    \node[main node] at (0, 1) (a) {$a$};
    \node[main node] at (1.5, 1) (b) {$b$};
    \node[main node] at (3, 1) (c) {$c$};
    \node[main node] at (4.5, 1) (d) {$d$};

    \draw (a3) -- (b4) -- (c4) -- (d3) -- (c3) -- (b3) -- (a1) -- (b1) -- (c1) -- (d1);
    \draw (c3) -- (b2) -- (a2) -- (b2) -- (c2) -- (d2);
    \draw (a) -- (b) -- (c) -- (d);

    \draw [->,line width=0.3mm] (2.2,2.8) -- (2.2,1.8) node[pos=0.5,right]{$f$};
  \end{tikzpicture}
  \caption{The map $f \from G \to H$.}
\end{subfigure}
\vspace{3em}

\begin{subfigure}{0.3\textwidth}
  \centering
    \begin{tikzpicture}[scale=0.9,thick,main
      node/.style={circle,fill=blue!10,draw,font=\sffamily\bfseries,minimum
        size=0.6cm,inner sep=0pt}]

    \node[main node] at (0, 6.5) (a3) {$a_3$};
    \node[main node] at (1.5, 6.5) (b4) {$b_4$};
    \node[main node] at (3, 6.5) (c4) {$c_4$};
    \node[main node] at (4.5, 6.5) (d3) {$d_3$};

    \node[main node] at (1.5, 5.5) (b3) {$b_3$};
    \node[main node] at (3, 5.5) (c3) {$c_3$};

    \node[main node] at (0, 4.5) (a2) {$a_2$};
    \node[main node] at (1.5, 4.5) (b2) {$b_2$};
    \node[main node] at (3, 4.5) (c2) {$c_2$};
    \node[main node] at (4.5, 4.5) (d2) {$d_2$};

    \node[main node] at (0, 3.5) (a1) {$a_1$};
    \node[main node] at (1.5, 3.5) (b1) {$b_1$};
    \node[main node] at (3, 3.5) (c1) {$c_1$};
    \node[main node] at (4.5, 3.5) (d1) {$d_1$};

    \draw (a3) -- (b4) -- (c4) -- (d3) -- (c3) -- (b3) -- (a1) -- (b1) -- (c1) -- (d1);
    \draw (c3) -- (b2) -- (a2) -- (b2) -- (c2) -- (d2);
    \draw[->,color=color0,line width=0.8mm] (a3) -- (b4) -- (c4) -- (d3) -- (c3) -- (b3) -- (a1);

    \draw[color=color2,line width=0.8mm] (a2.30) -- (b2.150);
    \draw[color=color2,line width=0.8mm] (b2.30) -- (c2.150);
    \draw[color=color2,line width=0.8mm] (c2.30) -- (d2.150);
    \draw[color=color2,line width=0.8mm] (d2.210) -- (c2.-30);
    \draw[color=color2,line width=0.8mm] (c2.210) -- (b2.-30);
    \draw[->,color=color2,line width=0.8mm] (b2.210) -- (a2.-30);
  \end{tikzpicture}
  \caption{The first part.}
\end{subfigure}
\hspace{1em}
\begin{subfigure}{0.3\textwidth}
  \begin{tikzpicture}[scale=0.9,thick,main
    node/.style={circle,fill=blue!10,draw,font=\sffamily\bfseries,minimum size=0.6cm,inner sep=0pt}]

    \node[main node] at (0, 6.5) (a3) {$a_3$};
    \node[main node] at (1.5, 6.5) (b4) {$b_4$};
    \node[main node] at (3, 6.5) (c4) {$c_4$};
    \node[main node] at (4.5, 6.5) (d3) {$d_3$};

    \node[main node] at (1.5, 5.5) (b3) {$b_3$};
    \node[main node] at (3, 5.5) (c3) {$c_3$};

    \node[main node] at (0, 4.5) (a2) {$a_2$};
    \node[main node] at (1.5, 4.5) (b2) {$b_2$};
    \node[main node] at (3, 4.5) (c2) {$c_2$};
    \node[main node] at (4.5, 4.5) (d2) {$d_2$};

    \node[main node] at (0, 3.5) (a1) {$a_1$};
    \node[main node] at (1.5, 3.5) (b1) {$b_1$};
    \node[main node] at (3, 3.5) (c1) {$c_1$};
    \node[main node] at (4.5, 3.5) (d1) {$d_1$};

    \draw (a3) -- (b4) -- (c4) -- (d3) -- (c3) -- (b3) -- (a1) -- (b1) -- (c1) -- (d1);
    \draw (c3) -- (b2) -- (a2) -- (b2) -- (c2) -- (d2);

    \draw[->,color=color2,line width=0.8mm] (a2) -- (b2) -- (c3) -- (d3) -- (c4) -- (b4) -- (a3);

    \draw[color=color0,line width=0.8mm] (a1.30) -- (b1.150);
    \draw[color=color0,line width=0.8mm] (b1.30) -- (c1.150);
    \draw[color=color0,line width=0.8mm] (c1.30) -- (d1.150);
    \draw[color=color0,line width=0.8mm] (d1.210) -- (c1.-30);
    \draw[color=color0,line width=0.8mm] (c1.210) -- (b1.-30);
    \draw[->,color=color0,line width=0.8mm] (b1.210) -- (a1.-30);
  \end{tikzpicture}
  \caption{The middle part.}
\end{subfigure}
\hspace{1em}
\begin{subfigure}{0.3\textwidth}
  \begin{tikzpicture}[scale=0.9,thick,main
    node/.style={circle,fill=blue!10,draw,font=\sffamily\bfseries,minimum size=0.6cm,inner sep=0pt}]

    \node[main node] at (0, 6.5) (a3) {$a_3$};
    \node[main node] at (1.5, 6.5) (b4) {$b_4$};
    \node[main node] at (3, 6.5) (c4) {$c_4$};
    \node[main node] at (4.5, 6.5) (d3) {$d_3$};

    \node[main node] at (1.5, 5.5) (b3) {$b_3$};
    \node[main node] at (3, 5.5) (c3) {$c_3$};

    \node[main node] at (0, 4.5) (a2) {$a_2$};
    \node[main node] at (1.5, 4.5) (b2) {$b_2$};
    \node[main node] at (3, 4.5) (c2) {$c_2$};
    \node[main node] at (4.5, 4.5) (d2) {$d_2$};

    \node[main node] at (0, 3.5) (a1) {$a_1$};
    \node[main node] at (1.5, 3.5) (b1) {$b_1$};
    \node[main node] at (3, 3.5) (c1) {$c_1$};
    \node[main node] at (4.5, 3.5) (d1) {$d_1$};

    \draw (a3) -- (b4) -- (c4) -- (d3) -- (c3) -- (b3) -- (a1) -- (b1) -- (c1) -- (d1);
    \draw (c3) -- (b2) -- (a2) -- (b2) -- (c2) -- (d2);

    \draw[->,color=color0,line width=0.8mm] (a1) -- (b3) -- (c3) -- (b2) -- (a2);

    \draw[color=color2,line width=0.8mm] (a3.30) -- (b4.150);
    \draw[color=color2,line width=0.8mm] (b4.30) -- (c4.150);
    \draw[color=color2,line width=0.8mm] (c4.210) -- (b4.-30);
    \draw[->,color=color2,line width=0.8mm] (b4.210) -- (a3.-30);
  \end{tikzpicture}
  \caption{The end part.}
\end{subfigure}
  \caption{Siek\l{}ucki's example and the 2-obstructor for it shown as a pair of paths in $G$ (in three stages).}\label{fig:siek}
\end{figure}

\begin{example}\label{ex:siek}
  This example, along with the description of a 2-obstructor, is presented in~\cite[\S
  3.1]{akh_rep_skop}. The map $f \from G \to H$ first appeared in the proof of Theorem~2.1 in
  \cite{siek}.

  Consider the graphs $G$ and $H$ and the simplicial map $f \from G \to H$ as in~\cref{fig:siek}. As
  shown in the figure, there is a 2-obstructor for $f$ given by the path in $G^{(2)}_f$ defined by
  the two paths in $G$:
  \[
    \begin{aligned}
      \gamma_1 \from a_3 &\to b_4 \to c_4 \to d_3 \to c_3 \to b_3 \to a_1 \to b_1 \to c_1 \to d_1 \to
      c_1 \to \\ &\to b_1 \to a_1 \to b_3 \to c_3 \to b_2 \to a_2 \\
      \gamma_2 \from a_2 &\to b_2 \to c_2 \to d_2 \to c_2 \to b_2 \to a_2 \to b_2 \to c_3 \to d_3
      \to c_4 \to \\ &\to b_4 \to a_3 \to b_4 \to c_4 \to b_4 \to a_3
    \end{aligned}
  \]

  Therefore, $f$ does not lift to an embedding.
\end{example}

\cref{thm:obstructors} provides only the necessary conditions for the existence of a lifting
(see~\cref{ex:giller} and~\cref{ex:nontrivial_gamma}). The next two theorems, however, provide
conditions that are both necessary and sufficient.

\begin{definition}[admissible collection of linear orders]
  Let $f \from K \to L$ be a non-degenerate simplicial map between finite simplicial complexes. For
  each simplex $C \in L$, let $K_C$ be the set of simplices $A \in K$ that map to $C$ under
  $f$. Note that since $f$ is non-degenerate, all the simplices in $K_C$ have the same dimension as
  $C$. Furthermore, let $V(L)$ be the set of vertices of $L$.

  We call a collection $\{ (\prec_v, K_v) \}_{v \in V(L)}$ of linear orders on the sets
  $K_v = f^{-1}(v)$ \emph{admissible}\index{admissible collection of linear orders} if it induces
  linear orders on all the sets $K_C,\ C \in L$.

  Specifically, given a pair of simplices $A, B \in K_C$, let $V_{f}(A, B) \subset V(A) \times V(B)$
  represent all pairs of vertices $(v, w)$ satisfying $f(v) = f(w)$. Then it is required that either
  $v \preceq_{f(v)} w$ holds for all $(v, w) \in V_{f}(A, B)$, or $v \succeq_{f(v)} w$ holds for all
  $(v, w) \in V_{f}(A, B)$.
\end{definition}

\begin{theorem}\label{thm:orders}
  Let $f \from K \to L$ be a non-degenerate simplicial map between finite simplicial complexes. Then
  the piecewise linear map $|f| \from |K| \to |L|$ lifts to an embedding if and only if there exists an
  admissible collection of linear orders on the sets $K_v,\ v \in V(L)$.

  Furthermore, there exists a bijection between the admissible collections of linear orders and the
  isotopy classes of liftings.
\end{theorem}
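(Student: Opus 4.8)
The plan is to prove both directions by translating between the height function $h$ of a lifting and a system of linear orders on the fibres, with all compatibility information localised to pairs of simplices mapping to a common simplex of $L$.

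\textbf{From a lifting to an admissible collection.} Suppose $\wt{|f|}\from |K|\to |L|\times\R$ is an embedding with $|f|=\pr_{|L|}\comp\wt{|f|}$, and set $h=\pr_{\R}\comp\wt{|f|}\from|K|\to\R$. For each vertex $v\in V(L)$ the set $K_v=f^{-1}(v)$ consists of finitely many vertices of $K$, and because $\wt{|f|}$ is injective the real numbers $h(w)$, $w\in K_v$, are pairwise distinct; declare $w\prec_v w'$ iff $h(w)<h(w')$. I claim $\{(\prec_v,K_v)\}_{v\in V(L)}$ is admissible. Take simplices $A,B\in K_C$ for some simplex $C\in L$ and a pair $(v,w),(v',w')\in V_f(A,B)$; I must rule out $h(v)<h(v')$ while $h(w)>h(w')$. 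Restricting $\wt{|f|}$ to $|A|$ and $|B|$ — both mapped homeomorphically by $|f|$ onto $|C|$ — gives two graphs of functions $\alpha,\beta\from|C|\to\R$ over $|C|$, and the linear (affine-on-simplices) structure makes $\alpha-\beta$ affine on $|C|$; if it changed sign, the two graphs would cross, contradicting injectivity of $\wt{|f|}$. Hence $\alpha-\beta$ has constant sign, which is exactly the alternative required in the definition. The isotopy class of $\wt{|f|}$ determines, and is determined up to the relevant data by, the sign pattern of these $h$-values, so this assignment descends to a well-defined map from isotopy classes of liftings to admissible collections.

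\textbf{From an admissible collection to a lifting.} Conversely, given an admissible collection, I build $h$ by first choosing, for each vertex $v\in V(L)$, an order-preserving injection $K_v\hookrightarrow\R$, i.e.\ assigning real heights to the finitely many vertices of $K$ over $v$; then extend $h$ affinely over each simplex of $K$. This defines a PL map $\wt{|f|}=(|f|,h)\from|K|\to|L|\times\R$; it remains to check it is an embedding, and here I will likely need to pass to a subdivision or choose the heights generically. Injectivity is checked fibrewise over $|L|$: a point $q\in|L|$ lies in the interior of a unique simplex $C\in L$, its preimage under $|f|$ is a disjoint union of open simplices, one in the interior of each $A\in K_C$, and the height of the point of $|A|\cap|f|^{-1}(q)$ is the affine interpolation of the heights of $V(A)$ using barycentric coordinates of $q$ in $C$. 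Admissibility says that for $A\neq B$ the two affine functions $|C|\to\R$ (heights over $|C|$ along $A$ resp.\ $B$) have the property that at every vertex of $C$ the $A$-value and $B$-value are ordered consistently; since affine functions that are ordered at all vertices of a simplex are ordered throughout, the two functions never agree on $\Int|C|$, so the preimages stay disjoint. Finally one checks that on a single simplex $|A|$ the map $(|f|\big|_{|A|},h\big|_{|A|})$ is injective because $|f|\big|_{|A|}$ already is (non-degeneracy), giving an embedding on closed simplices and hence, together with the fibrewise disjointness, a global PL embedding.

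\textbf{The bijection and the main obstacle.} To get the stated bijection I argue the two constructions are mutually inverse up to isotopy: starting from $\wt{|f|}$, reading off orders, and rebuilding a lifting produces a PL map with the same fibrewise height order as $\wt{|f|}$, and a straight-line homotopy through the family of liftings with a fixed admissible collection stays an embedding (the disjointness argument above is convex in the heights), so the rebuilt lifting is isotopic to the original; conversely the orders read off from a rebuilt lifting are the ones we started with. I expect the main obstacle to be the "only if" direction's global injectivity bookkeeping: admissibility is a condition only about pairs of simplices with a common image, so I must argue carefully that these pairwise, simplexwise disjointness statements assemble into genuine injectivity of $\wt{|f|}$ on all of $|K|$ — in particular handling points lying on shared faces, where several simplices of $K$ meet — and that the resulting map is not merely injective but a PL embedding onto its image; checking properness/closedness is automatic by compactness, so the real care is in the combinatorial gluing and, if needed, the choice of subdivision making $\wt{|f|}$ simplicial.
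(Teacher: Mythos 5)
Your proposal is correct and takes essentially the same approach as the paper's proof: orders are read off from the height function $h=\pr_\R\comp\wt{|f|}$, admissibility follows from the sign-constancy of an affine difference of heights over a common image simplex, the converse extends an order isomorphism $K_v\to\{1,\dots,|K_v|\}$ affinely and checks injectivity via barycentric coordinates, and the bijection comes from the straight-line isotopy between heights. The only point to tighten is that "ordered at all vertices hence ordered throughout" needs the observation that at least one vertex inequality is strict when $A\neq B$ (so the two affine functions disagree on the open simplex), which the paper states explicitly.
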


The reader may refer to~\cite[Proposition 4]{giller},~\cite[Lemme 1.4]{poen_fr}, and~\cite[Theorem
5]{mel2} for variants of the stated theorem.

\begin{proof}
  Let us first, by using a lifting of $|f|$, construct an admissible collection of linear
  orders. Assume there is a lifting $\wt{|f|} \from |K| \to |L| \times \R$. It defines a collection
  of linear orders as follows: for vertices $v, w \in V(K)$ such that $f(v) = f(w)$, we define
  $v \prec_f w$ if $h(v) < h(w)$, where $h = \pr_\R \circ \wt{|f|}$. Let us prove that this
  collection is admissible.

  Suppose it is not. It implies that there exist simplices $A, B \in K$ with the same image $f(A) =
  f(B)$ and two pairs of vertices $f(v) = f(w), f(v') = f(w')$, where $v, v' \in A$, $w, w' \in B$,
  such that $v \prec_f w$ but $v' \succ_f w'$.

  Clearly, we have $f(\{v, v'\}) = f(\{w, w'\}) = \{f(v), f(w)\}$ where $\{a, b\}$ denotes an edge
  with endpoints $a$ and $b$. Let $d \from [0, 1] \to \R$ be the map defined as
  $d(t) = h((1-t) v + t v') - h((1-t) w + t w')$. Since $v \prec_f w$ and $v' \succ_f w'$, we have
  $d(0) = h(v) - h(w) < 0$ and $d(1) > 0$. Therefore, there exists $t' \in [0, 1]$ such that
  $d(t') = 0$. However, since $|f|$ is linear on the simplices of $K$, this implies that
  $\wt{|f|}((1-t')v + t'v') = \wt{|f|}((1-t')w + t'w')$, which contradicts the fact that $\wt{|f|}$
  is an embedding.

  Now suppose we have an admissible collection of linear orders $\{ (\prec_v, K_v) \}_{v \in
    V(L)}$. Let us construct a lifting $\wt{|f|}$ of $|f|$ based on this collection. First, we define a map
  $h \from V(K) \to \N$ as follows: for each $K_v$, we let $h\big|_{K_v}$ be the order isomorphism
  $K_v \to \{1, \dots, |K_v|\}$ that takes the $k$-th vertex in $K_v$ (in ascending order with respect
  to $\prec_v$) to $k$. By extending the map $h$ linearly to the entire $|K|$, we obtain the map
  $h \from |K| \to \R$. Take $\wt{|f|} = |f| \times h$. Now, let us prove that $\wt{|f|}$ is an embedding.

  Suppose it is not. Then there exist points $v, w \in |K|$ such that $\wt{|f|}(v) =
  \wt{|f|}(w)$. Since $v$ and $w$ cannot be vertices of $K$ by the definition of $h$, they lie
  in the interior of some simplices $A$ and $B$ of $K$, respectively, with
  $\dim A = \dim B > 0$.\footnote{In this context, we define the interior of a simplex $\Delta$ as
    $|\Delta| \setminus \bigcup_{A} |A|$, where $A$ ranges over all proper faces of $\Delta$.} The last
  equality follows from the fact that $f$ is non-degenerate and hence injective on simplices, so
  $f(A) = f(B)$ and the simplices $A$, $B$, and $f(A)$ have the same dimension. Let
  $a_0, \dots, a_n$ denote the vertices of $A$, and $b_0, \dots, b_n$ denote the corresponding
  vertices of $B$, such that $f(a_i) = f(b_i)$ for each $i$.

  Now, let $v = \sum_{i=0}^n \alpha_i a_i$ where the $\alpha_i$'s are the barycentric coordinates of $v$ in
  $A$. Since $|f|(v) = |f|(w)$, we have $w = \sum_{i=0}^n \alpha_i b_i$. Therefore, we have $h(v) = \sum_{i=0}^n
  \alpha_i h(a_i) = \sum_{i=0}^n \alpha_i h(b_i) = h(w)$.

  Because $h$ was defined using the admissible collection of linear orders, we have either
  $h(a_i) \leq h(b_i)$ or $h(a_i) \geq h(b_i)$ for all $i$. Moreover, since $A \neq B$, at least one
  of the inequalities is strict. This implies that either $h(v) < h(w)$ or $h(v) > h(w)$, which
  contradicts the assumption that $\wt{|f|}(v) = \wt{|f|}(w)$, where $\wt{|f|} = |f| \times h$.

  To establish the bijection between admissible collections of linear orders and isotopy classes of
  liftings, we observe that each lifting $\wt{|f|}$ is isotopic to the lifting
  $\wt{|f|}\vphantom{f}'$ constructed in the previous step using the admissible collection of linear
  orders induced by $\wt{|f|}$. The isotopy $F \from |K| \times [0, 1] \to |L| \times \R$ is defined
  as $F(x, t) = \left(|f|(x), (1-t)h(x) + t h'(x)\right)$ where $h = \pr_\R \wt{|f|}$ and
  $h' = \pr_\R \wt{|f|'}$. The proof that this is an actual isotopy is left to the reader.
\end{proof}

The next theorem demonstrates that the problem of determining the existence of a lifting for a
piecewise linear non-degenerate map between polyhedra can be reduced to the case of graphs.

\begin{theorem}\label{thm:lifting_sk}
  Let $f \from K \to L$ be a non-degenerate simplicial map. Then the corresponding piecewise linear
  map $|f| \from |K| \to |L|$ lifts to an embedding if and only if
  $\left|f \big|_{\sk_1 K}\right| \from |\sk_1 K| \to |\sk_1 L|$ lifts to an embedding. Here, $\sk_1 K$ and
  $\sk_1 L$ denote the one-dimensional skeletons of $K$ and $L$, respectively. Moreover, there is a
  bijection between the isotopy classes of liftings of $|f|$ and of $\left|f \big|_{|\sk_1 K|}\right|$.
\end{theorem}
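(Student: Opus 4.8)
The plan is to reduce everything to \cref{thm:orders}. The crucial observation is that passing to $1$-skeletons does not change the vertex sets: $V(\sk_1 K) = V(K)$, $V(\sk_1 L) = V(L)$, and $f$ and $f\big|_{\sk_1 K}$ agree on vertices. Hence for every $v \in V(L)$ the sets $f^{-1}(v)$ and $\left(f\big|_{\sk_1 K}\right)^{-1}(v)$ coincide, so the phrase ``a collection of linear orders on the sets $f^{-1}(v)$, $v\in V(L)$'' denotes literally the same object for $f$ and for $f\big|_{\sk_1 K}$. Note also that $f\big|_{\sk_1 K}$ is again non-degenerate (injectivity on each simplex of $K$ restricts to injectivity on each edge and vertex), so \cref{thm:orders} applies to it.

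The heart of the matter is the claim that a collection $\{(\prec_v, K_v)\}_{v\in V(L)}$ of linear orders is admissible for $f$ if and only if it is admissible for $f\big|_{\sk_1 K}$. One direction is immediate: since $f$ is non-degenerate, a simplex of $K$ mapping to a simplex $C' \in \sk_1 L$ automatically lies in $\sk_1 K$, so the admissibility conditions for $f\big|_{\sk_1 K}$ are exactly the subfamily of the admissibility conditions for $f$ indexed by the simplices $C \in L$ with $\dim C \le 1$; thus admissibility for $f$ implies admissibility for $f\big|_{\sk_1 K}$. For the converse, assume admissibility for $f\big|_{\sk_1 K}$ and take $C \in L$ with $A, B \in K_C$; we must check the consistency condition on $V_f(A,B)$, and we may assume $\dim C \ge 1$. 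As $f$ is injective on $A$ and on $B$, for each $u \in V(C)$ there are unique vertices $a_u \in V(A)$, $b_u \in V(B)$ with $f(a_u)=f(b_u)=u$, and $V_f(A,B) = \{(a_u,b_u) : u \in V(C)\}$. If $a_u = b_u$ for all $u$ there is nothing to prove; otherwise fix $u_0$ with, say, $a_{u_0} \prec_{u_0} b_{u_0}$. For any other $u \in V(C)$, the edge $\{a_{u_0}, a_u\}$ is a face of $A$, the edge $\{b_{u_0}, b_u\}$ is a face of $B$, and both map to the edge $\{u_0, u\} \in \sk_1 L$; admissibility for $f\big|_{\sk_1 K}$ applied to this pair of edges, combined with $a_{u_0} \prec_{u_0} b_{u_0}$, forces $a_u \preceq_u b_u$. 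Hence $a_u \preceq_u b_u$ for every $u \in V(C)$, which is the consistency condition for $(A,B)$. This proves the claim.

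With the claim established, the theorem follows formally. By \cref{thm:orders}, $|f|$ lifts to an embedding iff there is an admissible collection of linear orders for $f$, and $\left|f\big|_{\sk_1 K}\right|$ lifts to an embedding iff there is an admissible collection for $f\big|_{\sk_1 K}$; by the claim these two families of admissible collections are identical, so the two liftability statements are equivalent. Moreover, \cref{thm:orders} furnishes a bijection between the admissible collections for $f$ and the isotopy classes of liftings of $|f|$, and likewise for $f\big|_{\sk_1 K}$; since the underlying sets of admissible collections coincide, composing the first bijection with the inverse of the second gives the asserted bijection between isotopy classes of liftings of $|f|$ and of $\left|f\big|_{\sk_1 K}\right|$. (Alternatively, the ``only if'' half has a direct geometric proof: the restriction of a lifting $\wt{|f|}$ to $|\sk_1 K|$ already maps into $|\sk_1 L|\times\R$ and is again an embedding; but this is subsumed by the argument above.)

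The only non-routine step is the converse direction of the claim, and the point requiring care there is the bookkeeping of the vertex correspondences $u \mapsto a_u$ and $u \mapsto b_u$ supplied by non-degeneracy, together with the observation that each pair of corresponding edge-faces of $A$ and $B$ is a bona fide pair of simplices in $K_{C'}$ for the corresponding edge $C'$ of $L$ — so that admissibility over $\sk_1 L$ genuinely controls the higher-dimensional conditions. Everything else is a direct appeal to \cref{thm:orders}.
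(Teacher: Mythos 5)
Your proposal is correct and follows the same route as the paper: both reduce the statement to \cref{thm:orders} by showing that a collection of linear orders on the sets $f^{-1}(v)$ is admissible for $f$ if and only if it is admissible for $f\big|_{\sk_1 K}$, the key point in the nontrivial direction being that the consistency condition for a pair of simplices $A,B\in K_C$ is controlled by the pairs of corresponding edge-faces of $A$ and $B$. Your write-up of that direction is a direct argument (propagating the order from a fixed pair $(a_{u_0},b_{u_0})$ along edges) where the paper argues by contradiction, but the underlying idea is identical.
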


\begin{proof}
  Based on~\cref{thm:orders}, it suffices to show that a collection
  ${\{(\prec_v, K_v)\}_{v \in V(K)}}$ of linear orders is admissible with respect to $f$ if and only
  if it is admissible with respect to $f \big|_{\sk_1 K}$.

  It is clear that once it is admissible with respect to $f$, it is admissible with respect to $f
  \big|_{\sk_1 K}$ as well, according to the definition.

  Suppose a collection is admissible with respect to $f \big|_{\sk_1 K}$, but not admissible with respect to
  $f$. This implies that there exist simplices $A, B \in K_C$ and two pairs of vertices
  $f(v) = f(w), f(v') = f(w')$, where $v, v' \in A$, $w, w' \in B$, such that $v \prec_{f(v)} w$ but
  $v' \succ_{f(v')} w'$. The edges $\{v, v'\}, \{w, w'\}$ lie in $\sk_1 K$ and have the same image, but their
  endpoints are in the opposite order. This leads to a contradiction.
\end{proof}

Note that, while studying the existence of a lifting of a piecewise linear map $f$, it suffices to
focus attention only on the subpolyhedron of the preimages of the multiple points of $f$. In view
of~\cref{thm:lifting_sk}, after setting $f'$ to be a restriction of $f$ on the set
$\{ x \in \sk_1 K\ |\ |f^{-1}(f(x))| > 1 \} \subseteq \sk_{1} K$, we have that $f$ lifts if and only
if $f'$ lifts.

For instance, in the case of generic immersions of surfaces into $\R^{3}$, a similar observation
allows us to narrow down the question of liftability of an immersion to the liftability of its
restriction on the preimage of the set of multiple points, see~\cite[Theorem
3.2]{carter_saito_paper}. Note that this restriction is, in fact, a map between graphs.

The next theorem demonstrates that the liftability of a non-degenerate piecewise linear map
$|f| \from |K| \to |L|$ corresponding to a simplicial map $f \from K \to L$, can be encoded as the
satisfiability of a 3-CNF boolean formula, which we will define below.

The key idea is as follows: assuming the covering map $p_2 \from |K^{(2)}_f| \to |\wt{K}^{(2)}|$ is
trivial, each trivialization of it yields two mutually inverse collections of binary relations on
the preimages of the vertices of $L$. Specifically, by selecting a section
$s \from |\wt{K}^{(2)}| \to |K^{(2)}_f|$ of $p_2$, we define the relation as $x\,R\,y$ when $(x, y)$
lies in $s(|\wt{K}^{(2)}|)$. This relation is antisymmetric and irreflexive; therefore, to be a
(strict) order, it only needs to satisfy the transitivity property. Transitivity, in turn, can be
represented by a set of conditions of the form $x\,R\,y \land y\,R\,z \to x\,R\,z$.\footnote{An
  alternative approach to expressing transitivity in terms of the covering maps $p_{k}$'s is to
  find a pair of ``compatible'' trivializations for $p_2 \from |K^{(2)}_f| \to |\wt{K}^{(2)}|$ and
  $p_3 \from |K^{(3)}_f| \to |\wt{K}^{(3)}|$, refer to~\cite[Theorem 5]{mel2} for further details.}

More precisely, let us define a boolean formula $\Gamma_f$. Denote by $\CC$ the set of connected
components of $|K^{(2)}_f|$. The involution $\tau$ on $|K^{(2)}_f|$ induces an involution on
$\CC$. Let us assume that the covering map $p_2$ is trivial, therefore, the latter involution is
fixed-point free.

Let $O_1, O_2, \dots, O_n$ be the orbits of the involution on $\CC$. For each orbit $O_i$, pick
an element $C_i \in O_i$ in the orbit, so $O_i = \{ C_i, \tau(C_i) \}$, and associate a boolean variable $x_i$
with $C_i$. Next, we associate the literal $\neg x_i$ with each $\tau(C_i)$. As a result, each
connected component $C$ in $\CC$ is associated with either a variable or its negation.
We refer to the literal corresponding to $C$ as $\alpha_C$.

Let $T$ be the set of ordered triples $(C, D, E)$ of connected components, with $C \neq E$ and
$D \neq E$, such that there exist three vertices $a, b, c \in V(K)$ satisfying $(a, b) \in C$,
$(b, c) \in D$, $(a, c) \in E$. Let $\Gamma_f$ be the boolean function
\[
  \Gamma_f(x_1, \dots, x_n) = \bigwedge_{(C, D, E) \in T} ((\alpha_C \land \alpha_D) \to \alpha_E)
                          = \bigwedge_{(C, D, E) \in T} (\neg \alpha_C \lor \neg \alpha_D \lor \alpha_E)
\]

Note that, aside from reordering the clauses, literals, and renaming variables, the only source of
variability in the form of $\Gamma_f$ arises from the initial choice of representatives for the
orbits $O_{i}$. However, it is easy to check that once $\Gamma_f$ contains a clause
$\neg \alpha_C \lor \neg \alpha_D \lor \alpha_E$, then it must also contain the clause
$\alpha_C \lor \alpha_D \lor \neg \alpha_E$.\footnote{After double negation elimination, that is
  $\neg \neg x = x$.} This is because, once we have $(a, b) \in C$, $(b, c) \in D$, and
$(a, c) \in E$, we have $(c, b) \in \tau(D)$, $(b, a) \in \tau(C)$, and $(c, a) \in
\tau(E)$.

Thus, the form of $\Gamma_{f}$ does not actually depend on the choice of representatives, and,
therefore, $\Gamma_f$ is uniquely defined up to reordering the clauses and the literals within
clauses, and renaming the variables.

\begin{theorem}\label{thm:gamma_cond}
  Let $f \from K \to L$ be a non-degenerate simplicial map. Then the corresponding piecewise linear map
  $|f| \from |K| \to |L|$ lifts to an embedding if and only if the following conditions hold:
  \begin{enumerate}
  \item\label{thm:gamma_cond:p2_trivial} the covering map $p_2 \from |K^{(2)}_f| \to |\wt{K}^{(2)}_f|$ is trivial,
  \item\label{thm:gamma_cond:gamma} $\Gamma_f$ is satisfiable.
  \end{enumerate}

  Furthermore, there exists a bijection between the assignments that satisfy $\Gamma_f$ and the isotopy classes of liftings.
\end{theorem}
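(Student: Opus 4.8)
The plan is to run everything through \cref{thm:orders}, which already identifies isotopy classes of liftings of $|f|$ with admissible collections of linear orders on the sets $K_v = f^{-1}(v)$, and then to build a dictionary between such collections and the satisfying assignments of $\Gamma_f$. Since \cref{thm:obstructors:first} gives the necessity of condition~\cref{thm:gamma_cond:p2_trivial} (if $|f|$ lifts then $p_2$ is trivial), I may assume throughout that $p_2$ is trivial, so that the involution $\tau$ acts on $\CC$ without fixed points, its orbits are the pairs $O_i = \{C_i, \tau(C_i)\}$, and every connected component $C \in \CC$ carries a literal $\alpha_C$ exactly as in the definition of $\Gamma_f$. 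It then suffices to prove, under this assumption, that $|f|$ lifts if and only if $\Gamma_f$ is satisfiable, together with the claimed bijection.

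The central step is a bijection between admissible collections of linear orders and \emph{transversals} of the $\tau$-action on $\CC$, by which I mean subsets $R$ of the vertex set of $K^{(2)}_f$ that are unions of connected components and meet each pair $\{(x,y),(y,x)\}$ in exactly one point (such an $R$ is $s(|\wt{K}^{(2)}_f|)$ for a section $s$ of $p_2$). Given an admissible collection $\{(\prec_v, K_v)\}$, set $(x,y) \in R$ iff $x \prec_{f(x)} y$; totality, irreflexivity and antisymmetry of the $\prec_v$ show $R$ meets each pair in one point, and an edge $\{(a,b),(c,d)\}$ of $K^{(2)}_f$ corresponds to a pair of edges $A = \{a,c\}$, $B = \{b,d\}$ of $K$ with $f(A) = f(B)$, $f(a) = f(b)$, $f(c) = f(d)$ and $a,b,c,d$ pairwise distinct, so here $V_f(A,B) = \{(a,b),(c,d)\}$ and admissibility forces $a \prec_{f(a)} b \iff c \prec_{f(c)} d$, i.e.\ $(a,b) \in R \iff (c,d) \in R$; hence $R$ is closed under adjacency, so a union of components. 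Conversely a transversal $R$ defines total, irreflexive, antisymmetric relations $\rho_v$ on $K_v$ by $x\,\rho_v\,y \iff (x,y) \in R$; using $\tau$-equivariance of the component structure (to pass between $(a,b) \in R$ and $(b,a) \in \tau(R)$) and the same edge analysis — with the degenerate case where $A,B$ share a vertex treated separately, where the dichotomy is vacuous and no edge of $K^{(2)}_f$ occurs — the collection $\{(\rho_v,K_v)\}$ satisfies the dichotomy in the definition of admissibility on every edge of $L$. By \cref{thm:lifting_sk} (its proof reduces admissibility to the $1$-skeleton), $\{(\rho_v,K_v)\}$ is admissible if and only if each $\rho_v$ is transitive, i.e.\ iff $R$ is \emph{transitive}: whenever $f(a)=f(b)=f(c)$ with $a,b,c$ distinct, $(a,b),(b,c) \in R$ implies $(a,c) \in R$. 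The two constructions are mutually inverse, so admissible collections correspond bijectively to transitive transversals.

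Next I would identify transversals with Boolean assignments: a transversal is $\bigcup_i D_i$ for a choice $D_i \in O_i$, and setting $x_i$ true exactly when $D_i = C_i$ gives a bijection under which ``$\alpha_C$ is true'' becomes ``$C \subseteq R$'' for every $C \in \CC$. Finally one checks that a transversal $R$ is transitive iff the corresponding assignment satisfies $\Gamma_f$: writing $C,D,E$ for the components of $(a,b),(b,c),(a,c)$, a triple with $C = E$ or $D = E$ imposes no condition because $R$ is a union of components (e.g.\ if $C=E$ then $(a,b)\in R \iff (a,c) \in R$), so transitivity need only be tested on triples with $C \neq E$ and $D \neq E$, and these are exactly the triples yielding elements $(C,D,E) \in T$; for such a triple the transitivity requirement reads $(\alpha_C \land \alpha_D) \to \alpha_E$. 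Hence transitivity of $R$ is equivalent to satisfaction of every clause of $\Gamma_f$. Composing the three bijections — isotopy classes of liftings $\leftrightarrow$ admissible collections ($\cref{thm:orders}$) $\leftrightarrow$ transitive transversals $\leftrightarrow$ satisfying assignments — gives the asserted bijection, and in particular $|f|$ lifts iff $\Gamma_f$ is satisfiable; combined with the necessity of \cref{thm:gamma_cond:p2_trivial} this is the theorem.

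The verifications that the $\rho_v$ obtained from a transversal are total and antisymmetric, that the two constructions are inverse, and the translation ``$\alpha_C$ true $\leftrightarrow C \subseteq R$'', are routine. The step that needs the most care is the equivalence ``$R$ is a union of connected components $\iff$ the induced relations satisfy the admissibility dichotomy'': one must correctly reduce to edges of $K$ via \cref{thm:lifting_sk}, handle the degenerate configurations in which two edges of $K$ with the same image share a vertex, and exploit $\tau$-equivariance. I expect this bookkeeping, rather than any conceptual difficulty, to be the main obstacle.
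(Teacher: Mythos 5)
Your proposal is correct and follows essentially the same route as the paper's proof: both reduce to \cref{thm:orders} and exploit the fact that a satisfying assignment (equivalently, a section of $p_2$, which you call a transversal) determines a relation that is constant on connected components of $K^{(2)}_f$, with the clauses of $\Gamma_f$ encoding exactly the non-automatic instances of transitivity and the edge structure of $K^{(2)}_f$ encoding the admissibility dichotomy. Your explicit factoring through transversals and the careful treatment of the degenerate shared-vertex case are a slightly more systematic packaging of the same argument, not a different method.
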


\begin{proof}
  Let us assume that there exists a lifting $\wt{|f|} \from |K| \to |L| \times \R$ of
  $|f|$. By~\cref{thm:obstructors}, the covering map $p_2$ is trivial. Moreover, as seen in the
  proof of~\cref{thm:obstructors}, we can obtain a continuous equivariant map $H$ from $|K^{(2)}_f|$
  to $S_2$ by composing $h \from |K^{(2)}_f| \to \Conf_2(\R)$ defined as $h((x, y)) = \left(\pr_\R
    \circ \wt{|f|}(x), \pr_\R \circ \wt{|f|}(y)\right)$, with the retraction $\Conf_2(\R) \to
  S_2$.

  Let us define the variable assignment by putting $x_i = 1$ if $H(C_{i}) = \id \in S_2$, and
  $x_{i} = 0$ otherwise. This assignment is well-defined because $H$ is constant on the connected
  components of $|K^{(2)}_f|$. Moreover, under this assignment, $\alpha_{C} = 1$ if and
  only if $H(C) = \id$.

  Let us consider an implication $(\alpha_C \land \alpha_D) \to \alpha_E$ in $\Gamma_f$. According
  to the definition of $\Gamma_f$, there exist pairs $(a, b) \in C$, $(b, c) \in D$, and
  $(a, c) \in E$. Once $\alpha_C = 1$ and $\alpha_D = 1$, we have $\alpha_E = 1$ by the transitivity
  of $\prec_{f(a)}$. Therefore, the defined assignment satisfies the disjunction, and consequently,
  it satisfies the entire formula $\Gamma_f$.

  Let us prove the reverse implication. Assume we have an assignment
  $\psi \from \{ x_{i} \}_{i=1}^{n} \to \{0, 1\}$ that satisfies $\Gamma_f$. We denote by
  $\psi(\alpha_C)$ the value of $\alpha_C$ under the assignment $\psi$. Now, we define a binary
  relation $R$ on $V(K^{(2)}_f)$ as follows: $a\, R\, b$ if $\psi(\alpha_C) = 1$ for the connected
  component $C$ containing the pair $(a, b)$, if it exists.

  It is easy to see that the relation $R$ is antisymmetric and irreflexive. Moreover, it is
  transitive. Indeed, suppose that pairs $(a, b)$ and $(b, c)$ lie in connected components $C$ and
  $D$, respectively, and $a\, R\, b$ and $b\, R\, c$, or, in other words,
  $\psi(\alpha_C) = \psi(\alpha_D) = 1$. Denote by $E$ the connected component containing the pair
  $(a, c)$. If $E = C$ or $E = D$, then $\psi(\alpha_E) = \psi(\alpha_C) = 1$ or
  $\psi(\alpha_E) = \psi(\alpha_D) = 1$, respectively, hence $a\, R\, c$. Otherwise, we have a disjunction
  $\neg \alpha_C \lor \neg \alpha_D \lor \alpha_E$ in $\Gamma_f$ confirming that $\psi(\alpha_E) = 1$ and
  hence $a\, R\, c$.  Thus, $R$ defines a collection of linear orders on the sets $K_v$ by putting
  $a \prec_{f(a)} b$ if $a\, R\, b$.

  Note that if $(a, b), (c, d) \in V(K^{(2)}_{f})$ lie in the same connected component of
  $K^{(2)}_{f}$, then, according to the definition of $R$, either $a \prec_{f(a)} b$ and
  $c \prec_{f(c)} d$, or $a \succ_{f(a)} b$ and $c \succ_{f(c)} d$.

  Next, let us prove that this collection is admissible. To do this, we need to show that, given
  simplices $A$ and $B$ with the same image, we have either $a \prec_{f(a)} b$ or $a \succ_{f(a)} b$
  simultaneously for all pairs of distinct vertices $(a, b) \in V(A) \times V(B)$ with
  $f(a) = f(b)$. However, all such pairs lie in the same connected component of $K^{(2)}_{f}$, which
  provides the desired result.

  Thus, we have constructed an admissible collection of linear orders on the sets $K_v$, and,
  by~\cref{thm:orders}, it defines a lifting of $f$.

  Note that, in the above proof, we established a bijective correspondence between the assignments
  satisfying $\Gamma_{f}$ and the collections of admissible orders. Therefore, by~\cref{thm:orders},
  this establishes a bijection between the assignments and the isotopy classes of liftings. This
  observation completes the proof.
\end{proof}

\begin{remark}
  Both~\cref{thm:orders} and~\cref{thm:gamma_cond} are proven under the assumption that $f$ is a
  simplicial map. However, it is easy to see that the statements of these theorems remain true if we
  replace simplicial complexes with multigraphs, and simplicial maps with homomorphisms of multigraphs.
\end{remark}

\begin{figure}
  \centering
  \includegraphics[width=.99\linewidth,height=15cm,keepaspectratio]{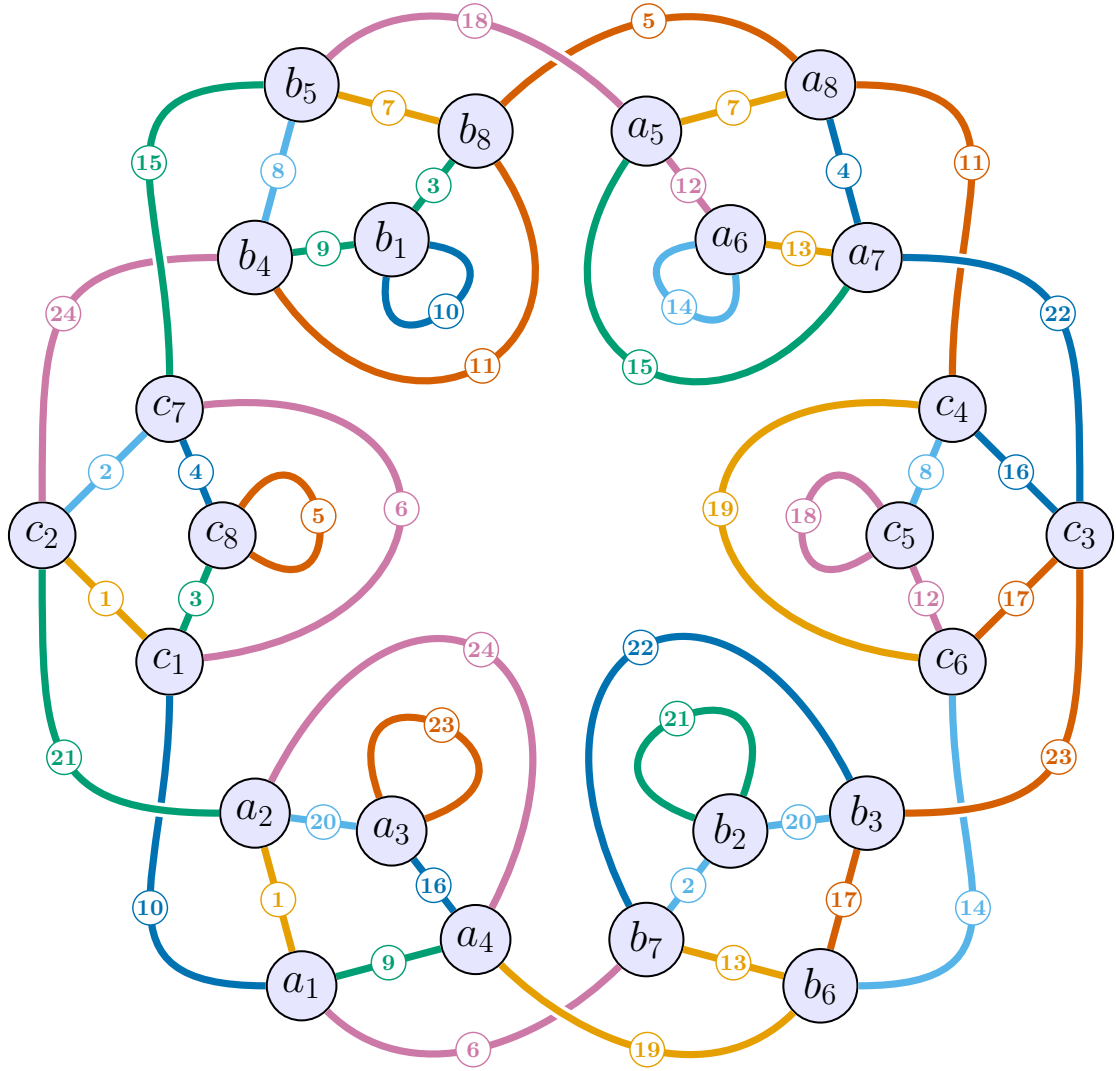}
  \caption{Giller's example: graph of multiple point preimages. Vertices with the same subscript map
    to the same vertex, and identically labelled edges have the same image. In other words,
    $f(x_i) = f(y_j)$ if and only if $i = j$, and $f(e) = f(g)$ if and only if $e$ and $g$ share the
    same label.}\label{fig:giller:main}
\end{figure}

\begin{figure}
  \centering
  \includegraphics[width=.99\linewidth,height=15cm,keepaspectratio]{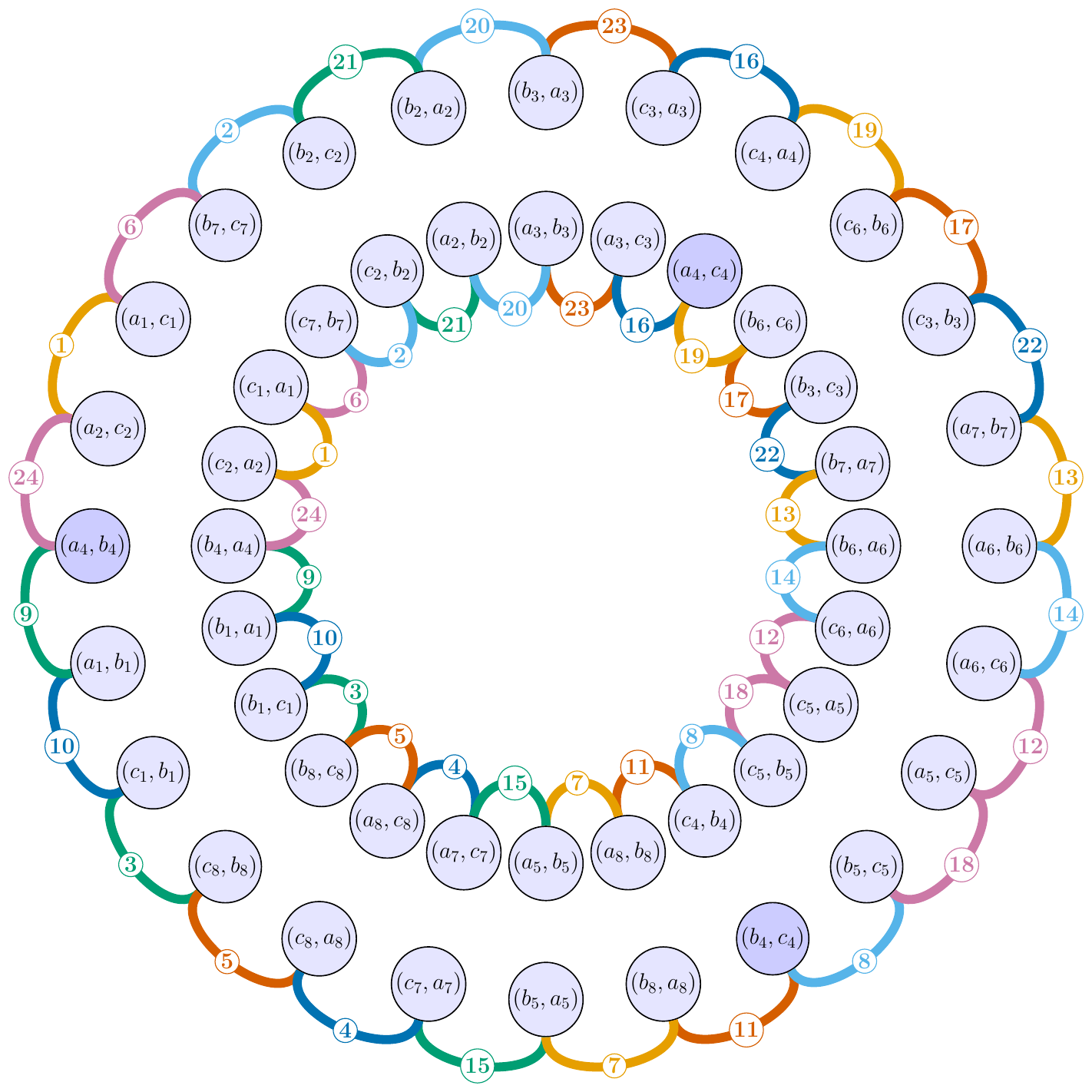}
  \caption{Giller's example: the graph $G^{(2)}_f$.}\label{fig:giller:double}
\end{figure}

\begin{example}[Giller's example, see~\cite{giller}]\label{ex:giller}
  The provided example, known as Giller's example, illustrates an immersion of the 2-sphere $S^2$ into
  3-dimensional Euclidean space $\mathbb{R}^3$ that cannot be lifted to an embedding. The example is
  discussed in the paper by Giller~\cite{giller}, and we refer the reader to it for details. We
  focus only on the graph of the multiple point preimages to understand its liftability. We denote
  by $f$ the restriction of the immersion on this graph.

  The multigraph of the multiple point preimages is shown in~\cref{fig:giller:main}.

  Additionally,~\cref{fig:giller:double} shows the corresponding graph $G^{(2)}_f$, which is the
  union of two circles. Note that the covering map $p_2$ is trivial in this case.

  However, one can see that the pairs $(a_4, b_4)$ and $(b_4, c_4)$ lie in the outer circle
  of~\cref{fig:giller:double}, while $(a_4, c_4)$ lies in the inner circle. As a result, $\Gamma_f$
  contains clauses $(x \lor x \lor x) \land (\neg x \lor \neg x \lor \neg x)$, where $x$ is a
  variable corresponding to the inner circle. Since this subformula is equivalent to
  $x \land \neg x$, the entire formula $\Gamma_f$ is not satisfiable. Therefore, both $f$ and the
  initial immersion do not lift to an embedding.
\end{example}

Let us now assume that for an arbitrary map $f$ the formula $\Gamma_f$ contains a clause
corresponding to a triple $(C, C, E)$. Thus, it contains a clause corresponding to a triple
$(\tau(C), \tau(C), \tau(E))$ as well, and, therefore,
\[
  \Gamma_f = (\neg \alpha_C \lor \neg \alpha_C \lor \alpha_E) \land (\alpha_C \lor \alpha_C \lor \neg
  \alpha_E) \land \Gamma' = (\alpha_C = \alpha_E) \land \Gamma'
\]

This allows us to simplify the formula $\Gamma_f$ by ``gluing'' connected components $C$ and $E$
(that is, defining an equivalence relation on the points of $|K^{(2)}_f|$ that is coarser than the
one defined by the partition into connected components) once we meet a triple of pairs
$(a, b) \in C$, $(b, c) \in C$, and $(a, c) \in D$. This idea is implemented in~\cite{poen,poen_fr};
we will discuss it, and disprove some of the results stated in these papers, in the next section.

\FloatBarrier

\section{Liftings of smooth immersions}

In this section, we explore connections between lifting maps between graphs and lifting generic
immersions between smooth manifolds. We provide a counterexample to a result stated
in~\cite{poen,poen_fr}. Additionally, we show that for each $\Gamma$ that potentially could be a
formula from~\cref{thm:gamma_cond}, there exists a generic smooth immersion
$f \from X \looparrowright Y$ of a surface $X$ with boundary into a 3-manifold $Y$ with boundary
such that the restriction $f'$ of $f$ onto the preimage of the multiple point set forms a map
between graphs with $\Gamma_{f'} = \Gamma$.

We will use the word ``generic'' in the same sense in which it is used in~\cite{poen}. Specifically,
we say that $f \from X \looparrowright Y$ is \emph{generic}\index{generic immersion} when
\begin{enumerate}
\item $f^{-1}(\partial Y) = \partial X$,
\item for $y \in Y \setminus \partial Y$ and $f^{-1}(y) = \{x_i\}_{i=1}^{n}$, the subspaces
  $df_{x_i}(T_{x_i}X)$ are in general position in $T_y Y$, that is
  \[
  \codim \bigcap_{i=1}^{n} df_{x_i}(T_{x_i}X) = \sum_{i=1}^{n} \codim df_{x_i}(T_{x_i}X),
  \]
\item for $y \in \partial Y$ and $f^{-1}(y) = \{x_i\}_{i=1}^{n}$, the subspaces $T_y \partial Y$ and
  $df_{x_i}(T_{x_i} X)$ are in general position in $T_y Y$.
\end{enumerate}

Let $f \from X \looparrowright Y$ be a generic immersion. In \cite{poen} and \cite[Th\'{e}or\`{e}me
1]{poen_fr}, Po\'{e}naru claimed that for the existence of a lifting $\wt{f} \from X \to Y \times
\R$ to an embedding, it is necessary and sufficient that $\mu_2(f) = \nu_3(f) = 0$.

Here, the condition $\mu_2(f) = 0$ is equivalent to the triviality of the covering
$P_2 \from \Conf_f(X, 2) \to \wt{\Conf}_f(X, 2)$, where $\Conf_f(X, 2)$ represents the configuration
space of ordered pairs of distinct points with the same image under $f$, and $\wt\Conf_f(X, 2)$ is
its unordered version. For non-degenerate piecewise linear maps between graphs $K$ and $L$, this
condition is equivalent to the first condition in~\cref{thm:gamma_cond}. Indeed, both
$\Conf_f(X, 2)$ and $\wt\Conf_f(X, 2)$ equivariantly deformation retract onto $|K^{(2)}_f|$ and
$|\wt{K}^{(2)}_f|$, respectively. Therefore, the triviality of $P_2$ is equivalent to the triviality
of $p_2 \from |K^{(2)}_f| \to |\wt{K}^{(2)}_f|$.

Now, let us consider the condition $\nu_3(f) = 0$. Following Po\'{e}naru's approach, we define a set
$S_{2,3} \subseteq \Conf_f(X, 2) \times \wt{\Conf}_f(X, 3)$ that includes points
$(a, b) \times \{c, d, e\}$ where $\{a, b\} \subset \{c, d, e\}$. On $S_{2,3}$, we introduce a
collection $\TT$ of all equivalence relations satisfying the following properties (denoting
unordered triples as $T_1$ and $T_2$ for simplicity):
\begin{enumerate}
\item if $(a, b) \times T_1 \sim (c, d) \times T_2$, then $(b, a) \times T_1 \sim (d, c) \times T_2$,
\item if the points $(a, b), (c, d)$ lie in the same connected component of $\Conf_f(X, 2)$, then
  $(a, b) \times T_1 \sim (c, d) \times T_2$,
\item once $(a, b) \times \{a, b, c\} \sim (b, c) \times \{a, b, c\}$, we have $(a, b) \times \{a, b,
  c\} \sim (b, c) \times \{a, b, c\} \sim (a, c) \times \{a, b, c\}$.
\end{enumerate}

Since the intersection of two equivalence relations in $\TT$ belongs to $\TT$, the set $\TT$ forms a
lower subsemilattice of the lattice $\Eq(S_{2,3})$ consisting of all equivalence relations on the
set $S_{2,3}$. Therefore, let $\sim_\nu$ denote the smallest (finest) equivalence relation in
$\mathbb{T}$. We say that $\nu_3(f) = 0$ if there are no points such that
$(a, b) \times \{a, b, c\} \sim_\nu (b, c) \times \{a, b, c\} \sim_\nu (c, a) \times \{a, b, c\}$.

Considering $(a, b) \times T_1 \sim (a, b) \times T_2$ for any $\sim$ from $\TT$ and for any points
$T_1, T_2 \in \wt{\Conf}_f(X, 3)$ with $\{a, b\} \subset T_{1}, T_{2}$, each equivalence relation
$\sim$ from $\TT$ defines the equivalence relation $\psi(\sim)$ on the set $\Conf_f(X, 2)$ by taking
$(a, b)\ \psi(\sim)\ (c, d)$ if $(a, b) \times T_1 \sim (c, d) \times T_2$ for some
$T_1, T_2 \in \wt{\Conf}_f(X, 3)$. One can check that
$\psi \from \TT \hookrightarrow \Eq(\Conf_f(X, 2))$ defines an embedding of the semilattice $\TT$
into the lattice $\Eq(\Conf_f(X, 2))$ consisting of all the equivalence relations on
$\Conf_f(X, 2)$.

It can be verified that $\psi(\TT) \subset \Eq(\Conf_f(X, 2))$ can be defined using the three
conditions defining $\TT$, where instead of the points of the form $(x, y) \times T$, we consider
pairs $(x, y)$. Therefore, the second condition, saying that $(a, b) \sim (c, d)$ whenever $(a, b)$ and $(c,
d)$ lie in the same connected component of $\Conf_f(X, 2)$, ensures that all equivalence relations
in $\psi(\TT)$ are coarser than the one defined by the partition of $\Conf_f(X, 2)$ into connected
components.

Moreover, as demonstrated in the proof of~\cite[Lemme 1.2]{poen_fr}, the equivalence relation
$\psi(\sim_\nu)$ (and, thus, $\sim_\nu$) can be iteratively constructed as follows: we start with
the equivalence relation on~$\Conf_{f}(X, 2)$ defined by the partition into connected
components. We then sequentially join connected components $C$ and $D$ whenever we meet a triple of
pairs $(a, b), (b, c) \in C$, and $(a, c) \in D$.

As discussed in the end of the preceding section, this process corresponds to simplifying the
formula $\Gamma_f$ by replacing terms of the form
$(\neg \alpha_C \lor \neg \alpha_C \lor \alpha_D) \land (\alpha_C \lor \alpha_C \lor \neg \alpha_D)$
with equalities $\alpha_C = \alpha_D$. In view of~\cref{thm:gamma_cond}, Po\'{e}naru's claim is
equivalent to the fact that, for generic immersions between smooth manifolds, the part of the
formula remaining after simplifications is always satisfiable.\footnote{This remark should be
  understood as an informal comment, since formally, the formulas $\Gamma_f$ are defined only for
  non-degenerate simplicial maps; however, the fact that the problem of lifting to embeddings of
  generic immersions between manifolds reduces to the problem of lifting maps between graphs
  (see~\cite[pages 1.21--1.22]{poen_fr} and~\cite[Theorem~4.6]{carter_saito_book}) makes this
  comment meaningful.} The next example together with the next theorem show that the latter
statement is not true.

\begin{figure}[h!]
  \centering
  \begin{subfigure}{1.0\linewidth}
    \centering
    \includegraphics[width=.99\linewidth,height=12cm,keepaspectratio]{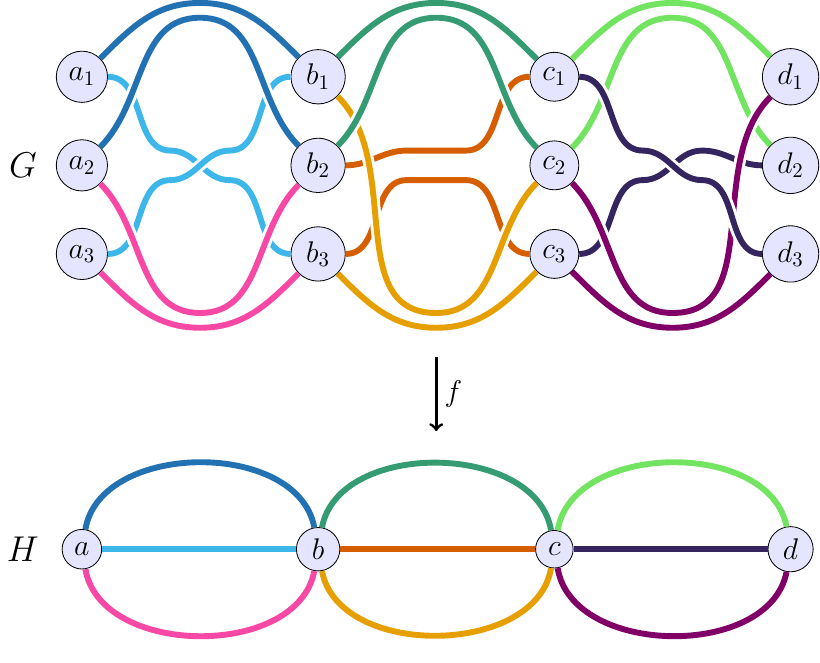}
    \caption{The map $f \from G \to H$.}\label{fig:nontrivial_gamma:f}
  \end{subfigure}

  \vspace{1cm}
  \begin{subfigure}{1.0\linewidth}
    \centering
    \includegraphics[width=.99\linewidth,height=12cm,keepaspectratio]{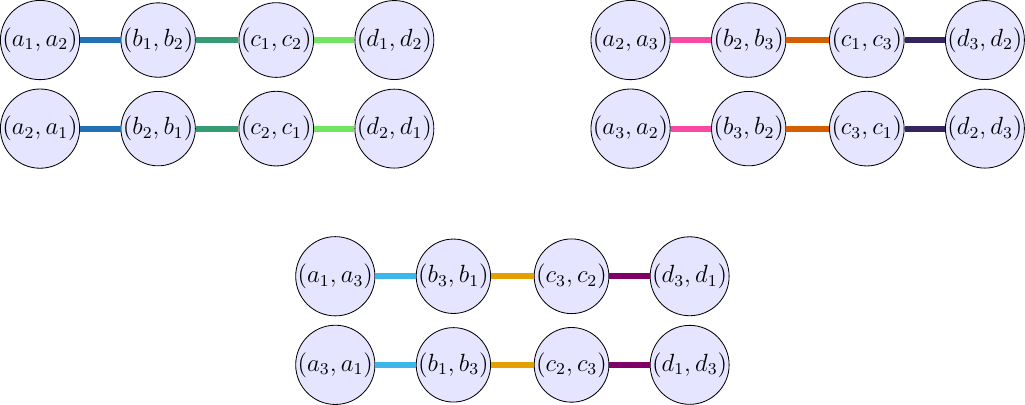}
    \caption{The graph $G^{(2)}_f$.}\label{fig:nontrivial_gamma:K2}
  \end{subfigure}
  \caption[An example of a map with ``nontrivial'' $\Gamma_f$.]{An example of a map with
    ``nontrivial'' $\Gamma_f$. The edges of the graph $G$ and their images in the graph $H$ are coloured
    identically, the same holds for edges of $G^{(2)}_f$ and edges of $G$ corresponding to
    them. Furthermore, edges of $G$ with the same image are placed closely together, and the
    ``upper'' pairs of edges of $G$ map to the ``upper'' edges of $H$; the same holds for the
    ``middle'' and the ``lower'' pairs.}\label{fig:nontrivial_gamma}
\end{figure}

\begin{example}\label{ex:nontrivial_gamma}
  Let us consider the multigraph homomorphism shown in~\cref{fig:nontrivial_gamma:f}. The graph
  $G^{(2)}_f$ (\cref{fig:nontrivial_gamma:K2}) consists of 6 connected components that split into
  pairs with respect to the involution. Consequently, $p_2 \from |G^{(2)}_f| \to |\wt{G}^{(2)}_f|$
  is trivial.

  Note that for each triple $(p_1, p_2, p_3)$, where $p$ stands for $a$, $b$, $c$, or $d$, all the
  pairs $(p_i, p_j), i \neq j$ lie in different connected components of $G^{(2)}_f$. Each such
  triple defines two disjunctions in $\Gamma_f$.\footnote{Actually, each triple defines 6
    disjunctions, one for each permutation of three points. However, after dropping equal
    disjunctions up to reordering terms, only two disjunctions remain.} Given that, we can express
  $\Gamma_f$ as follows:
  \[
    \begin{aligned}
      \Gamma_f(x_1, x_2, x_3) =\ &((x_1 \wedge x_2) \to x_3) \wedge ((\neg x_1 \wedge \neg x_2) \to \neg x_3)\ \wedge \\
                              &((x_1 \wedge x_2) \to \neg x_3) \wedge ((\neg x_1 \wedge \neg x_2) \to x_3)\ \wedge \\
                              &((x_1 \wedge \neg x_3) \to x_2) \wedge ((\neg x_1 \wedge x_3) \to \neg x_2)\ \wedge \\
                              &((x_1 \wedge \neg x_2) \to \neg x_3) \wedge ((\neg x_1 \wedge x_2) \to x_3)
    \end{aligned}
  \]
  Here $x_1$ represents the connected component containing $(a_1, a_2)$, $x_2$ represents the
  connected component containing $(a_2, a_3)$, and $x_3$ represents the connected component containing $(a_1, a_3)$.

  The first four clauses imply that either $x_1 = 1$ and $x_2 = 0$, or $x_1 = 0$ and $x_2 = 1$. In
  the former case, the next-to-last disjunction yields $x_3 = 0$, which contradicts
  $(x_1 \wedge \neg x_3) \to x_2$ as $x_2 = 0$. In the latter case, the last disjunction implies
  $x_3 = 1$, which contradicts $(\neg x_1 \wedge x_3) \to \neg x_2$ as $x_2 = 1$. Consequently,
  $\Gamma_f$ is not satisfiable, and, as a result, $f$ does not lift to an embedding.

  In spite of this, it can be seen that in the formula $\Gamma_f$, there are no clauses in which
  literals are repeated. In light of the comments above, this means that equivalence classes under
  $\sim_\nu$, defined as it has been done for generic immersions before, coincide with the connected
  components of $|G^{(2)}_f|$.
\end{example}

Now, using the provided example, we will construct a counterexample to the claim of \cite[Th\'{e}or\`{e}me
1]{poen_fr} showing that its assumptions need to be strengthened in the sense of~\cref{thm:gamma_cond}:

\begin{theorem}\label{thm:counterexample}
  There exists a smooth generic immersion $j \from T \looparrowright Y$ from a compact surface $T$
  to a compact 3-manifold $Y$ that satisfies the conditions $\mu_2(j) = \nu_3(j) = 0$, but does not lift
  to an embedding.
\end{theorem}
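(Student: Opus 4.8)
The plan is to realise \cref{ex:nontrivial_gamma} geometrically by an immersion, following the standard reduction between lifting problems for immersions of surfaces into $3$-manifolds and lifting problems for maps between graphs. The point is that \cref{ex:nontrivial_gamma} already exhibits a multigraph homomorphism $f \from G \to H$ with $p_2$ trivial (so $\mu_2 = 0$) and with $\sim_\nu$ having equivalence classes equal to the connected components of $|G^{(2)}_f|$ (so $\nu_3 = 0$, by the iterative description of $\sim_\nu$ recalled before \cref{ex:nontrivial_gamma}), yet with $\Gamma_f$ unsatisfiable, hence $f$ does not lift to an embedding by \cref{thm:gamma_cond}. So the only thing left is to produce an immersion whose graph of multiple-point preimages, together with the restriction of the immersion to it, is exactly (a subdivision of) this $f$.

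First I would recall the reduction itself, citing \cite[pages 1.21--1.22]{poen_fr} or \cite[Theorem~4.6]{carter_saito_book}: for a generic immersion $j \from T \looparrowright Y$ of a surface into a $3$-manifold, the multiple-point set $M \subset Y$ is a graph, its preimage $\wt M = j^{-1}(M) \subset T$ is a graph, and $j$ lifts to an embedding $T \hookrightarrow Y \times \R$ if and only if the restriction $j|_{\wt M} \from \wt M \to M$ lifts to an embedding; moreover the invariants $\mu_2(j)$ and $\nu_3(j)$ depend only on $j|_{\wt M}$ and correspond, respectively, to triviality of $p_2$ and to the $\sim_\nu$-condition for this restricted map. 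Thus it suffices to realise the graph map $f$ of \cref{ex:nontrivial_gamma} as $j|_{\wt M}$ for some generic immersion $j$.

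Next I would carry out the realisation. The graph $H$ is a union of arcs; thicken each edge of $H$ to a square sheet, so that a neighbourhood of $H$ in $Y$ is a regular neighbourhood of a graph, i.e. a handlebody $B$ (this is already the setting of \cref{thm:realisation_gamma}). Over each vertex $v \in V(H)$ the preimage $f^{-1}(v)$ is a finite set of points, and the local model of a generic immersion near a multiple point is the union of $k$ coordinate planes through the origin in $\R^3$; I would glue these local models along the edges of $H$ according to the combinatorics of $f$, using ribbons/bands running parallel to the edges of $H$ to connect the sheets, with a band for each edge of $G$ lying over the corresponding edge of $H$. The resulting immersed surface $T$ has $\wt M = j^{-1}(M)$ equal to the given graph $G$ (after subdivision) and $j|_{\wt M} = f$ by construction; choosing the sheets in general position makes $j$ generic in the sense defined above, and a small isotopy arranges $j^{-1}(\partial Y) = \partial T$ and the boundary general-position condition. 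In fact the cleanest route is simply to invoke \cref{thm:realisation_gamma} (stated in the excerpt) applied to the formula $\Gamma = \Gamma_f$ of \cref{ex:nontrivial_gamma}: it directly produces a generic immersion $g \from S \looparrowright B$ of a surface with boundary into a handlebody whose multiple-point graph map is $f$, and one sets $j = g$, $Y = B$, $T = S$.

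Finally I would assemble the conclusion: for this $j$ we have $\mu_2(j) = 0$ and $\nu_3(j) = 0$ because these are computed from $j|_{\wt M} = f$ and were checked in \cref{ex:nontrivial_gamma}; but $f$ does not lift to an embedding since $\Gamma_f$ is unsatisfiable (\cref{thm:gamma_cond}), hence $j$ does not lift to an embedding either, contradicting \cite[Th\'{e}or\`{e}me 1]{poen_fr}. The main obstacle is the geometric realisation step --- one must be careful that the band-gluing construction produces an \emph{immersed} surface (not just an abstract graph map), that the immersion is generic in Po\'{e}naru's precise sense including the behaviour over $\partial Y$, and that no \emph{spurious} extra multiple points are introduced beyond those prescribed by $f$; this is exactly the content that \cref{thm:realisation_gamma} packages, so modulo that theorem the present statement follows immediately.
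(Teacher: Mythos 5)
Your overall strategy is the paper's: take the map $f$ of \cref{ex:nontrivial_gamma}, realise it as the multiple-point graph map of a generic immersion into a handlebody via the ``three squares per triple point, X-shaped ribbons per double edge'' construction, and conclude from the unsatisfiability of $\Gamma_f$. However, there is a concrete gap in the realisation step that you flag but do not resolve. The X-ribbon construction only produces a genuine generic immersion when every vertex of $\wt{G}^{(2)}_f$ has degree two (equivalently, $G^{(2)}_f$ is a disjoint union of circles): each unordered pair of preimages of a vertex of $H$ corresponds to a double-point curve germ with exactly two half-edges emanating from the triple point, and each half-edge must be capped by a ribbon, since a double-point arc of a generic immersion cannot terminate in the interior of the target. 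For the map $f$ of \cref{ex:nontrivial_gamma} as it stands, $G^{(2)}_f$ is a union of six \emph{arcs}, not circles, so some vertices of $\wt{G}^{(2)}_f$ have degree one and the construction leaves dangling half-edges. The paper repairs this first, by adding three parallel edges $e_1,e_2,e_3$ from $a$ to $d$ in $H$ and six corresponding edges in $G$, which closes the arcs of $G^{(2)}_f$ into circles while leaving $p_2$ trivial and $\Gamma_f$ unchanged; only then does the squares-and-ribbons construction go through. Your proposal realises ``$f$ of \cref{ex:nontrivial_gamma}'' directly, which would fail without this modification.

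Two smaller points. First, outsourcing the realisation to \cref{thm:realisation_gamma} is circular in the paper's architecture: the proof of that theorem explicitly follows the construction carried out inside the proof of \cref{thm:counterexample}, and its input is only the formula $\Gamma$, so it returns a \emph{different} graph map (built by \cref{lemma:realisation} to satisfy the degree conditions), for which one must still check $\nu_3 = 0$ at the level of the immersion --- i.e.\ that $S_{2,3}/\!\sim_\nu$ coincides with the set of connected components of $G^{(2)}_f$, which the paper verifies explicitly ($S_{2,3}$ has $24$ points, etc.). Second, the paper passes to the double of $(S,B)$ to obtain the closed pair $(T,Y)$; keeping the surface with boundary is arguably admissible under the stated definition of genericity, but you should say so rather than silently identifying $j=g$.
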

\begin{proof}
  Let $f \from G \to H$ be the map from~\cref{ex:nontrivial_gamma}. We add three multiple edges
  $e_1, e_2, e_3$ to $H$, going from $a$ to $d$. We also add six edges to $G$ and define $f$ on
  these edges as follows:
  \begin{enumerate}
  \item $f(\{a_1, d_1\}) = f(\{a_2, d_2\}) = e_1$,
  \item $f(\{a_2, d_3\}) = f(\{a_3, d_2\}) = e_2$,
  \item $f(\{a_1, d_3\}) = f(\{a_3, d_1\}) = e_3$.
  \end{enumerate}

  Let us denote the resulting graphs by the same letters $G$ and $H$.

  It can be observed that the covering map $p_2$ for the resulting map $f \from G \to H$ remains
  trivial; moreover, the formula $\Gamma_f$ also coincides with the formula of the original
  map.\footnote{In fact, we have simply replaced the segments that make up the graph $G^{(2)}_f$ by
    joining their endpoints to form circles.}

  Now let us construct a surface $S$ with boundary immersed into a handlebody $B$.

  We represent each vertex $a, b, c, d$ of $H$ as an intersection of three squares with equal sides
  parallel to the coordinate planes, and label each square by the number from one to three. We
  denote the $i$-th square in the intersection corresponding to the vertex $x$ by $X_i$. For
  example, the vertex $a \in V(H)$ is represented by the intersection of three squares $A_1$, $A_2$,
  and $A_3$. Furthermore, each square $X_i$ corresponds to a vertex $x_i$ of $G$. For instance, the
  squares $A_1$, $A_2$, and $A_3$ correspond to the vertices $a_1$, $a_2$, and $a_3$, respectively.

  Thus, each vertex $x \in V(H)$ corresponds to a triple point $X_1 \cap X_2 \cap X_3$. From each
  such triple point, six half-edges start: two for each intersection $X_1 \cap X_2$, $X_1 \cap X_3$,
  and $X_2 \cap X_3$.

  \begin{figure}[h!]
    \centering
    \vspace{2em}
    \includegraphics[width=.99\linewidth,height=10cm,keepaspectratio]{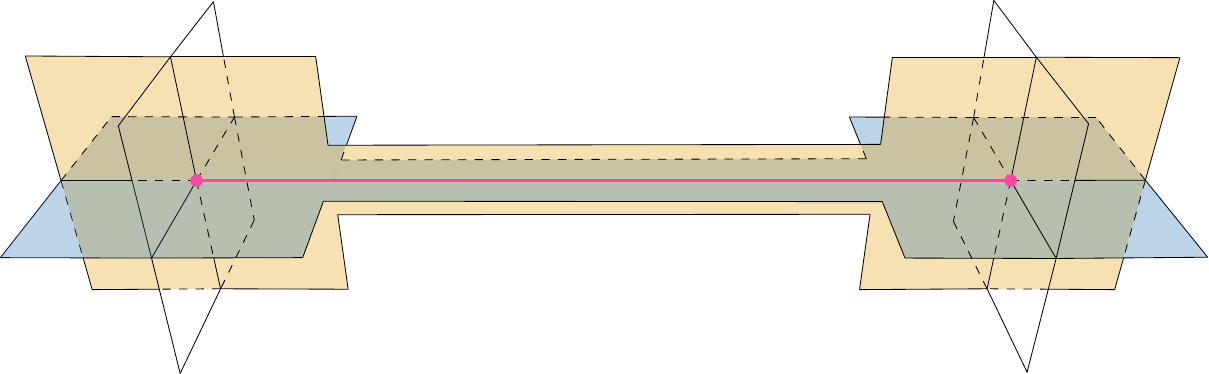}
    \caption{Two triples of intersecting squares, representing triple points, and an X-ribbon
      between them, representing a double edge. The preimages of the pink double edge are an edge lying
      on the blue squares and the blue ribbon between them, and an edge lying on the orange squares
      and the orange ribbon.}\label{fig:squares_ribbon}
  \end{figure}

  For each unordered pair of edges $\{x_i, y_j\}, \{x_k, y_l\} \in E(G)$ with the same image, where
  $f(x_i) = f(x_k) = x$ and $f(y_j) = f(y_l) = y$, we perform the following procedure.

  We choose one of the half-edges from $X_i \cap X_k$ and one of the half-edges lying in
  $Y_j \cap Y_l$ that are currently ``free'' (that is, not chosen in previous steps). Then, we
  connect these half-edges by an X-shaped ribbon,\footnote{That is, the product of two intersecting
    segments (forming the letter X) with another segment.} so that one plane of the X-shaped ribbon connects the side of
  the square $X_i$ with the side of the square $Y_j$, and the other one connects the sides of the
  squares $X_k$ and $Y_l$, see~\cref{fig:squares_ribbon}.

  Note that each vertex of $\wt{G}^{(2)}_f$ has degree two. This ensures that there is always
  a free half-edge to choose. Moreover, after processing all the pairs of edges, each half-edge
  belongs to some X-shaped ribbon.

  Observe that each X-shaped ribbon could be twisted while connecting. Additionally, when performing
  the steps described above, there is flexibility in choosing which half-edges to connect with each
  other. However, in our construction, this choice is not crucial; hence, we select free half-edges
  and twist the ribbons in an arbitrary way.

  Next, we take small 3-balls around each triple point (that is, the points $X_1 \cap X_2 \cap X_3$
  inside the 3-intersections of squares) and small 3-tubes around each curve of double points (that
  is, half-edges connected by the central curve of an X-ribbon). This construction gives us a
  handlebody $B$. The intersections of squares connected by the X-shaped ribbons within $B$ form a
  generic surface $S$ immersed into $B$. Denote the immersion by $g$. Note that the multiple points
  set of $g$ is the graph $H$ embedded into the interior of $B$, and its preimage is the graph $G$
  embedded into $S$. The boundary of $S$ is a circle embedded into the boundary of $B$.

  Furthermore, $g\big|_{|G|}$ acts exactly as $f$ on the curves representing the edges of $G$
  embedded into $S$: $g$ takes the curve between points $x_i$ and $y_j$ to the curve between points
  $f(x_i)$ and $f(y_j)$.

  To obtain a closed 3-manifold $Y$, we take the double of $B$, which is two copies of $B$ glued
  along their common boundary. The double of $S$, denoted as $T$, is then immersed into $Y$ by the
  immersion $j$ induced by $g \sqcup g \from S \sqcup S \looparrowright B \sqcup B$. Clearly, the
  immersion $j$ is in general position. The multiple point set of $j$ is a graph $H \sqcup H$
  embedded into $Y$, and its preimage is a graph $G \sqcup G$ embedded into $T$. Moreover, it is
  clear that $j\big|_{|G| \sqcup |G|}$ acts on the curves representing the edges of $G \sqcup G$ as
  $f \sqcup f$.

  Let us note that the set $S_{2,3}$ in the definition of $\sim_\nu$ for the map $g$ consists of 24
  points. Each point of $S_{2,3}$ represents an ordered pair of vertices in $G$ with the same image
  under $g$, or, equivalently, under $f$. In other words, the set $S_{2,3}$ coincides with the set
  of vertices of the graph $G_f^{(2)}$. Furthermore, it is straightforward to verify that the
  equivalence relation defined by the partition of the vertices of $G_f^{(2)}$ into connected
  components satisfies all the properties in the definition of $\sim_\nu$; therefore, these
  equivalence relations coincide. Given that, it becomes apparent that $\nu_3(g) = 0$. It is also
  evident that $\mu_2(g) = 0$ since $p_2 \from |G_f^{(2)}| \to |\wt{G}_f^{(2)}|$ is trivial.

  As all the multiple points of $g$ lie in the interior of $B$, the set $S_{2,3}$ and the relation
  $\sim_\nu$ for $j$ coincide with those of $g \sqcup g$. Therefore, since
  $\nu_3(g) = \mu_2(g) = 0$, we have $\nu_3(j) = \mu_2(j) = 0$.

  If $j$ lifts to an embedding then so does $f$. However, as shown in~\cref{ex:nontrivial_gamma},
  the formula $\Gamma_f$ is not satisfiable, and consequently, $f$ does not lift to an embedding.
\end{proof}

It can be observed that~\cref{ex:nontrivial_gamma} also provides a
counterexample to~\cite[Lemme 1.7]{poen_fr}. One can notice that the preimage of the cycle
shown in~\cite[Fig. 1.3]{poen_fr} does not necessarily have to be the two cycles as
shown in~\cite[Fig. 1.4]{poen_fr}, as the paper claims, which is where the proof fails.

It is worth noting that the mentioned mistake only affects~\cite[Lemme 1.7]{poen_fr}, which states
that a ``labyrinth''\index{labyrinth} (that is, a map $f_j$ between graphs, obtained using a given
immersion $j$ of manifolds; see~\cite[Definition 3]{poen_fr} for details) lifts if
$\mu_2(f_j) = \nu_3(f_j) = 0$. However, the reduction from the considered case of immersions of
manifolds to maps between graphs is itself correct. Thus, the immersion $j$ lifts if and only if
$f_j$ lifts, or, equivalently, the covering map $p_2$ for $f_f$ is trivial, and $\Gamma_{f_j}$ is
satisfiable. Our next goal is to show that these conditions cannot be weakened in the general case:
we will demonstrate that for any suitable boolean formula $\Gamma$ it is possible to construct a
generic immersion $j$ with $\Gamma_{f_j} = \Gamma$.

Notice that any $\Gamma_f$, as defined in the previous section, satisfies the following condition:
once it contains a clause of the form $(\alpha \land \beta) \to \gamma$, it also contains a clause
$(\neg \alpha \land \neg \beta) \to \neg \gamma$, up to double negation elimination. Additionally,
let us note that the formulas that cannot be simplified by the algorithm described at the end of
the previous section are those where, for each clause $(\alpha \land \beta) \to \gamma$, all
variables constituting $\alpha$, $\beta$, and $\gamma$ are distinct.

The next theorem shows that any $\Gamma$ of the form described above can be realised as $\Gamma_f$
of a multigraph homomorphism $f$ induced by a generic immersion of a surface into a handlebody, similar
to what we have constructed in the previous theorem.

\begin{theorem}\label{thm:realisation_gamma}
  Let $\Gamma(x_1, \dots, x_n)$ be a boolean formula of the form
  \[
    \Gamma(x_1, \dots, x_n) = \bigwedge_{j=1}^m \left((\alpha_j \wedge \beta_j) \to \gamma_j\right)
  \]
  where $\alpha_j$, $\beta_j$, and $\alpha_j$ are variables or their negations. Assume that for every $j$
  \begin{enumerate}
  \item the variables in the literals $\alpha_j$, $\beta_j$, and $\gamma_j$ are distinct,
  \item there exists $j'$ such that $\alpha_{j'} = \neg \alpha_j$, $\beta_{j'} = \neg \beta_{j}$, and $\gamma_{j'}
    = \neg \gamma_j$ (after double negation elimination).
  \end{enumerate}

  Then there exists a generic immersion $g \from S \looparrowright B$ of a surface $S$ with
  boundary into a handlebody $B$ such that the restriction of $g$ onto the preimage of the set of
  multiple points induces a homomorphism $f \from G \to H$ of multigraphs. Additionally, the formula
  $\Gamma_f$ is equivalent to $\Gamma$, differing only in variable renaming, the presence of
  duplicate clauses, and reordering of clauses and literals within clauses.
\end{theorem}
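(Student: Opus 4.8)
The plan is to mimic the construction in the proof of \cref{thm:counterexample}, but starting from a map between graphs designed directly from the combinatorial data of $\Gamma$, rather than from a single fixed example. First I would build the target graph $H$ and the source graph $G$. For $H$, take one vertex $p$ and, for every variable $x_k$, attach a "gadget vertex" whose local picture is a triple point, i.e. three mutually transverse square sheets meeting along three double curves; the three double curves emanating from the $k$-th gadget will carry the three ordered pairs $(k,1),(k,2),(k,3)$ that encode the connected components associated with $x_k$ and $\neg x_k$. Concretely, $G^{(2)}_f$ should be a disjoint union of arcs (later closed up to circles exactly as in \cref{thm:counterexample}), one pair of arcs for each variable, so that $|G^{(2)}_f|$ has $2n$ components falling into $n$ orbits under $\tau$, giving triviality of $p_2$ for free. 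The edges of $H$ are then added so as to force, for each clause $(\alpha_j\wedge\beta_j)\to\gamma_j$, a triple of vertices $u,v,w\in V(G)$ with $(u,v)$ in the component coding $\alpha_j$, $(v,w)$ in the component coding $\beta_j$, and $(u,w)$ in the component coding $\gamma_j$; condition (1) on $\Gamma$ (distinct variables) guarantees these three components are genuinely different, so the clause is not self-referential and survives as a real $3$-CNF clause, while condition (2) is exactly the built-in symmetry of $\Gamma_f$ under $\tau$ (the simultaneous negation of $\alpha_j,\beta_j,\gamma_j$), so it is automatically matched. The upshot of this first stage is a multigraph homomorphism $f\from G\to H$ with $p_2$ trivial and $\Gamma_f$ equal to $\Gamma$ up to the stated renaming/reordering/duplication.

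Second, I would realise $f$ geometrically exactly as in \cref{thm:counterexample}: represent each vertex of $H$ as a transverse triple intersection of three squares (the three sheets), represent each double edge of $H$ by an X-ribbon joining the appropriate free half-edges coming out of two triple points, and record that each square $X_i$ over a vertex $x$ of $H$ corresponds to a vertex $x_i$ of $G$, while the central curve of each X-ribbon realises the corresponding edge of $G$ on $S$. The bookkeeping is identical to the one already carried out there: every vertex of $\wt G^{(2)}_f$ has degree two, so free half-edges always exist, and after processing all pairs of edges with the same image every half-edge is used. Taking small $3$-balls around the triple points and small $3$-tubes around the double curves produces the handlebody $B$, and the union of square sheets inside $B$ is a generically immersed surface $S$ with boundary; denote the immersion $g$. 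By construction the multiple-point set of $g$ is $|H|$, its preimage is $|G|$, and $g\big|_{|G|}$ is $|f|$, so the restriction of $g$ to the preimage of the multiple-point set is (up to subdivision) the homomorphism $f$. One must take a little care that the X-ribbons can be attached without forcing extra intersections or changing which pairs of sheets meet; as in \cref{thm:counterexample}, any twisting is harmless and the choice of which free half-edges to pair is irrelevant for $\Gamma_f$, which depends only on the incidence pattern of components of $G^{(2)}_f$, not on the embedding.

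The main obstacle, and where I would spend the most effort, is the first stage: producing the graph $G$ so that $G^{(2)}_f$ has \emph{exactly} the prescribed components and clause-triples and \emph{no spurious ones}. The danger is that when several clauses share a variable $x_k$, the double curve coding $x_k$ gets visited by many vertices of $G$, and one must ensure that no unintended pair of vertices of $G$ with the same image lands in a component we did not plan for (which would add extra clauses, or worse, identify components and collapse $\Gamma_f$). The clean way to handle this is to give each clause its own local copy of the relevant triple-point gadgets and then glue these copies along the double curves coding the shared variables, checking that the gluing only concatenates arcs of $G^{(2)}_f$ belonging to the same component and never creates a new junction between different components; here condition (1) is essential, because it forbids a clause from wanting two of its three components to coincide, which is the only way the gluing could go wrong. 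Once this combinatorial model is verified, closing the arcs of $G^{(2)}_f$ into circles (exactly the trick used at the end of \cref{thm:counterexample} to make $p_2$ trivial while not altering $\Gamma_f$) and then running the ribbon construction of the second stage finishes the proof. I expect the remaining verifications — that $g$ is generic in Po\'{e}naru's sense, that $\partial S$ maps into $\partial B$, and that $\Gamma_f$ reads off correctly — to be routine, following the template already established.
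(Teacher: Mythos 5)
Your proposal follows essentially the same route as the paper: first a purely combinatorial construction of $f \from G \to H$ with one triple-point vertex of $H$ per clause and double-edge circuits of $G^{(2)}_f$ per variable (this is the paper's \cref{lemma:realisation}, which also arranges $G^{(2)}_f$ to be a disjoint union of circles with $p_2$ trivial and no spurious triples, since every new edge of $H$ is a fresh multi-edge with exactly two preimages), and then the squares-and-X-ribbons realisation copied from \cref{thm:counterexample}. The only wrinkle is that your first paragraph attaches a triple-point gadget per \emph{variable}, which cannot work (a clause, not a variable, is what a triple point encodes), but your third paragraph corrects this to one gadget per clause glued along shared-variable double curves, which is exactly the paper's construction.
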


To prove the theorem, we will use the following lemma, which says, informally, that for each
$\Gamma$ as in~\cref{thm:realisation_gamma}, we can construct a multigraph homomorphism
$f \from G \to H$, whose $\Gamma_f$ is equivalent to $\Gamma$, and $f$ is similar to that we used in
the construction of~\cref{thm:counterexample} which allows us, following this construction, to
obtain the needed immersion $g \from S \looparrowright B$.

\begin{lemma}\label{lemma:realisation}
  Let $\Gamma(x_1, \dots, x_n)$ be the same boolean formula as the one in~\cref{thm:realisation_gamma}.

  Then, there is a multigraph homomorphism $f \from G \to H$, such that
  \begin{enumerate}
  \item $p_2 \from |G^{(2)}_f| \to |\wt{G}^{(2)}_f|$ is trivial,
  \item $\Gamma_f$ is equivalent to $\Gamma$ in the sense of~\cref{thm:realisation_gamma},
  \item the edges of $H$ have two preimages,
  \item the vertices of $H$ have three preimages,
  \item the vertices of $G$ have degree 4,
  \item the vertices of $H$ have degree 6,
  \item the vertices of the graph $G^{(2)}_f$ have degree two, or, equivalently, $G^{(2)}_f$ is a disjoint
    union of circles.
  \end{enumerate}
\end{lemma}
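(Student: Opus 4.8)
The plan is to build $G$ and $H$ directly from $\Gamma$, in the same spirit as the construction in the proof of~\cref{thm:counterexample}. We choose a set $J$ of representatives, one clause $(\alpha_j \wedge \beta_j) \to \gamma_j$ from each of the conjugate pairs guaranteed by assumption~(2). For each $j \in J$ we put one vertex $\bar v_j$ into $H$ and three vertices $v_j^1, v_j^2, v_j^3$ into $G$ with $f(v_j^i) = \bar v_j$; these are all the vertices. Call the three two-element subsets of $\{v_j^1, v_j^2, v_j^3\}$ the \emph{slots} of $\bar v_j$, and attach to the slot $\{v_j^1, v_j^2\}$ the literal $\alpha_j$, to $\{v_j^2, v_j^3\}$ the literal $\beta_j$, and to $\{v_j^1, v_j^3\}$ the literal $\gamma_j$; by assumption~(1) the three literals of one vertex involve three distinct variables.

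The first step is a local computation. If $\bar v$ is a vertex of $H$ with exactly three preimages and the three slot-pairs, read as ordered pairs in the index order, lie in connected components of $G^{(2)}_f$ carrying literals $u$, $v$, $w$ respectively, then the triples of vertices of $G$ over $\bar v$ contribute to $\Gamma_f$ precisely the clause $(u \wedge v) \to w$ and its conjugate $(\neg u \wedge \neg v) \to \neg w$: writing out the six orderings of the three preimages shows that they yield only these two clauses, up to reordering of literals. Since in our $G$ the only triples of vertices with a common image are the triples $(v_j^1, v_j^2, v_j^3)$, this will identify $\Gamma_f$ with $\bigwedge_{j \in J}\big[\,((\alpha_j \wedge \beta_j) \to \gamma_j) \wedge ((\neg\alpha_j \wedge \neg\beta_j) \to \neg\gamma_j)\,\big]$, which by assumption~(2) is $\Gamma$ up to repetition and reordering of clauses --- provided the component of $G^{(2)}_f$ containing $(v_j^1, v_j^2)$ carries the literal $\alpha_j$, and likewise for the other two slots.

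To arrange this we route the components of $G^{(2)}_f$ by hand. For each variable $x_i$ we list cyclically all of its occurrences among the slots of the $\bar v_j$; each occurrence has a sign (according to whether its literal is $x_i$ or $\neg x_i$) and hence a preferred orientation of its slot (the index order if positive, the reverse if negative). Between two consecutive occurrences we add an edge of $H$ joining the two vertices $\bar v_j$ involved, and declare its two preimages in $G$ to join the corresponding slot-vertices, choosing between the two matchings of the endpoints (``untwisted'' or ``twisted'') according to whether the two signs agree or not. Such an edge of $H$ has two preimages and, joining two distinct vertices of $H$, produces exactly two edges of $G^{(2)}_f$: one joining the two slot-points in their preferred orientations, one joining them in the reversed orientations. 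Hence $G^{(2)}_f$ becomes the disjoint union over $i$ of a circle $C_i$ through the preferred-orientation points of $x_i$ and a disjoint circle $\tau(C_i)$ through the reversed ones, so the deck involution fixes no component and $p_2$ is trivial; assigning the variable $x_i$ to $C_i$ makes the literal of $C_i$ equal to $x_i$ and that of $\tau(C_i)$ equal to $\neg x_i$, which is exactly the matching required in the previous paragraph.

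The remaining assertions are then read off. Every edge of $H$ has two preimages and every $\bar v_j$ has three; each slot of $\bar v_j$ sits on a $2$-regular circle and hence carries two incident edges of $H$, so $\bar v_j$ has degree $6$ and each $v_j^i$, lying in two of the three slots, has degree $4$; and $G^{(2)}_f$ is $2$-regular, hence a disjoint union of circles. The point requiring care --- and, with the local computation above, the main thing to verify --- is that no spurious vertices, edges or clauses appear in $G^{(2)}_f$ beyond those just described; this uses only that vertices of $G$ share an image exactly within a triple $(v_j^1, v_j^2, v_j^3)$, that edges of $G$ share an image exactly within a fibre of an $H$-edge, and that these $H$-edges, being non-loops, contribute nothing further to $G^{(2)}_f$. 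To ensure that the $H$-edges really are non-loops and the circles $C_i$ really are circles (rather than a single vertex or a bigon), we first replace $\Gamma$ by the equivalent formula in which every clause of $J$ is repeated three times: this only introduces duplicate clauses, which the statement permits, and it forces every variable to occur at least three times, so that each $C_i$ has at least three vertices and every edge of $H$ joins distinct vertices.
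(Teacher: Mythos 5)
Your construction is essentially the same as the paper's: one $H$-vertex with three $G$-preimages per clause, orientations of the slot-pairs determined by the literal signs, and a cyclic chain of doubled edges threading the occurrences of each variable so that $G^{(2)}_f$ splits into pairs of circles $C_i \sqcup \tau(C_i)$, after which the clause count and the degree counts are read off exactly as in the paper. Your extra step of tripling the clauses to guarantee that every variable occurs at least three times (avoiding loops in $H$ when a variable occurs only once) is a careful patch of a degenerate case the paper's proof passes over silently, and it is permitted since the required equivalence of $\Gamma_f$ with $\Gamma$ tolerates duplicate clauses.
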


\begin{proof}
  First note that all disjunctions that constitute $\Gamma$ split into pairs
  $(\alpha \land \beta) \to \gamma,\ (\neg \alpha \land \neg \beta) \to \neg \gamma$. After
  selecting only one disjunction from each pair, let us renumber them from 1 to $\sfrac{m}2$. Thus,
  we have $\sfrac{m}2$ disjunctions
  $(\alpha_1 \land \beta_1) \to \gamma_1, \dots, (\alpha_{\sfrac{m}2} \land \beta_{\sfrac{m}2}) \to
  \gamma_{\sfrac{m}2}$.

  We will construct the graphs $G$ and $H$ and the map $f \from G \to H$ iteratively. To begin with,
  let $G$ and $H$ be empty graphs.

  For each disjunction $(\alpha_j \land \beta_j) \to \gamma_j$, add three vertices $v^j_1$, $v^j_2$, and
  $v^j_3$ to the graph $G$, and add one vertex $v^j$ to the graph $H$, and define a map
  $f(v^j_i) = v^j$ on the new vertices $v_j^i$.

  Next, we define a set $S_i$ for each variable $x_i$, which, at the end of the procedure described
  below, will contain all the vertices of the connected component associated with the variable
  $x_i$.  For simplicity, if $y = x_i$ is a variable, denote the corresponding set $S_i$ by
  $S(y)$.

  Initially, let all $S(y)$ be empty. Then, for each of the $\sfrac{m}2$ disjunctions
  $(\alpha_j \wedge \beta_j) \to \gamma_j$, we perform the following steps:
  \begin{enumerate}
  \item If $\alpha_j = x$, add $(v^j_1, v^j_2)$ to $S(x)$. Otherwise, if $\alpha_j = \neg x$, add $(v^j_2, v^j_1)$ to $S(x)$.
  \item Similarly, if $\beta_j = y$, add $(v^j_2, v^j_3)$ to $S(y)$. If $\beta_j = \neg y$, add $(v^j_3, v^j_2)$ to $S(y)$,
  \item Finally, if $\gamma_j = z$, add $(v^j_1, v^j_3)$ to $S(z)$. If $\gamma_j = \neg z$, add $(v^j_3, v^j_1)$ to $S(z)$.
  \end{enumerate}

  After performing these steps, for each $j$ and for any $1 \leq i \neq k \leq 3$, exactly one of
  the pairs $(v^j_i, v^j_k)$ and $(v^j_k, v^j_i)$ belongs to some set $S_l$. Furthermore,
  $S_l \cap S_t = \varnothing$ for $l \neq t$, and $S_l \cap \tau(S_t) = \varnothing$ for any $l$
  and $t$. Here $\tau(S_t) = \{ (y, x)\ |\ (x, y) \in S_t \}$ is the set of ``reversed'' pairs in
  $S_t$.

  We proceed by choosing arbitrary linear orders on each set $S_l$. Using these orders, we will add edges to $G$
  and $H$ as follows.

  Let
  $S_l = \left\{ (v^{j_1}_{i_1}, v^{j_1}_{k_1}), (v^{j_2}_{i_2}, v^{j_2}_{k_2}), \dots,
    (v^{j_s}_{i_s}, v^{j_s}_{k_s}) \right\}$. For each $r = 1, \dots, s-1$ we add a new edge
  $e_{l,r}$ between $v^{j_r}$ and $v^{j_{r+1}}$ in $H$.\footnote{If there are edges between $v^{j_r}$ and
    $v^{j_{r+1}}$ in $H$, we add a new one.} We add two edges $e_{l,r}^1$ and
  $e_{l,r}^2$ connecting $v^{j_r}_{i_r}$ with $v^{j_{r+1}}_{i_{r+1}}$, and $v^{j_r}_{k_r}$ with
  $v^{j_{r+1}}_{k_{r+1}}$, respectively, to $H$, and set $f(e^1_{l,r}) = f(e^2_{l,r}) =
  e_{l,r}$.

  Additionally, we add an edge $e_{l,s}$ between $v^{j_s}$ and $v^{j_1}$ to $H$, and two
  edges $e^1_{l,s}$ and $e^2_{l,s}$ between $v^{j_s}_{i_s}$ and $v^{j_1}_{i_1}$, and between
  $v^{j_s}_{k_s}$ and $v^{j_1}_{k_1}$, respectively, to $G$. We set
  $f(e^1_{l,s}) = f(e^2_{l,s}) = e_{l,s}$.

  It is evident that at each step, two edges $((v^{j_r}_{i_r}, v^{j_r}_{k_r}), (v^{j_{r+1}}_{i_{r+1}},
  v^{j_{r+1}}_{k_{r+1}}))$ and $((v^{j_r}_{k_r}, v^{j_r}_{i_r}), (v^{j_{r+1}}_{k_{r+1}},
  v^{j_{r+1}}_{i_{r+1}}))$ are added to $G^{(2)}_f$. Thus, after processing a single set $S_l$, we
  obtain in $G^{(2)}_f$ two circles on the vertices of $S_l$ and $\tau(S_l)$. Therefore, after
  processing all the sets $S_l$, the graph $G^{(2)}_f$ consists of a disjoint union of circles on the
  vertices of the sets $S_l$ and $\tau(S_l)$.

  It is clear that $p_2 \from |G^{(2)}_f| \to |\wt{G}^{(2)}_f|$ is trivial because $S_l \cap
  \tau(S_l) = \varnothing$ for any $S_l$.

  Additionally, if we associate each $S_l$ to a variable $x_l$, it can be seen that each triple
  $\{v^j_1, v^j_2, v^j_3\}$ contributes two different disjunctions
  $(\alpha_j \land \beta_j) \to \gamma_j$ and $(\neg \alpha_j \land \neg \beta_j) \to \neg \gamma_j$
  to $\Gamma_f$. Actually, it adds 6 disjunctions, one for each permutation of the triple, but four
  of them can be derived from the other two by permutation of their terms, so we consider them as
  duplicates.

  Therefore, since these and only these disjunctions form the formula $\Gamma$, the formulas
  $\Gamma_f$ and $\Gamma$ become equivalent after elimination of the duplicates.

  If follows from the construction that the vertices of $H$ have 3 preimages, and the edges of $H$
  have 2 preimages. Furthermore, from the latter fact it follows that the map $\hat{f} \from
  \wt{G}^{(2)}_f \to H$ induced by $f \from G \to H$ is injective on the edges; thus, because each
  vertex $v$ of $H$ have 3 preimages under $\hat{f}$ (that is, unordered pairs of $f^{-1}(v)$), and
  each such preimage have degree two,\footnote{Recall that $G^{(2)}_f$ is a disjoint union of
    circles, so $\wt{G}^{(2)}_f$ is as well.} the vertices of $H$ have degree six.

  Finally, let us note that each vertex $x$ of $G$ is included into exactly 2 vertices of
  $\wt{G}_f^{(2)}$ (recall that they are unordered pairs of disctinct vertices of $G$ with the same
  image), and both of them have degree two. The construction provides that each such vertex
  corresponds to a pair of edges in $G$ incident with the vertex $x$. Therefore, the vertices of $G$
  have degree four.
\end{proof}

\begin{proof}[Proof of~\cref{thm:realisation_gamma}]
  By using~\cref{lemma:realisation} we obtain a multigraph homomorphism $f \from G \to H$ whose formula
  $\Gamma_f$ is equivalent to $\Gamma$. To construct a generic immersion $g \from S \looparrowright B$ let us
  follow the construction of $g \from S \looparrowright B$ in~\cref{thm:counterexample}.

  To see that it is applicable in this case, let us note that each vertex $v$ of $H$ has degree 6,
  and each unordered pair of its distinct preimages form a vertex in $\wt{G}^{(2)}_f$ of degree 2, hence
  it is possible to represent $v$ as an intersection of 3 squares, and assign a half-edge
  coming from the triple point of this intersection to each edge in $\wt{G}^{(2)}_f$ incident to the
  pair of preimages of $v$.
\end{proof}

It is worth noting that $B$ can be embedded in $\R^3$. Thus, a natural question arises: for which
examples is it possible to glue the boundary of $S$ to a surface embedded in $\R^3 \setminus B$ such
that we obtain a generic immersion $S \looparrowright \R^3$ that realises $\Gamma$?\footnote{Note
  that this question is different from the~\cite[Proposition 4.19]{carter_saito_book} and~\cite{li},
  where it is proven that an appropriate graph $H$ can be realised as the multiple point set of a generic
  immersion $S \looparrowright \R^3$.}

In some cases, such as the examples from~\cref{thm:counterexample} and certain examples obtained
from the construction in~\cref{ex:nontrivial_gamma} by joining the endpoints of the connected
components of $G^{(2)}_f$ to obtain circles, it can be shown that it is not possible to glue the
boundary of $S$ in such a way. One can demonstrate this by checking that the boundary of $S$ is not
null-homologous in $\R^3 \setminus H$. With a computer, checking this is straightforward, using a
sort of Wirtinger presentation for $\pi_1(\R^3 \setminus H, \Z)$ as described in~\cite[Proposition
9.1.9]{groupoids}.

\section{Approximation by embeddings}

In this section, we will establish a connection between the existence of a lifting to an embedding
and the concept of approximation by embeddings for generic maps. We will define the notion of
generic maps in our context and show that the condition of the triviality of the covering map $p_2$
becomes sufficient for the existence of a lifting in the special case of generic maps from trees to
segments.

Before we proceed, let us introduce some definitions.

\begin{definition}[approximability by embeddings]
  Let $f \from G \to H$ be a simplicial map between graphs, and let $i \from |H| \hookrightarrow S$
  be a continuous embedding of $|H|$ into a surface $S$ with a metric $d$. We say that $f$ is
  \emph{approximable by embeddings}\index{approximability by embeddings} if for every
  $\varepsilon > 0$, there exists a continuous embedding $j \from |G| \hookrightarrow S$ such that
  $d(j(x), i(|f|(x))) < \varepsilon$ for every $x \in |G|$.
\end{definition}

It is worth noting that the approximability of $f$ generally depends on the embedding $i$. However,
as we will demonstrate later, it does not depend on the choice of a metric on $S$. Moreover, as we
will show later, the approximability of $f$ is equivalent to the existence of a compatible with $f$
embedding of $|G|$ into a neighborhood of $i(|H|)$ in $S$, and, therefore, depends only on the
geometry of this neighborhood.

\begin{definition}[{ribbon graph\index{ribbon graph}, see~\cite[Definition 1.5]{graphs_surfaces} or~\cite[Definition 2.4]{minc}}]
  \emph{A ribbon graph} $\HH$ is a surface with boundary represented as the union of two sets of
  discs: a set $V(\HH)$ of vertices or 0-handles, and a set $E(\HH)$ of edges or 1-handles,
  satisfying the following conditions:
  \begin{enumerate}
  \item $D_v \cap D_w = \varnothing$ and $D_e \cap D_g = \varnothing$ for different vertices
    $v, w \in V(\HH)$ and different edges $e, g \in E(\HH)$,
  \item for each edge $D_e$ there are exactly two vertices $D_v$ and $D_w$ that intersect $D_e$;
    moreover, $D_e \cap D_v$ and $D_e \cap D_w$ are two disjoint arcs lying in the boundary of
    $D_e$, and in the boundaries of $D_v$ and $D_w$, respectively.
  \end{enumerate}
\end{definition}

By~\cite[Theorem 1.3]{hoffman_embedding}, an embedded graph $f \from |H| \to S$ lies in a
triangulation of $S$; that is, there are a homeomorphism $h_s \from S \to |K_s|$, a piecewise linear
homeomorphism $g_s \from |H| \to |H_s|$, where $H_s$ subdivides $H$, and a simplicial embedding
$f_s \from H_s \hookrightarrow K_s$, such that $f = h_s^{-1} \circ |f_s| \circ g_{s}$.\footnote{In fact,
  by applying~\cite[Theorem 1.3]{hoffman_embedding} we obtain not a simplicial complex, but a
  polyhedron with a graph embedded as a 1-subpolyhedron; however, it is easy to obtain a piecewise
  linear homeomorphism $g_s \from |H| \to |H_s|$ and a simplicial embedding $f_s \from H_s \to K_s$
  by taking suitable triangulations of these polyhedra.} Recall that the surface $S$ admits a unique
piecewise linear structure.\footnote{{See, for example,~\cite[Theorem 5, Chapter
    8]{moise}.}} Thus, $|f_s|$ is uniquely defined by $f$ up to piecewise linear
homeomorphisms of $|H_s|$ and $|K_s|$.

Therefore, given an embedding $f \from |H| \to S$, let the surface $\HH$ be a regular neighborhood
of $f(|H|)$ in $S$, decomposed into discs around the images of the vertices of $H$ (these discs form
the vertices of the ribbon graph $\HH$), and strips around the edges of $H$ (these strips,
similarly, form the edges of $\HH$).\footnote{{For the definitions of regular neighborhoods, derived
    subdivisions, and simplicial neighborhoods, the reader may refer to~\cite{rourke_sanderson}.}}

Explicitly, assume we have a simplicial embedding $f_s \from H_s \hookrightarrow K_s$. Take the
second derived subdivisions $H_s''$ and $K_s''$ of $H_s$ and $K_s$, respectively, and denote by
$f''$ the induced simplicial embedding $f'' \from H_s'' \hookrightarrow K_s''$. Denote by
$N(H_s'', K_s'')$ the simplicial neighborhood of $f''(H_s'')$ in $K_s''$, that is, the simplicial
complex consisting of the simplices that meet $f(H_s'')$, with their faces. Hence
$|N(H_s'', K_s'')|$ is a regular neighborhood of $f(H)$ in $S$. As discussed earlier, we define
$\HH$ to be $|N(H_s'', K_s'')|$.

To obtain a decomposition of $\HH$ into edges and vertices, for each vertex $v \in V(H)$, let us
consider its image under the composition
$F = |f''| \circ g \from |H| \to |H_s''| \hookrightarrow |K_s''|$, where $g$ is a homeomorphism
$|H| \cong |H_s''|$, and take the star of $F(v)$ in $N(H_s'', K_s'')$ as the disc
$D_v \in V(\HH)$.\footnote{Note that the image $F(v)$ is a vertex of $K_s''$ since $H_s''$
  subdivides $H$.}

It can be seen that these discs split $\HH$ into a set of strips, with each strip intersecting
$f(|H|)$ along exactly one curve, which is a portion of the image of one of the edges of $H$. Thus,
each edge $e$ of the graph $H$ corresponds to a strip $D_e$ containing a portion of its image. Let
us take these strips as edges of the ribbon graph $\HH$.

Since a regular neighborhood of a subpolyhedron is uniquely defined up to piecewise linear
homeomorphism, see~\cite[Theorem~3.8]{rourke_sanderson}, a ribbon graph of a graph embedded into a
surface is also uniquely defined.

Furthermore, it follows that there is a natural bijection between the vertices and the edges of a
graph $H$ embedded into a surface $S$, and the vertices and the edges of an induced ribbon graph
$\HH$. Thus, taking this into account, we will label the vertices and the edges of $H$ and $\HH$ by
the same letters; for example, a vertex $v \in V(H)$ corresponds to a vertex $D_v \in V(\HH)$, and an edge
$e = \{v, w\} \in E(H)$ corresponds to an edge $D_e = D_{\{v, w\}} \in E(\HH)$.

It is also important to note that, although our definition of a ribbon graph induced by an
embedding differs from the definition of the normal neighborhood in~\cite{minc}, the ribbon graph
$\HH$ in our construction is also a normal neighborhood in the sense of~\cite[Definition
2.4]{minc}; indeed, all conditions from the latter definition, except the second one, are
automatically satisfied. However, it is easy to see that for each edge $e$ incident to a vertex $v$
of $H$, its image $f(|e|)$ intersects the boundary of $D_v$ at exactly one point. Therefore, it
follows that the condition (2) from~\cite[Definition 2.4]{minc} is also satisfied. This allows us to
use results from~\cite{minc}, using ribbon graphs in place of normal neighborhoods.

\begin{definition}[{\cite[Definition 2.7]{minc}}]\label{def:rapprox}
  Let $f \from G \to H$ be a simplicial map between graphs, and $\HH$ be a ribbon graph
  corresponding to an embedding of $|H|$ into a surface.

  We say that $f$ is \emph{R-approximable by an embedding}\index{R-approximability} if there exists
  a continuous embedding $j \from |G| \hookrightarrow \Int \HH$ called an
  \emph{R-approximation}\index{R-approximation}, satisfying the following conditions:
  \begin{enumerate}
  \item for a vertex $v \in V(G)$, $j(v) \in D_{f(v)} \in V(\HH)$,
  \item if $f(\{v, w\})$ is a vertex of $H$, then $j(|\{v, w\}|)
    \subset D_{f(\{v, w\})}$, where
    $|\{v, w\}|$ is a segment representing the edge $\{v, w\}$ in $|G|$,
  \item if $f(\{v, w\})$ is an edge of $H$, then $j(|\{v, w\}|)$ is a simple arc intersecting
    $D_{f(v)} \cap D_{\{f(v), f(w)\}})$ at a single point $p$ and
    $D_{f(w)} \cap D_{\{f(v), f(w)\}}$ at a single point $q$.\footnote{Thus, the points
      $x = j^{-1}(p)$ and $y = j^{-1}(q)$ decompose $|\{v, w\}|$ into three subsegments
      $|\{v, x\}|$, $|\{x, y\}|$, and $|\{y, w\}|$, where $j(|\{v, x\}|) \subset D_{f(v)}$,
      $j(|\{x, y\}|) \subset D_{\{f(v), f(w)\}}$, and $j(|\{y, w\}|) \subset D_{f(w)}$.}
  \end{enumerate}

  If $j$ is a general position map $j \from |G| \to \Int \HH$ satisfying these conditions, we call
  $j$ a \emph{generic R-approximation}\index{R-approximation!generic} of $f$.
\end{definition}

\begin{lemma}[{\cite[Propositions 2.9]{minc}}]\label{lemma:Rapprox}
  Let $f \from G \to H$ be a simplicial map between graphs, and let $i \from |H| \hookrightarrow S$ be an
  embedding of $H$ into a surface $S$ with a metric. Then the following statements are equivalent:
  \begin{enumerate}
  \item $f$ is approximable by embeddings.
  \item $f$ is R-approximable by an embedding.
  \end{enumerate}
\end{lemma}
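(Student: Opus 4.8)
The plan is to establish the two implications separately, using the ribbon graph $\HH$ as the bridge between the two notions. The key observation is that $\HH$ is a regular neighborhood of $i(|H|)$ in $S$, so any embedding or map that stays $\varepsilon$-close to $i(|f|(\cdot))$ for small enough $\varepsilon$ automatically lands inside $\Int\HH$, and conversely $\Int\HH$ can be made to lie in an arbitrarily small neighborhood of $i(|H|)$ by passing to a finer triangulation.

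First I would prove $(2) \Rightarrow (1)$. Suppose $j \from |G| \hookrightarrow \Int\HH$ is an R-approximation. Since $\HH$ is a regular neighborhood of $i(|H|)$, there is a deformation retraction $\rho \from \HH \to i(|H|)$; moreover, by the explicit simplicial construction of $\HH$ (built from stars in the second derived subdivision), one can arrange that $\rho$ maps $D_v$ into $i(v)$ for vertices and maps each strip $D_e$ onto the arc $i(|e|)$ compatibly with the endpoints. Conditions (1)–(3) of \cref{def:rapprox} say precisely that $j$ is, up to this retraction, a parametrization tracking $i\circ|f|$: on each edge $\{v,w\}$ with $f(\{v,w\})$ an edge, the three subsegments map into $D_{f(v)}$, $D_{\{f(v),f(w)\}}$, $D_{f(w)}$ in order, so $\rho\circ j$ agrees combinatorially with $i\circ|f|$. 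Then for any $\varepsilon>0$ I would take a triangulation of $S$ fine enough that every simplex meeting $i(|H|)$ has diameter less than $\varepsilon$, rebuild $\HH$ from it (the regular neighborhood is unique up to PL homeomorphism, so this is harmless), and conclude $\HH$ lies in the $\varepsilon$-neighborhood of $i(|H|)$; hence $j$ itself, or a small PL adjustment of it reparametrizing the subsegments, is the desired $\varepsilon$-approximation. One has to check that the required embedding can be taken $\varepsilon$-close simultaneously at every point, which follows from uniform continuity of $i\circ|f|$ and compactness of $|G|$.

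For $(1) \Rightarrow (2)$, suppose $f$ is approximable by embeddings, and fix $\varepsilon$ small enough that the $\varepsilon$-neighborhood $U_\varepsilon$ of $i(|H|)$ in $S$ deformation retracts onto $i(|H|)$ and is contained in $\Int\HH$ (again using that regular neighborhoods are cofinal among neighborhoods and unique up to PL homeomorphism — \cite[Theorem 3.8]{rourke_sanderson}). Let $j_0 \from |G| \hookrightarrow S$ be an embedding with $d(j_0(x), i(|f|(x)))<\varepsilon$ for all $x$, so $j_0(|G|)\subset U_\varepsilon \subset \Int\HH$. It remains to upgrade $j_0$ to satisfy the normalization conditions (1)–(3). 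For a vertex $v$ of $G$, since $j_0(v)$ is within $\varepsilon$ of $i(f(v))$ and $\varepsilon$ is smaller than the "width" of the ribbon structure, $j_0(v)$ lies in $D_{f(v)}$ (shrink $\varepsilon$ further if needed so that the $\varepsilon$-balls around the $i(f(v))$'s are separated by the edge-strips). For an edge $\{v,w\}$ of $G$ with $f(\{v,w\})$ a vertex, the arc $j_0(|\{v,w\}|)$ is $\varepsilon$-close to the single point $i(f(\{v,w\}))$, so it lies in $D_{f(\{v,w\})}$. For an edge with $f(\{v,w\})$ an edge, $j_0(|\{v,w\}|)$ runs $\varepsilon$-close to the arc $i(|f(\{v,w\})|)$ from near $i(f(v))$ to near $i(f(w))$; a small ambient isotopy supported in $\Int\HH$, pushing the arc into general position with respect to the boundary curves of the discs $D_v$, $D_w$ and the strip, replaces it by a simple arc crossing $D_{f(v)}\cap D_{\{f(v),f(w)\}}$ and $D_{f(w)}\cap D_{\{f(v),f(w)\}}$ each once — using that the arc already separates the strip the right way topologically. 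Performing these adjustments edge by edge (and keeping them disjoint, which is possible since $j_0$ is already an embedding) yields an R-approximation $j$; dropping the injectivity requirement gives the generic variant.

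The main obstacle I expect is the bookkeeping in $(1)\Rightarrow(2)$: verifying that a generic perturbation of $j_0$ can be made to meet each disc boundary $\partial D_v$ and each strip-crossing arc in exactly one point, rather than an odd number $>1$ of points, so that condition (3) of \cref{def:rapprox} holds on the nose. This is where one genuinely uses that the strips $D_e$ are discs (not Möbius bands or wider pieces) and that the approximation is close enough to force the "parity" of crossings to be the minimal one; a clean way to phrase it is to cut $|G|$ along the preimages of $\partial D_{f(v)}$ and argue on each piece separately, but I would want to be careful that the cutting points can be chosen consistently along shared vertices. Fortunately, all of this is routine in the PL / ribbon-graph setting and is exactly what \cite[Proposition 2.9]{minc} carries out; I would cite that for the technical core and only sketch the reduction from our ribbon graph $\HH$ to Minc's normal neighborhoods, which was already set up in the paragraph preceding \cref{def:rapprox}.
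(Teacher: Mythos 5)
The paper gives no proof of this lemma at all: it is imported verbatim as \cite[Proposition 2.9]{minc}, with the only original content being the verification (in the paragraphs before \cref{def:rapprox}) that the ribbon graph $\HH$ is a normal neighborhood in Minc's sense. Your sketch of the two implications is a reasonable outline of the standard argument, and since you ultimately defer to Minc for the technical core (the reparametrization in $(2)\Rightarrow(1)$ and the bigon-removal needed to get single crossings in $(1)\Rightarrow(2)$), your approach coincides with the paper's.
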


From this point onwards, we focus on stable maps between graphs. The definition provided below is
a one-dimensional simplicial version of the definition of piecewise linear stable maps
from~\cite[Appendix B]{mel2}.

\begin{definition}
  Let $f \from G \to H$ be a simplicial map between graphs.

  We call a vertex $v \in G$ \emph{regular} with respect to $f$ if $f$ is bijective on the star
  $\st(v) = \bigcup_{e \in E(G), v \in e} e$.

  The map $f$ is called \emph{stable}\index{stable map} if, for any vertex $w \in V(H)$,
  \begin{enumerate}
  \item if $\deg(w) \neq 2$, all the vertices in $f^{-1}(w)$ are regular,
  \item if $\deg(w) = 2$, $f^{-1}(w)$ contains at most one non-regular vertex.
  \end{enumerate}
\end{definition}

If a simplicial map $f$ is stable, it must also be non-degenerate. Indeed, if $f(e) = v$, where $e$
is an edge and $v$ is a vertex, the endpoints of $e$ are non-regular with respect to $f$, implying
that $f$ is not stable.

Now we are ready to state the theorem.

\begin{theorem}\label{thm:approx}
  Let $f \from G \to J$ be a stable simplicial map from a graph $G$ to a path graph $J$ (that is, a
  triangulation of a segment). Let $i \from |J| \hookrightarrow \R^2$ be an embedding. Then the
  following statements are equivalent:
  \begin{enumerate}
  \item $f$ is approximable by embeddings with respect to $i \from |J| \to \R^2$.
  \item $|f|$ lifts to an embedding.
  \end{enumerate}
\end{theorem}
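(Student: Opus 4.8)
The plan is to prove both implications separately, translating each side into the combinatorial language developed earlier. For $(2) \Rightarrow (1)$: assume $|f|$ lifts to an embedding $\wt{|f|} = |f| \times h$, so we have a height function $h \from |G| \to \R$. Since $|J|$ is a segment, we may regard $i(|J|)$ as sitting on the $x$-axis in $\R^2$, and then the map $x \mapsto (i(|f|(x)), h(x))$ is almost an embedding of $|G|$ into $\R^2$ — it is injective because $\wt{|f|}$ is. To get a genuine $\varepsilon$-approximation of $i \circ |f|$ one simply rescales the second coordinate by a small factor $\delta$; since $|G|$ is compact, $h$ is bounded, so for $\delta$ small enough the map $x \mapsto (i(|f|(x)), \delta h(x))$ stays within $\varepsilon$ of $i(|f|(x))$, and scaling a coordinate is a homeomorphism onto its image, so injectivity is preserved. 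This direction does not even need stability or the path-graph hypothesis.

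The substance is $(1) \Rightarrow (2)$. Here the plan is to use \cref{lemma:Rapprox} to replace "approximable by embeddings" with "R-approximable by an embedding" relative to the ribbon graph $\JJ$ of $|J| \hookrightarrow \R^2$; since $|J|$ is a segment embedded in the plane, $\JJ$ is just a disc (a ribbon neighbourhood of an interval), decomposed into the discs $D_v$ over the vertices $v$ of $J$ and strips $D_e$ over the edges. Given an R-approximation $j \from |G| \hookrightarrow \Int \JJ$, I want to extract an admissible collection of linear orders on the sets $G_v = f^{-1}(v)$, $v \in V(J)$, and then invoke \cref{thm:orders}. The natural candidate: $\JJ$ is a disc, hence planar, so fix an orientation; each vertex-disc $D_v$ inherits an arc of the boundary of $\JJ$ "above" the core interval and one "below", and the strips attached to $D_v$ enter along that boundary in a definite cyclic/linear order. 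The points $j(u)$, $u \in f^{-1}(v)$, lie inside $D_v$, and I order them by how their incident arcs $j(|\{u,w\}|)$ are arranged as they leave $D_v$ along its boundary — concretely, the strips over the two edges of $J$ incident to $v$ (or the one edge, if $v$ is an endpoint) are crossed by the arcs $j(|\{u,w\}|)$ in some order across the width of the strip, and that transverse order is the linear order $\prec_v$. Stability is what guarantees this is well-defined and consistent: at a vertex $w \in V(J)$ of degree $2$ all but at most one preimage vertex is regular, so the arcs pass straight through the corresponding strips without folding, and the ordering read off on the left strip agrees with the one read off on the right strip.

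The key step — and the main obstacle — is verifying \emph{admissibility} of this collection: that for edges $A = \{u, u'\}$, $B = \{w, w'\}$ of $G$ with $f(A) = f(B) = e$ an edge of $J$, the induced orders satisfy $u \prec_v u' \iff w \prec_{v'} w'$ where $v, v'$ are the endpoints of $e$ and $f(u) = f(w) = v$, $f(u') = f(w') = v'$. Geometrically this says: two arcs $j(|A|)$ and $j(|B|)$ running across the same strip $D_e$ cannot cross, because $j$ is an embedding; so the order in which they pierce the arc $D_v \cap D_e$ at the left end of the strip equals the order in which they pierce $D_{v'} \cap D_e$ at the right end. This "two disjoint arcs in a strip don't swap their endpoints' order" fact is an elementary Jordan-curve argument, but making it rigorous requires care: one must rule out an arc re-entering the strip, which is where stability at the endpoints of $e$ (ensuring the arcs are monotone through the relevant strips, with at most one controlled exception at a degree-$2$ vertex) and the defining conditions of an R-approximation (each arc crosses each boundary arc $D_v \cap D_e$ exactly once) are used. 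Once admissibility is established, \cref{thm:orders} hands back a lifting of $|f|$, completing the equivalence; and one can note in passing that the correspondence of orderings with isotopy classes makes the two constructions mutually inverse, though the theorem as stated only asks for the equivalence.
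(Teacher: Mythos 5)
Your proposal follows essentially the same route as the paper: the easy direction by rescaling the height coordinate of the lifting, and the substantive direction by passing to an R-approximation via \cref{lemma:Rapprox}, reading off linear orders on the fibres $f^{-1}(v)$ from the transverse order of the arcs in the vertex discs and strips (with a Jordan-curve separation argument, which the paper isolates as \cref{lemma:square}, and stability handling the at most one non-regular preimage), verifying admissibility, and concluding with \cref{thm:orders}. The plan is correct and matches the paper's proof in all essential respects.
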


Let us start by proving a simple auxiliary lemma.

\begin{lemma}\label{lemma:square}
  Let $S$ be a rectangular region $[C, D] \times [H, L]$ in $\mathbb{R}^2$. Let $\alpha$ and $\beta$ denote the sides
  $\{C\} \times [H, L]$ and $\{D\} \times [H, L]$, respectively. Let $\gamma$ be a simple curve
  connecting a point $a \in \alpha$ and a point $b \in \beta$, such that
  $\gamma \cap (\alpha \sqcup \beta) = \{a, b\}$. Assume $\Phi$ is a path-connected subset of $S$
  not intersecting $\gamma$, such that $\Phi \cap \alpha$ and $\Phi \cap \beta$ are finite sets of
  points.

  Then, for all $p \in (\Phi \cap \alpha) \sqcup (\Phi \cap \beta)$, either $p_y < s(p)_y$ or
  $p_y > s(p)_y$ holds simultaneously. Here $s(p) = a$ if $p \in \alpha$, $s(p) = b$ if $p \in \beta$, and
  $q_y$ denotes the $y$-coordinate of a point $q$.
\end{lemma}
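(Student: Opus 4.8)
The plan is to extend $\gamma$ to a cross-cut of a slightly larger rectangle, apply the arc form of the Jordan curve theorem to split that rectangle into two halves, and show that $\Phi$ must lie entirely in one half.

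First I would enlarge the rectangle. Put $\hat{S}=[C-1,D+1]\times[H-1,L+1]$ and let $\hat{\gamma}$ be the arc obtained from $\gamma$ by attaching the horizontal segments $[C-1,C]\times\{a_y\}$ and $[D,D+1]\times\{b_y\}$ at $a$ and $b$ respectively; its new endpoints are $\hat{a}=(C-1,a_y)$ on the left side of $\hat{S}$ and $\hat{b}=(D+1,b_y)$ on the right side. The point of working in $\hat{S}$ is that $\hat{\gamma}$ meets $\partial\hat{S}$ exactly at $\{\hat{a},\hat{b}\}$ and its interior lies in the open rectangle $\Int\hat{S}$: since $\gamma\subseteq S$, every point of $\hat{\gamma}$ has $H\le y\le L$, so $\hat{\gamma}$ never touches the sides $y=H-1$ or $y=L+1$; and it touches $x=C-1$ only at $\hat{a}$ and $x=D+1$ only at $\hat{b}$, because $\gamma\cap(\alpha\sqcup\beta)=\{a,b\}$. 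Hence $\hat{\gamma}$ is a cross-cut of the disc $\hat{S}$, and by the Schoenflies (arc) form of the Jordan curve theorem $\hat{S}\setminus\hat{\gamma}$ has exactly two components $W^{+}$ and $W^{-}$, with the two endpoints of $\hat{\gamma}$ splitting $\partial\hat{S}$ into two arcs lying on the frontiers of $W^{+}$ and $W^{-}$; arrange the labels so that the top side $[C-1,D+1]\times\{L+1\}$ lies on the frontier of $W^{+}$ and the bottom side on the frontier of $W^{-}$.

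Next I would locate the four relevant boundary pieces. Write $\alpha^{+}=\{C\}\times(a_y,L]$, $\alpha^{-}=\{C\}\times[H,a_y)$ and define $\beta^{\pm}$ analogously, so that $\alpha\setminus\{a\}=\alpha^{+}\sqcup\alpha^{-}$ and $\beta\setminus\{b\}=\beta^{+}\sqcup\beta^{-}$. Each of these four sets is connected and, because $\hat{\gamma}\cap\alpha=\{a\}$ and $\hat{\gamma}\cap\beta=\{b\}$, disjoint from $\hat{\gamma}$. A straight vertical segment then connects $\alpha^{+}$ (when nonempty) to the top side of $\hat{S}$ inside $\hat{S}\setminus\hat{\gamma}$, and connects $\alpha^{-}$ to the bottom side, because the segments $\{C\}\times[L,L+1]$ and $\{C\}\times[H-1,H]$ avoid $\hat{\gamma}$ (it lies in the strip $H\le y\le L$ and meets $\alpha$ only at $a$); the same applies to $\beta^{\pm}$. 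Therefore $\alpha^{+}\cup\beta^{+}\subseteq W^{+}$ and $\alpha^{-}\cup\beta^{-}\subseteq W^{-}$. Finally $\Phi\subseteq S\subseteq\hat{S}$, $\Phi\cap\gamma=\varnothing$, and $\Phi$ can meet the two attached segments only in $\{a,b\}\subseteq\gamma$, so $\Phi\subseteq\hat{S}\setminus\hat{\gamma}=W^{+}\sqcup W^{-}$; being path-connected, $\Phi$ lies entirely in $W^{+}$ or entirely in $W^{-}$. If $\Phi\subseteq W^{+}$ then $\Phi\cap\alpha\subseteq W^{+}\cap\alpha=\alpha^{+}$ and $\Phi\cap\beta\subseteq\beta^{+}$, which says precisely that $p_y>s(p)_y$ for every $p\in(\Phi\cap\alpha)\sqcup(\Phi\cap\beta)$; if $\Phi\subseteq W^{-}$ we get $p_y<s(p)_y$ for every such $p$. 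This is the asserted dichotomy.

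The step I expect to be the main obstacle is the invocation of the Jordan curve theorem, precisely because $\gamma$ is only a simple continuous curve and is allowed to run along the top or bottom side of $S$; consequently $\gamma$ by itself need not cut $S$ into two pieces, and $S\setminus\gamma$ may have more than two components. Passing to $\hat{S}$ is exactly the device that repairs this: $\hat{\gamma}$ becomes a genuine cross-cut whose relative interior lies in the open disc $\Int\hat{S}$, which is the hypothesis under which the classical statement applies verbatim, so no local tameness of $\gamma$ is needed anywhere. The rest is routine bookkeeping: the degenerate cases $a_y\in\{H,L\}$ or $b_y\in\{H,L\}$ (and the case in which $\gamma$ passes through a corner of $S$, which can only happen if that corner equals $a$ or $b$) merely make one of $\alpha^{\pm},\beta^{\pm}$ empty, so the conclusion for those points is vacuous; path-connectedness of $\Phi$ is used only to confine $\Phi$ to a single $W^{\pm}$; and nothing in the argument depends on the metric on $\mathbb{R}^2$ or on which of $W^{+},W^{-}$ is the bounded component.
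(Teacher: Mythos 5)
Your proof is correct and rests on the same engine as the paper's: separate the rectangle by the arc and use path-connectedness of $\Phi$ to confine it to one side. The paper's own proof is a two-line contradiction argument citing the cross-cut theorem (Moise, Theorem 8, Chapter 2) to assert that $S \setminus \gamma$ has exactly two components with $p$ and $q$ in different ones. Your version is more careful on exactly the point you flag: the lemma's hypothesis only controls $\gamma \cap (\alpha \sqcup \beta)$, so $\gamma$ may run into the horizontal sides of $S$ and $S \setminus \gamma$ may then have more than two components, which means the cited theorem does not apply verbatim to $S$ itself. Your enlargement to $\hat{S}$ and extension to a genuine cross-cut $\hat{\gamma}$ repairs this cleanly, and your explicit identification of $\alpha^{\pm}, \beta^{\pm}$ with $W^{\pm}$ via vertical segments supplies the step the paper leaves implicit (namely, that the two components are the ``above'' and ``below'' ones in the sense needed for the dichotomy). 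In the paper's actual application the curves $\gamma_e$ meet $\partial D_{\{v_k,v_{k+1}\}}$ only at their endpoints, so the gap is harmless there, but as a proof of the lemma as stated yours is the more complete one.
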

\begin{proof}
  Suppose there exists a pair of points
  $p, q \in (\Phi \cap \alpha) \sqcup (\Phi \cap \beta)$ such that $p_y < s(p)_y$ and
  $q_y > s(q)_y$, or $p_y > s(p)_y$ and $q_y < s(q)_y$. Since $\Phi$ is path-connected and does not
  intersect $\gamma$, there must be a path $\phi$ in $\Phi$ (and thus in $S \setminus \gamma$)
  connecting $p$ and $q$. However, according to \cite[Theorem 8, Chapter 2]{moise},
  $S \setminus \gamma$ consists of two path-connected components with $p$ and $q$ lying in different
  components. This contradicts the existence of the path $\phi$.
\end{proof}

\begin{proof}[Proof of~\cref{thm:approx}]
  First, let us number the vertices of $J$ according to an arbitrary orientation on $J$ and denote
  them by $v_0, \dots, v_N$.

  Observe that any embedding $i \from |J| \hookrightarrow \mathbb{R}^2$ induces the same ribbon
  graph $\JJ$ up to a piecewise linear homeomorphism.\footnote{Given two distinct ribbon graphs
    $\JJ$ and $\JJ'$, we can take arbitrary homeomorphisms between arcs $D_v \cap D_e$ and
    $D'_v \cap D'_e$ and then extend them to a homeomorphism $\JJ \cong \JJ'$ that preserves the
    handle decompositions of $\JJ$ and $\JJ'$.} Thus, it follows from~\cref{lemma:Rapprox} that the approximability of
  $f$ by embeddings does not depend on the embedding $i$.

  Therefore, for convenience, we can assume that $i$ embeds $|J|$ into $\R^2$ with the Euclidean
  distance $d_2$ as a line segment with $i(v_k) = \left(2k+\frac12, 0\right)$, and $\JJ$ is a
  ribbon $[0, 2N+1] \times [-1, 1]$ in $\R^2$, where $D_{v_k} = [2k, 2k+1] \times [-1, 1]$ and
  $D_{\{v_k, v_{k+1}\}} = [2k+1, 2(k+1)] \times [-1, 1]$.\footnote{$\JJ$ collapses onto $i(|J|)$,
    hence it is a regular neighborhood by~\cite[Corollary 3.30]{rourke_sanderson}.}

  Now suppose that $|f|$ lifts to an embedding $\wt{|f|} \from |G| \to |J| \times \R$. Let $M$ be
  the maximum of $|h(p)|$, where $h(p) = \pr_\R \wt{|f|}(p)$. For any $\varepsilon > 0$, let
  $F \from |G| \to \R^2$ be the composition $S_\varepsilon \circ \wt{|f|}$ where
  $S_\varepsilon(q, y) = \left(i(q)_x, \frac{\varepsilon y}{2M}\right)$; here $i(q)_x$
  stands for the $x$-coordinate of $i(q)$. Therefore, putting $q = f(p)$, we have
  \[
    d_2\left(F(p), i(q)\right) = d_2\left( \left(i(q)_x, \frac{\varepsilon h(p)}{2M}\right), \left(
        i(q)_x, 0 \right) \right) = \left| \frac{\varepsilon h(p)}{2M} \right| < \varepsilon.
  \]
  Thus, $f$ is approximable by embeddings.

  Now let us prove the reverse implication. Suppose $f$ is approximable by
  embeddings. By~\cref{lemma:Rapprox}, there exists an R-approximation
  $j \from |G| \hookrightarrow \Int \JJ$. We are going to define orders on the sets $f^{-1}(v_k)$
  and $f^{-1}(\{v_k, v_{k+1}\})$. It will be more convenient to start with the sets
  $f^{-1}(\{v_k, v_{k+1}\})$.

  Consider the images of the edges from $f^{-1}(\{v_k, v_{k+1}\})$ in the rectangular regions
  $D_{\{v_k, v_{k+1}\}}$. It follows from the definition of R-approximation that
  $j(|e|) \cap D_{\{v_k, v_{k+1}\}}$ for an edge $e \in f^{-1}(\{v_k, v_{k+1}\})$ is a simple curve
  $\gamma_e$ going from the left side of $D_{\{v_k, v_{k+1}\}}$ to its right side, and intersecting
  the boundary of $D_{\{v_k, v_{k+1}\}}$ only at the endpoints. Hence, by
  applying~\cref{lemma:square}, for each pair of edges $e, g \in f^{-1}(\{v_k, v_{k+1}\})$, the
  endpoints of $\gamma_e$ have both either larger or smaller $y$-coordinates than the endpoints of
  $\gamma_g$ lying in the corresponding sides of the rectangle. Therefore, we can define a linear
  order by putting $e \prec_{k, k+1} g$ if both endpoints of $\gamma_e$ have smaller $y$-coordinates
  than the corresponding endpoints of $\gamma_g$.

  Now let us proceed to the sets $f^{-1}(v_k)$. The situation with them, although slightly more
  complex, is essentially the same. In what follows, we put $\gamma_w = j(|\st w|) \cap D_{f(w)}$
  for a vertex $w \in V(G)$.

  First note that, since $f$ is stable, all the vertices in $f^{-1}(v_0)$ and $f^{-1}(v_N)$ have
  degree one, and for any $w \in f^{-1}(v_0)$ the set $\gamma_w$ is a simple curve connecting $j(w)$
  with a point on the right side of $\partial D_{v_0}$, and this point is the only intersection of
  $\gamma_w$ with the boundary of $D_{v_0}$. Therefore, we can order the vertices $\{ w_i \}$ in
  $f^{-1}(v_0)$ with respect to the order of the $y$-coordinates of the points
  $\gamma_{w_i} \cap \partial D_{v_0}$, which gives us a linear order $\prec_0$ on $f^{-1}(v_0)$.

  The same holds true for vertices from the set $f^{-1}(v_N)$, with the only difference being that
  the curves $\gamma_w$ for $w \in f^{-1}(v_N)$ connect $j(w)$'s with points on the left side of
  $D_{v_N}$. As for $f^{-1}(v_0)$, we can order the vertices in $f^{-1}(v_N)$ according to the order
  of the $y$-coordinates of the points $\gamma_w \cap \partial D_{v_N}$.

  Now consider the set $f^{-1}(v_k)$, where $0 < k < N$. Since $f$ is stable, for any pair of
  vertices $w, s \in f^{-1}(v_k)$, one of the sets $\gamma_w$ and $\gamma_s$ is a simple curve
  connecting a point on the left side of $D_{v_k}$ and a point on its right side, and intersecting
  the boundary of $D_{v_k}$ only at these two points. At the same time, the other set is clearly
  path-connected, and it intersects the boundary of $D_{v_k}$ in a finite number of points, these
  points will be referred to as ``endpoints''. Therefore, by applying~\cref{lemma:square} we get
  that the $y$-coordinates of the endpoints of $\gamma_w$ are all larger or all smaller than the
  $y$-coordinates of the endpoints of $\gamma_s$ once we compare only the pairs of points lying on
  the same side of $D_{v_k}$. Hence we can define a linear order $\prec_k$ by letting $w \prec_k s$
  if all the endpoints of $\gamma_w$ lying on the left and right sides of $D_{v_k}$ have the smaller
  $y$-coordinates than all the endpoint of $\gamma_s$ lying on the same sides.

  Finally, we are going to prove that $\{ \prec_k,\ 0 \leq k \leq N \}$ forms an admissible
  collection of linear orders, and therefore, $|f|$ lifts to an embedding according
  to~\cref{thm:orders}. Indeed, take a pair of edges $e = \{w_e, s_e\}, g = \{ w_g, s_g \}$ of $G$
  such that $f(w_e) = f(w_g) = v_k$ and $f(s_e) = f(s_g) = v_{k+1}$. Without loss of generality,
  assume $e \prec_{k, k+1} g$. This implies that the ``left'' endpoint of $\gamma_e$, lying on the
  left side of $D_{\{v_k, v_{k+1}\}}$, has a smaller $y$-coordinate than the ``left'' endpoint of
  $\gamma_g$. Hovewer, the ``left'' endpoints of $\gamma_e$ and $\gamma_g$ are endpoints of
  $\gamma_{w_e}$ and $\gamma_{w_g}$, lying on the same side of $D_{v_k}$; hence, $w_e \preceq_{k}
  w_g$. Repeating this reasoning for the ``right'' endpoints of $\gamma_e$ and $\gamma_g$, we
  conclude that $s_e \preceq_{k+1} s_g$. Therefore, the orders $\prec_k$ form an admissible
  collection of linear orders.
\end{proof}

The next example shows that the assumption that $f$ is stable in~\cref{thm:approx} is needed.

\begin{figure}[htb!]
  \centering
  \begin{subfigure}{.4\textwidth}
    \centering
    \begin{tikzpicture}[thick, main node/.style={circle,fill=blue!10,draw,font=\sffamily\bfseries} ]
      \node[main node] at (-2, 0) (a1) {$a_1$};
      \node[main node] at (-2, 1) (a2) {$a_2$};
      \node[main node] at (2, 0) (b1) {$b_1$};
      \node[main node] at (2, 1) (b2) {$b_2$};

      \node[main node] at (-2, -2) (a) {$a$};
      \node[main node] at (2, -2) (b) {$b$};

      \draw [thick, ->] (0, -0.5) -- (0, -1.5) node[midway,right] {$f$};

      \draw (a1) to (b1);
      \draw (b1) to (a2);
      \draw (a2) to (b2);
      \draw (b2) to (a1);
      \draw (a) to (b);
    \end{tikzpicture}
  \end{subfigure}
  \begin{subfigure}{.58\textwidth}
    \centering
    \begin{tikzpicture}[thick, main node/.style={circle,fill=blue!10,draw,font=\sffamily\bfseries}]
      \node[rectangle,fill=color0!25,minimum width=7cm,minimum height=1.9cm,draw,line width=0.1mm] (r) {};

      \node[circle,fill=color0!25,minimum size=3cm,draw,line width=0.1mm] at ([shift=(180:2.8cm)]r) (a) {};
      \node[main node] at ([shift={(90:-0.5cm)}]a) (a1) {$a_1$};
      \node[main node] at ([shift={(90:0.5cm)}]a) (a2) {$a_2$};

      \node[circle,fill=color0!25,minimum size=3cm,draw,line width=0.1mm] at ([shift={(0:2.8cm)}]r) (b) {};
      \node[main node] at ([shift={(90:-0.5cm)}]b) (b1) {$b_1$};
      \node[main node] at ([shift={(90:0.5cm)}]b) (b2) {$b_2$};

      \draw[line width=0.5mm] (a1) to (b1);
      \draw[line width=0.5mm] (b1) to (a2);
      \draw[line width=0.5mm] (a2) to (b2);
      \draw[line width=0.5mm] (b2) to[out=170,in=0] ([xshift=-4cm,yshift=0.2cm]b2) to[out=180,in=0] ([yshift=0.8cm]a2)
      to[out=180,in=90] ([xshift=-1cm]a2) to[out=-90] (a1);
    \end{tikzpicture}
    \vspace{1em}
  \end{subfigure}
  \caption{$f \from G \to I \hookrightarrow \R^2$ and its R-approximation.}\label{fig:notstable}
\end{figure}

\begin{example}
  Let $f$ be a map $G \to J$, where $G$ triangulates the circle $S^1$, that is shown on the
  left-hand side of~\cref{fig:notstable}. It is R-approximable by embeddings, see the right-hand
  side of~\cref{fig:notstable}, therefore, it is approximable by embeddings. However, it does not
  lift to an embedding, since there is a 2-obstructor for $f$ defined as the path
  $(a_1, a_2) \to (b_1, b_2) \to (a_2, a_1)$ in $G^{(2)}_f$.
\end{example}

Now we use some known results on the problem of existence of approximation by embeddings to
obtain sufficient conditions for the existence of a lifting to an embedding in the case of stable
maps from trees to segments.

\begin{definition}
  Let $f \from G \to H$ be a simplicial map between graphs, let $i \from |H| \hookrightarrow S$ be
  an embedding of $H$ into a surface $S$, and let $\HH$ be a corresponding ribbon graph.  We call a
  generic R-approximation $j \from |G| \to \Int \HH$ a
  \emph{$\Z_2$-approximation}\index{$Z_2$-approximation} if for any pair of non-adjacent edges
  $e, g \in E(G)$ (i.e., $e \cap g = \varnothing$) the set $j(|e|) \cap j(|g|)$ is finite and
  consists of an even number of points.

  $f$ is called \emph{$\Z_2$-approximable}\index{$Z_2$-approximability} if there exists a $\Z_2$-approximation of $f$.
\end{definition}

The following theorem is proved in~\cite{fulek} (in general case) and in~\cite{skop} (for the case when $T$ is a
trivalent tree):
\begin{theorem}[{\cite[Corollary 3]{fulek} and~\cite[Theorem 1.5]{skop}}]\label{thm:z2_imp_app}
  Let $f \from T \to J$ be a simplicial map from a tree $T$ to a path graph $J$, and assume that $J$ is
  embedded into $\R^2$. If $f$ is $\Z_2$-approximable then $f$ is approximable by embeddings.
\end{theorem}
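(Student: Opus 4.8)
Since this theorem is quoted verbatim from \cite[Corollary 3]{fulek} and, in the trivalent case, from \cite[Theorem 1.5]{skop}, the only ``proof'' I would actually put in the paper is a pointer to those references; what follows is a sketch of the shape their arguments take. The plan is to translate the question into the ribbon graph $\JJ$ of $i \from |J| \hookrightarrow \R^2$ and then run a Hanani--Tutte-type crossing elimination. Since $|J|$ is a segment, $\JJ$ is a disk, and, exactly as in the proof of \cref{thm:approx}, one may normalise it to $[0,2N+1]\times[-1,1]$ with vertex-disks $D_{v_k}=[2k,2k+1]\times[-1,1]$ and edge-strips $D_{\{v_k,v_{k+1}\}}=[2k+1,2k+2]\times[-1,1]$. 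A generic R-approximation $j$ is then a generic drawing of $|T|$ inside this rectangle in which each vertex $w$ of $G$ lies in $D_{f(w)}$ and each edge of $G$ either stays inside one vertex-disk or crosses exactly one strip, entering from its left side and leaving through its right side.

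First I would record the combinatorial data of the given generic $\Z_2$-approximation. Inside each strip $D_{\{v_k,v_{k+1}\}}$ the pieces $j(|e|)\cap D_{\{v_k,v_{k+1}\}}$ are simple left-to-right arcs, and two of them cross an even number of times precisely when they occur in the same vertical order on both vertical sides of the strip, so the $\Z_2$-hypothesis already pins down a side-independent linear order of these arcs. Inside each vertex-disk $D_{v_k}$ one analyses the stars $j(|\st w|)\cap D_{v_k}$ as in the proof of \cref{thm:approx}, now exploiting that $T$ is a tree so these stars carry no cycles; via \cref{lemma:square} the parity data again constrains how the endpoints on the two sides interleave. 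The core step is then a sequence of local moves turning this generic drawing into an honest R-approximation: a \emph{finger move} that drags a short sub-arc of one edge of $G$ along a sub-arc of another flips, modulo $2$, the crossing numbers of the dragged edge with all edges meeting the edge it travels along, while keeping vertices inside their disks and edges crossing their prescribed strips; because $T$ has no cycles one can order the edges of $G$ along a rooted search and apply such moves one edge at a time to make every independent pair cross zero times, after which the surviving crossings are between adjacent configurations and cancel in pairs by a \emph{Whitney trick}, leaving an embedded R-approximation. By \cref{lemma:Rapprox} this gives approximability by embeddings, which, combined with \cref{thm:approx}, is exactly what feeds into \cref{thm:complete_2obs}.

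The hard part will be twofold. The conceptual obstacle is showing that acyclicity of $T$ (or of a trivalent tree, in \cite{skop}) really does make the finger moves enough: for a general domain graph there is a residual $\Z_2$-homology class that no finger move can reach and the statement can fail, so the whole force of the hypothesis is that for a tree this class is forced to vanish --- verifying this is the substance of the cited arguments. The technical obstacle is the bookkeeping: each move must be checked to preserve membership of the vertices in the correct $D_v$, the entry and exit of every edge through the correct sides of the correct strips, and the parity of all independent crossings, so that the $\Z_2$-approximation property survives intact until the final isotopy that removes the (now necessarily even) self-intersections. I would not expect this to need any idea beyond those already present in \cite{fulek} and \cite{skop}.
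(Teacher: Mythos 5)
The paper offers no proof of this statement --- it is imported verbatim from \cite[Corollary 3]{fulek} and \cite[Theorem 1.5]{skop} and used as a black box in the proof of \cref{thm:complete_2obs} --- and you correctly treat it the same way, deferring to those references. Your accompanying sketch of the Hanani--Tutte-style crossing-elimination argument is a reasonable account of what those papers do, but since the paper itself proves nothing here, the citation is all that is required.
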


The variant of van Kampen obstruction for $\Z_2$-approximability of $f$ by embeddings described
below is taken from~\cite{skop} and~\cite{repovs_skop}.

Assume we have a simplicial map $f \from G \to H$ between graphs $G$ and $H$, and $H$ is embedded
into a surface $S$. Let $\Gc$ be the two-dimensional cubical complex
$\bigcup \{ e \times g\ |\ e, g \in E(G) \colon e \cap g = \varnothing \}$. Clearly, the
transposition $\tau \in S_2$ acts on $|\Gc|$ by $\tau(p \times q) = q \times p$. Let
$\wGc$ be a two-dimensional cell complex $\Gc / S_2$, whose cells are images of the
cubes of $\Gc$ under the quotient map. Therefore, 2-cells of $\wGc$ might be associated
with unordered pairs of disjoint edges of $G$.

Take any generic R-approximation
$j \from |G| \to \HH$ of $f$, where $\HH$ stands for a ribbon graph corresponding to the embedding
$|H| \hookrightarrow S$; by ``generic'' here we mean that $j$ is injective on $|V(G)|$, and for any
pair of edges $e, g \in E(G)$ the set $j(|e|) \cap j(|g|)$ consists of a finite number of points.

Let $\Gcf \subset \Gc$ be the subcomplex
$\bigcup \{ e \times g\ |\ e, g \in E(G) \colon e \cap g = \varnothing, f(e) \cap f(g) = \varnothing
\}$. Since $|\Gcf|$ is closed under the involution on $|\Gc|$, the induced involution on
$|\Gcf|$ is well-defined. Let $\wGcf$ be $\Gcf / S_2$.

Let us define a cellular 2-cochain $c_v \in C^2(\wGc; \Z_2)$ by putting
$c_v(e \cdot g) = |j(|e|) \cap j(|g|)| \bmod{2}$, where $e \cdot g$ is the image of $e \times g$
under the projection $\Gc \to \wGc$. Observe that $c_v$ is in fact a relative cochain lying in
$C^2(\wGc, \wGcf; \Z_2)$. Indeed, for edges $e, g \in E(G)$ with
$f(e) \cap f(g) = \varnothing$, the curves $j(|e|)$ and $j(|g|)$ lie in different discs with respect
to the chosen generic R-approximation and therefore do not intersect. Let
$v_f = [c_v] \in H^2(|\wGc|, |\wGcf|; \Z_2)$ be the corresponding cohomology class.

It can be shown that $v_f$ is well-defined by following~\cite[Lemmas 3.2--3.5]{shapiro}. These lemmas
consider the case of the classical van Kampen obstruction, but it is easy to adapt them to our
case.\footnote{Note that $v_{f}$ coincides with the classical van Kampen obstruction for graphs when $f
  \from G \to \{p\} \hookrightarrow \R^2$.}

\begin{theorem}[{\cite[Proposition 3.1]{skop}}]\label{thm:z2_approx}
  Suppose $f \from G \to H$ is a simplicial map between graphs $G$ and $H$, and $H$ is embedded into an
  oriented surface $S$. Then $v_f = 0$ if and only if $f$ is $\Z_2$-approximable.
\end{theorem}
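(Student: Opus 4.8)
The plan is to prove the two implications separately. The implication ``$f$ is $\Z_2$-approximable $\Rightarrow v_f = 0$'' is essentially formal: a $\Z_2$-approximation is in particular a generic R-approximation of $f$, and since $v_f$ does not depend on the choice of generic R-approximation used to build the cochain $c_v$ (this is the well-definedness recalled just before the theorem), I may compute $v_f$ using a $\Z_2$-approximation $j$ itself. By the defining property of a $\Z_2$-approximation, $|j(|e|) \cap j(|g|)|$ is even for every pair of non-adjacent edges $e, g \in E(G)$, so $c_v(e \cdot g) = 0$ on every $2$-cell of $\wGc$; that is, $c_v = 0$ already as a relative cochain in $C^2(\wGc, \wGcf; \Z_2)$. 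Hence $v_f = [c_v] = 0$.

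For the converse the approach is the classical ``finger move'' realisation argument, carried out inside the ribbon graph. First I would fix any generic R-approximation $j_0 \from |G| \to \Int \HH$ of $f$; one exists by a direct construction, routing each vertex into its disc, each edge lying over a vertex of $H$ inside the corresponding disc, and each edge lying over an edge of $H$ across the intervening strip exactly once, then perturbing to general position. Let $c_v \in C^2(\wGc, \wGcf; \Z_2)$ be the obstruction cochain of $j_0$. Since $v_f = [c_v] = 0$, there is a relative $1$-cochain $b \in C^1(\wGc, \wGcf; \Z_2)$ with $c_v = \delta b$. The $1$-cells of $\Gc$ are the products $e \times w$ with $e \in E(G)$, $w \in V(G)$, $w \notin e$ (together with their transposes, identified in $\wGc$), so $b$ is determined by its values $b(e \cdot w) \in \Z_2$, and I would read $b(e \cdot w) = 1$ as the instruction to push a finger of the arc $j(|e|)$ across the point $j(w)$.

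The core of the argument is the realisation lemma: performing such a finger move of $j(|e|)$ across $j(w)$ produces a new generic R-approximation whose obstruction cochain differs from the old one by exactly $\delta(\mathbf{1}_{e \cdot w})$. Indeed, this move flips the parity of $|j(|e|) \cap j(|g|)|$ precisely for the edges $g$ incident to $w$ and disjoint from $e$, and these index exactly the $2$-cells $e \cdot g$ having $e \cdot w$ as a face. Applying the moves for every $(e, w)$ with $b(e \cdot w) = 1$ replaces $c_v$ by $c_v + \delta b = 0$, so the resulting generic R-approximation is a $\Z_2$-approximation and we are done. The requirement that $b$ be a \emph{relative} cochain, vanishing on the faces of $\wGcf$, is exactly what guarantees that no finger move is performed across a vertex all of whose incident edges have images disjoint from $f(e)$; equivalently, that no move introduces an intersection between a pair $e, g$ with $f(e) \cap f(g) = \varnothing$, whose images lie in disjoint discs and strips of $\HH$ and must remain disjoint.

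The step I expect to be the main obstacle is making this realisation lemma rigorous inside the ribbon graph: one must route the finger from $j(|e|)$ into a neighbourhood of $j(w)$ entirely within $\Int \HH$, keeping $j$ a legitimate generic R-approximation in the sense of~\cref{def:rapprox} (in particular keeping $j(|e|)$ correctly threaded through the discs and strips dictated by $f$), so that the finger encircles $j(w)$ while crossing every other strand an even number of times, thereby producing exactly the prescribed coboundary and no unintended parity changes. I would model this on~\cite[Lemmas 3.2--3.5]{shapiro}, localising the picture near $j(w)$; the relative condition on $b$ ensures that the source arc and the target vertex lie in the same connected piece of $\HH$, so a suitable route exists, and the handle decomposition of $\HH$ then lets one doctor the finger so that all crossings outside a disc around $j(w)$ are created and destroyed in pairs.
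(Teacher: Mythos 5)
The paper does not prove this statement; it is imported verbatim from \cite[Proposition 3.1]{skop}, so there is no internal proof to compare against. Your first implication ($\Z_2$-approximability $\Rightarrow v_f=0$) is correct and is indeed immediate from the well-definedness of $v_f$.

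The converse, however, has a genuine gap in the realisation step. Your finger move for a $1$-cell $e\cdot w$ pushes a tongue of $j(|e|)$ around the point $j(w)$, and $j(w)$ lies in the disc $D_{f(w)}$. But a generic R-approximation confines $j(|e|)$ to $D_{f(u)}\cup D_{f(e)}\cup D_{f(v)}$ (where $e=\{u,v\}$), and these handles are \emph{disjoint} from $D_{f(w)}$ unless $f(w)$ is a vertex of the simplex $f(e)$. So the move is performable only for $1$-cells with $f(w)\in f(e)$. The relative condition on $b$ does not restrict $b$ to such cells: $b(e\cdot w)$ is forced to vanish only when \emph{some} edge $g\ni w$ disjoint from $e$ satisfies $f(g)\cap f(e)=\varnothing$. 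There remain $1$-cells $e\cdot w$ outside $\wGcf$ with $f(w)\notin f(e)$ — for instance $f(e)=\{x,y\}$, $f(w)=z$ adjacent to $x$, and every edge at $w$ mapping onto $\{z,x\}$ or $\{z,y\}$. For such a cell the point $j(w)$ is unreachable by any finger of $j(|e|)$ (the finger would have to cross the arc $D_x\cap D_{\{x,z\}}$, destroying condition (3) of the definition of R-approximation), and no local substitute exists: the strands of the edges $g\ni w$ enter $D_x$ through boundary points of $D_x$, which cannot be encircled from inside $D_x$. Your stated safeguard — that the relative condition puts the source arc and target vertex ``in the same connected piece of $\HH$'' — does not address this: $\HH$ is connected whenever $H$ is, and the obstruction is the confinement of each edge's image to its prescribed handles, not connectivity of $\HH$.

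Consequently the moves you can actually perform generate only the subgroup $R\subseteq B^2(\wGc,\wGcf;\Z_2)$ spanned by the coboundaries $\delta(\mathbf{1}_{e\cdot w})$ with $f(w)\in f(e)$, and your argument needs the additional (nontrivial, and unaddressed) claim that the given cochain $c_v$, once it is a coboundary, already lies in $R$ — equivalently, that the ``unrealisable'' elementary coboundaries can be rewritten in terms of realisable ones modulo relative cocycles. Without that lemma (or a restructuring of the argument, e.g.\ defining the obstruction on a complex whose $1$-cells are only the realisable ones), the proof of the direction $v_f=0\Rightarrow\Z_2$-approximable is incomplete. The remaining points (existence of an initial generic R-approximation, parity bookkeeping for the performable moves, keeping the arcs simple after a finger move) are fine or standard.
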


Now let us introduce a van Kampen-like obstruction for the existence of a lifting. We start with a
cochain-free definition of the corresponding cohomology class inspired by~\cite{bestvina}
and~\cite{mel_van_kampen}. Next, we present an explicit cochain representing it, defined in a way
similar to the cochain $c_{v}$ defining $v_f$.

Recall that $f$ induces the principal $S_2$-bundle $p_2 \from |G^{(2)}_f| \to |\wt{G}^{(2)}_f|$. Let
$g \from |\wt{G}^{(2)}_f| \to \RP^{\infty}$ be a map classifying it.\footnote{The reader may refer
  to~\cite{principal_bundles},~\cite{milnor_stasheff}, or~\cite{hatcher_bundles} for the needed information on
  universal bundles and classifying maps.} Let $w_1$ be the generator of
$H^1(\RP^{\infty}; \Z_2) \cong \Z_2$. We define $w_f \in H^1(|\wt{G}^{(2)}_f|, \Z_2)$ as the pullback
$g^* w_1$.\footnote{{See~\cite[Theorem 7.1]{milnor_stasheff} for a computation of the ring
  $H^\bullet(\RP^\infty; Z_2)$.}}

It follows from the definition that $w_f$ is the first Stiefel–Whitney class of the one-dimensional
real vector bundle associated with the covering $p_2$.\footnote{See~\cite[Theorem
  3.1]{hatcher_bundles} or~\cite[Theorem 7.1]{milnor_stasheff}.} Indeed, this vector bundle is a
pullback under $g$ of the canonical line bundle over $\RP^\infty$, and the latter bundle is
associated with the universal bundle $S^\infty \to \RP^\infty$. Therefore, $w_f = 0$ if and only if
the bundle associated with $p_2$ is orientable, that is, admits a non-vanishing section. Hence
$w_f = 0$ if and only if $p_2$ is trivial.

Now let $\wt{f} \from |G| \to |H| \times \R$ be a generic lifting of $f$; as before, the word
``generic'' means that $\wt{f}$ is injective on $V(G)$, and for any pair of edges $e, g \in E(G)$,
the set $\wt{f}(|e|) \cap \wt{f}(|g|)$ consists of a finite number of points. Let $c_w$ be a cochain
in $C^1(\wt{G}^{(2)}_f; \Z_2)$ that takes the value $|\wt{f}(|e|) \cap \wt{f}(|g|)| \bmod{2}$ on an edge $e
\cdot g \in E(\wt{G}^{(2)}_f)$, where by $e \cdot g$ we denote the edge of $\wt{G}^{(2)}_f$
corresponding to an unordered pair of edges $e$ and $g$.

\begin{lemma*}
  $w_f = [c_w]$.
\end{lemma*}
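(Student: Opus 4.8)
The plan is to use the description of $w_f$, recalled just above the lemma, as the first Stiefel--Whitney class $w_1(\ell)$ of the real line bundle $\ell$ associated with the double cover $p_2 \from |G^{(2)}_f| \to |\wt{G}^{(2)}_f|$. Since $\wt{G}^{(2)}_f$ is one-dimensional, $c_w$ is automatically a cocycle, and since $\Z_2$ is a field, $H^1(|\wt{G}^{(2)}_f|; \Z_2)$ is naturally identified with $\mathrm{Hom}(H_1(|\wt{G}^{(2)}_f|; \Z_2), \Z_2)$; the cycle space $H_1(|\wt{G}^{(2)}_f|; \Z_2)$ of the graph $\wt{G}^{(2)}_f$ is spanned by classes of simple cycles. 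So it suffices to prove $\langle w_f, [\gamma]\rangle = \langle [c_w], [\gamma]\rangle$ for every simple cycle $\gamma$ in $\wt{G}^{(2)}_f$. On the one hand, $\langle w_f, [\gamma]\rangle$ is the monodromy of $p_2$ along $\gamma$, which is $0$ exactly when $\gamma$ lifts to a loop in $|G^{(2)}_f|$. On the other hand, $\langle [c_w], [\gamma]\rangle = \sum_{\varepsilon} c_w(\varepsilon) \bmod 2$, summed over the edges $\varepsilon$ of $\gamma$. I would compare the two using the height function $h = \pr_\R \wt{f} \from |G| \to \R$, noting first that because $\wt{f}$ is injective on $V(G)$ and $f$ is non-degenerate, $h(x) \neq h(y)$ whenever $x \neq y$ are vertices of $G$ with $f(x) = f(y)$.

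Write $\gamma$ as a cyclic sequence $u_0, \varepsilon_1, u_1, \dots, \varepsilon_k, u_k = u_0$, where each $u_i = \{x_i, y_i\}$ is a vertex of $\wt{G}^{(2)}_f$ and each $\varepsilon_i$ is an unordered pair $\{e_i, e_i'\}$ of disjoint edges of $G$ with $f(e_i) = f(e_i')$, with pairs of endpoints $u_{i-1}$ and $u_i$. Fixing an ordering $(x_0, y_0)$ of $u_0$ and propagating (along $\varepsilon_i$ the edge of $\{e_i, e_i'\}$ containing $x_{i-1}$ has $x_i$ as its other endpoint, and the remaining edge determines $y_i$) produces the ordered lift of $\gamma$ to a walk in $G^{(2)}_f$ ending at $(x_k, y_k) \in \{(x_0, y_0), (y_0, x_0)\}$, so $\langle w_f, [\gamma]\rangle = 0$ iff $(x_k, y_k) = (x_0, y_0)$. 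Setting $s_i = 1$ if $h(x_i) > h(y_i)$ and $s_i = 0$ otherwise, this happens iff $s_k = s_0$ iff $\#\{i : s_{i-1} \neq s_i\}$ is even; hence $\langle w_f, [\gamma]\rangle \equiv \#\{i : s_{i-1} \neq s_i\} \pmod 2$. The crux is then the local identity $c_w(\varepsilon_i) \equiv s_{i-1} + s_i \pmod 2$. To prove it, relabel so $e_i$ joins $x_{i-1}$ to $x_i$ and $e_i'$ joins $y_{i-1}$ to $y_i$, and identify the common image edge $f(|e_i|) = f(|e_i'|)$ with $[0,1]$ so that $f(x_{i-1}) = f(y_{i-1})$ is $0$ and $f(x_i) = f(y_i)$ is $1$; non-degeneracy of $f$ makes $\wt{f}(|e_i|)$ and $\wt{f}(|e_i'|)$ graphs of piecewise linear functions $\varphi_i, \psi_i \from [0,1] \to \R$ with $\varphi_i(0) = h(x_{i-1})$, $\varphi_i(1) = h(x_i)$, $\psi_i(0) = h(y_{i-1})$, $\psi_i(1) = h(y_i)$. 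Then $\wt{f}(|e_i|) \cap \wt{f}(|e_i'|)$ is in bijection with the zero set of $\varphi_i - \psi_i$, which by genericity of $\wt{f}$ consists of finitely many transverse (sign-changing) zeros, all interior since $\varphi_i(0) \neq \psi_i(0)$ and $\varphi_i(1) \neq \psi_i(1)$; hence $c_w(\varepsilon_i) = |\wt{f}(|e_i|) \cap \wt{f}(|e_i'|)| \bmod 2$ equals $1$ exactly when $\varphi_i - \psi_i$ changes sign between $0$ and $1$, i.e.\ when $s_{i-1} \neq s_i$. Summing over $i$ gives $\langle [c_w], [\gamma]\rangle = \langle w_f, [\gamma]\rangle$, and letting $\gamma$ range over simple cycles yields $[c_w] = w_f$.

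The main obstacle is the bookkeeping in the local identity: one must keep straight the correspondence between an edge $e \cdot g$ of $\wt{G}^{(2)}_f$ and an unordered pair of disjoint edges of $G$ with a common image, match its two endpoints with the two ordered preimage-pairs appearing along the chosen lift, and verify that over the common image edge the two lifted arcs are genuinely graphical (using non-degeneracy of $f$) with distinct endpoints over $0$ and over $1$ (using injectivity of $\wt{f}$ on vertices). The only analytic input is the elementary fact that a piecewise linear function on $[0,1]$ that is nonzero at the endpoints and in general position has an odd number of zeros iff its endpoint values have opposite signs; this is where genericity of $\wt{f}$ enters, and without it the parity of $|\wt{f}(|e|) \cap \wt{f}(|g|)|$ need not be correct. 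An alternative that avoids cycles altogether: the equivariant function $(x,y) \mapsto h(x) - h(y)$ on $V(G^{(2)}_f)$ is nowhere zero there, so it extends to a section of $\ell$ transverse to the zero section whose zero locus over each edge $e \cdot g$ is exactly $\wt{f}(|e|) \cap \wt{f}(|g|)$; then $c_w$ is by construction the obstruction cochain to that section being nowhere zero, which is a cocycle representing $w_1(\ell) = w_f$.
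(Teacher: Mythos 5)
Your proof is correct, and it takes a genuinely different route from the paper's. The paper lifts $\wt{f}$ further to an embedding $F \from |G| \to |H| \times \R \times \R$, uses the induced direction map $\rho \from |\wt{G}^{(2)}_f| \to \RP^1$ as a classifying map for $p_2$, and identifies both $w_f([C])$ and $[c_w]([C])$ with the degree mod~2 of $\rho \circ \psi_C$; the whole comparison is packaged into one degree computation. You instead evaluate both classes on simple cycles (the same reduction via the universal coefficient theorem over $\Z_2$), compute $\langle w_f, [\gamma]\rangle$ as the monodromy of the double cover by explicitly propagating an ordering along the lift, and reduce everything to the local telescoping identity $c_w(\varepsilon_i) \equiv s_{i-1} + s_i$, proved by counting sign changes of the piecewise linear function $\varphi_i - \psi_i$ over the common image edge. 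Your argument is more elementary and self-contained: it avoids the auxiliary embedding $F$, the classifying map, and degree theory, at the cost of more explicit bookkeeping with the ordered lift. Both arguments share the same hidden hypothesis, which you correctly flag: the paper's notion of a ``generic'' lifting (injectivity on $V(G)$ plus finiteness of $\wt{f}(|e|) \cap \wt{f}(|g|)$) does not by itself exclude non-transverse (touching) intersections, for which the parity count would be wrong; the paper buries this in the clause ``we can choose $F$ so that $\rho$ is a local homeomorphism,'' which likewise presupposes that $h(p_t) - h(q_t)$ changes sign at each intersection. So your explicit transversality assumption is not an extra gap relative to the paper, and your concluding obstruction-theoretic remark (viewing $c_w$ as the obstruction cochain for a section of the associated line bundle) is a valid alternative that parallels the paper's own ``sketch of a geometrical proof'' of the subsequent theorem.
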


\begin{proof}
  Let $F \from |G| \to |H| \times \R \times \R$ be an embedding such that
  $\wt{f} = \pr_{|H| \times \R} \circ F$, and let $\rho \from |\wt{G}^{(2)}_f| \to \RP^1$ be a map
  that takes an unordered pair $\{p, q\}$ of points of $|G|$ to an equivalence class of lines
  parallel to a vector going from the point $\pr_{\R \times \R}(F(q))$ to the point
  $\pr_{\R \times \R}(F(p))$.

  Clearly, we can choose $F$ so that $\rho$ is a local homeomorphism at each
  $\{p, q\} \in |\wt{G}^{(2)}_f|$ where $\wt{f}(p) = \wt{f}(q)$.

  It can be seen that $p_2$ is the pullback of the universal $S_2$-bundle under the composition
  $i \circ \rho$, where $i$ is the inclusion $\RP^1 \hookrightarrow \RP^\infty$; indeed, it follows
  from the universal property of pullbacks that there is a morphism between principal $S_2$-bundles
  $(i \circ \rho)^* (S^\infty \to \RP^\infty)$ and $p_2$, which must be an isomorphism by~\cite[Proposition
  2.1]{principal_bundles}.

  As the embedding $i \from \RP^1 \hookrightarrow \RP^\infty$ induces an isomorphism
  $H^1(\RP^\infty; \Z_2) \cong H^1(\RP^1; \Z_2)$,\footnote{The induced map of (cellular) cochain
    complexes is essentially truncating the cochains of dimension two and higher. Since all
    differentials in both complexes are zero, we obtain the desired isomorphism.}
  $w_f = \rho^* w_1$, where $w_1$ is the generator of $H^1(\RP^1; \Z_2)$.

  Note that
  $\rho^{-1}([0 \times \R]) = \left\{\{p, q\} \mid \wt{f}(p) = \wt{f}(q)\right\} \subset
  |\wt{G}^{(2)}_f|$.  Let us consider a (simplicial) cycle $C$. Clearly, there is an Euler cycle
  going through the edges belonging to $C$.\footnote{Recall that an Euler cycle is a cycle that
    visits every edge exactly once. In our case, it visits each edge in $C$ exactly once and does
    not include other edges.} Let $\psi_{C}$ be a non-degenerate piecewise linear map
  $\psi_{C} \from S^{1} \to |\wt{G}^{(2)}_f|$ realising it. According to the definition of $c_{w}$,
  \[
    c_{w}(C) = \left|\psi_{C}^{-1}\left(\left\{\{p, q\} \mid \wt{f}(p) = \wt{f}(q)\right\}\right) \right| \bmod{2} = \left| (\rho
      \circ \psi_{C})^{-1}(\lbrack 0 \times \R \rbrack) \right| \bmod{2}.
  \]
  Since $\rho$ is a local homeomorphism at the points $\{p, q\}$ where $\wt{f}(p) = \wt{f}(q)$, and
  $\psi_{C}$ is a local homeomorphism at the preimages of the interior points of the edges of
  $\wt{G}^{(2)}_f$, the last number is, in fact, the sum of the local degrees modulo 2 at the
  preimages of $[0 \times \R]$. Therefore, $[c_{w}]([C])$ is the degree modulo 2 of the map
  $\rho \circ \psi_{C}$.

  On the other hand, $w_{f}(\lbrack C \rbrack) = w_{1}(\lbrack \rho \circ \psi_{C}(S^{1}) \rbrack)$
  is also equal to the degree modulo 2 of the map $\rho \circ \psi_{C}$. Thus,
  $w_{f} = \lbrack c_{w} \rbrack$.\footnote{Note that
    $H^{1}(X;\Z_{2}) \cong \mathrm{Hom}_{\Z_{2}}(H_{1}(X;\Z_{2}), \Z_{2})$ by the universal
    coefficient theorem.}
\end{proof}

Now, we will prove the following theorem:

\begin{theorem}\label{thm:cohomology_equivalence}
  Let $f \from G \to J$ be a stable simplicial map from a graph to a path graph. Then $w_f = 0$ if
  and only if $v_f = 0$.
\end{theorem}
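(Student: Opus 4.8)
The statement relates two van Kampen–type obstructions for the same stable map $f \from G \to J$: the lifting obstruction $w_f \in H^1(|\wt G^{(2)}_f|; \Z_2)$, whose vanishing (as established right before the theorem) is equivalent to the triviality of $p_2$, and the $\Z_2$-approximability obstruction $v_f \in H^2(|\wGc|, |\wGcf|; \Z_2)$ from~\cref{thm:z2_approx}. My plan is to prove the equivalence $w_f = 0 \iff v_f = 0$ by chasing both through the intermediate geometric notions already set up in the section: $w_f = 0 \iff p_2$ trivial $\iff$ no $2$-obstructors (by~\cref{thm:obstructors} and~\cref{lemma:equiv_obstr}, or directly by the Lemma preceding this theorem), and on the other side $v_f = 0 \iff f$ is $\Z_2$-approximable (\cref{thm:z2_approx}) $\iff f$ is approximable by embeddings (\cref{thm:z2_imp_app}, using that $J$ is a path graph so its underlying graph is a tree — here I will need the hypothesis that the domain is a tree, or reduce to it) $\iff |f|$ lifts to an embedding (\cref{thm:approx}, using stability and that $J$ embeds in $\R^2$) $\iff$ (again) $p_2$ trivial. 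So the cleanest route is: show both conditions are separately equivalent to "$|f|$ lifts to an embedding," and invoke~\cref{thm:obstructors} to tie "$|f|$ lifts" to "$p_2$ trivial," which is "$w_f = 0$."

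\textbf{Organizing the chain.} Concretely I would argue as follows. First, $v_f = 0 \iff f$ is $\Z_2$-approximable by~\cref{thm:z2_approx} (noting $\R^2$ is an oriented surface, so the hypothesis is met). Second, since the target is a path graph $J$ embedded in $\R^2$, I invoke~\cref{thm:z2_imp_app} to get that $\Z_2$-approximability implies approximability by embeddings; the converse (approximable $\Rightarrow$ $\Z_2$-approximable) is immediate because an honest embedding has empty, hence even, self-intersection on non-adjacent edges, so these two notions coincide here. Third, $f$ is approximable by embeddings $\iff |f|$ lifts to an embedding by~\cref{thm:approx}, which applies because $f$ is stable and $J$ is a path graph embedded in $\R^2$. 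Fourth, by~\cref{thm:obstructors}\ref{thm:obstructors:first}, if $|f|$ lifts then $p_2$ is trivial, and conversely the Lemma just proved above (together with~\cref{thm:orders}, or more directly the constructions in the section) shows triviality of $p_2$ is enough in this tree-to-segment situation — but in fact I only need the clean equivalence $w_f = 0 \iff p_2$ trivial, which was established immediately before the theorem, and then the equivalence $p_2$ trivial $\iff |f|$ lifts, which for stable maps to a segment I extract by combining~\cref{thm:approx} with the $\Z_2$-side: triviality of $p_2$ forces $v_f = 0$ via the chain run backwards. Assembling, $w_f = 0 \iff p_2$ trivial $\iff |f|$ lifts $\iff f$ approximable by embeddings $\iff f$ $\Z_2$-approximable $\iff v_f = 0$.

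\textbf{The delicate point.} The one place where care is genuinely needed is that~\cref{thm:z2_imp_app} is stated for maps $f \from T \to J$ \emph{with $T$ a tree}, whereas~\cref{thm:cohomology_equivalence} allows $f \from G \to J$ with $G$ an arbitrary graph. I expect this to be the main obstacle, and I see two ways around it. The first is to observe that for the purpose of both obstructions one may restrict to the subgraph of $G$ carrying the multiple points, and that stability plus the target being a segment constrains the relevant part of $G$ enough that the argument goes through; but more honestly, the cleanest fix is to note that we do \emph{not} actually need the full strength of~\cref{thm:z2_imp_app} in both directions — we only need the single implication "$\Z_2$-approximable $\Rightarrow$ approximable by embeddings," and if $G$ is not a tree I can still run the argument after checking that a cycle in $G$ mapping to $J$ (which is contractible) bounds, so that the obstruction-theoretic content localizes onto a spanning forest; alternatively, and most safely, I would state and use the fact that a stable simplicial map to a path graph whose domain contains an essential cycle automatically has a $2$-obstructor (the cycle, doubled, realizes a transposition), so that in the non-tree case both $w_f \neq 0$ and $v_f \neq 0$ and the equivalence holds trivially, reducing the substantive case to $G = T$ a tree where~\cref{thm:z2_imp_app} applies directly.

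\textbf{Remark on an alternative.} A second, more self-contained approach would be to build an explicit chain map between the relevant (co)chain complexes carrying $c_w$ to $c_v$ (or a cup-square thereof), mirroring the way the classical van Kampen obstruction for a graph $G$ is recovered from $v_f$ when $H$ is a point. This would avoid the tree hypothesis entirely and make the relation between $w_f$ and $v_f$ transparent at the cochain level. However, it requires identifying how $|\wt G^{(2)}_f|$ sits relative to $|\wGc|$ via the R-approximation, which is exactly the geometric content already packaged in~\cref{thm:approx} and~\cref{lemma:square}; so in the interest of brevity I would present the modular argument above and only sketch this cochain-level picture as motivation.
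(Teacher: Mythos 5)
Your chain $v_f = 0 \iff \Z_2$-approximable $\iff$ approximable $\iff$ lifts $\iff p_2$ trivial $\iff w_f = 0$ breaks down at the link ``$p_2$ trivial $\Rightarrow |f|$ lifts'', and this breakdown is fatal for the direction $w_f = 0 \Rightarrow v_f = 0$. Nothing proved before this theorem gives that implication: \cref{thm:obstructors} gives only the converse, \cref{thm:gamma_cond} shows that in general one additionally needs $\Gamma_f$ to be satisfiable (and Giller's example and \cref{ex:nontrivial_gamma} show the implication is genuinely false without further hypotheses). In the paper, ``$p_2$ trivial $\Rightarrow |f|$ lifts'' for stable maps to a path graph is exactly \cref{thm:complete_2obs}, whose proof \emph{uses} \cref{thm:cohomology_equivalence}. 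Your own text concedes the point: you propose to obtain ``$p_2$ trivial $\Rightarrow$ lifts'' by running ``the chain backwards'', i.e.\ by first passing from $w_f=0$ to $v_f=0$ --- which is precisely the implication you are trying to prove. So one direction of your argument is circular. The paper avoids this by working entirely at the cochain level: it embeds $G^{(2)}_f$ into $\Gc$ as the diagonals of the ``full'' squares, analyses the local structure of $\Gc$ near $|G^{(2)}_f|$ (books, pages, spine edges --- this is where stability enters), and explicitly converts a primitive $c'_w$ of $c_w$ into a primitive $c'_v$ of $c_v$ and back; morally this is a Thom isomorphism for the normal disc bundle of $|\wt{G}^{(2)}_f|$ in $|\wGc|$ relative to $|\wGcf|$. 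Some such direct comparison of the two cochains is unavoidable, because the logical role of this theorem in the paper is precisely to supply the missing implication in your chain.

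A secondary gap: \cref{thm:z2_imp_app} requires the domain to be a tree, whereas the theorem allows an arbitrary graph $G$, so even the non-circular direction $v_f = 0 \Rightarrow w_f = 0$ of your argument only covers the tree case. Your proposed repair --- that a cycle in $G$ forces a $2$-obstructor, whence ``both $w_f \neq 0$ and $v_f \neq 0$'' --- is not established and its second half is particularly delicate: the non-stable example of \cref{fig:notstable} is a map with a $2$-obstructor (so $w_f \neq 0$) that is approximable, hence $\Z_2$-approximable, hence has $v_f = 0$. So ``$2$-obstructor $\Rightarrow v_f \neq 0$'' is false in general and would itself need the full strength of the stability analysis you are trying to avoid. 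If you want to salvage a modular proof, you must either prove the cochain-level comparison directly (as the paper does) or find an independent proof that for stable maps to a path graph triviality of $p_2$ already forces $\Gamma_f$ to be satisfiable; as written, the proposal does neither.
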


\begin{proof}[Sketch of a geometrical proof]
  Take a small neighborhood of $|\wt{G}^{(2)}_f|$ in $|\wGc|$ homeomorphic to a one-dimensional disc
  bundle over $|\wt{G}^{(2)}_f|$. After taking suitable triangulations, we can observe that the
  total space of the bundle is the closure of the complement of $|\wGcf|$ in $|\wGc|$
  (see~\cref{fig:book}). Therefore, the cohomology class $v_f$ belongs to the second cohomology
  group of the Thom space of this bundle. Furthermore, it can be shown that the Thom isomorphism
  maps $w_f$ to $v_f$.
\end{proof}

\begin{proof}[Cochain-level proof]
  Let $v_0, \dots, v_N$ be the vertices of $J$. By replacing the triangulations of $G$ and $J$ with
  their subdivisions, we can assume that if $f^{-1}(v_i)$ contains a non-regular vertex, then all the
  preimages of $v_{i-1}$ and $v_{i+1}$ are regular.

  Let $\wt{|f|} \from |G| \to |J| \times \R$ be a generic lifting of $f$. By choosing
  $S_\varepsilon$ as in the proof of~\cref{thm:approx}, we obtain a generic approximation of
  $f$.

  Note that the cellular cochains in $C^\bullet(\wGc, \wGcf; \Z_2)$ and the simplicial cochains in
  $C^\bullet(\wt{G}^{(2)}_f; \Z_2)$ could be interpreted as, respectively, cubical cochains in
  $C^\bullet(\Gc, \Gcf; \Z_2)$ and simplicial cochains in $C^\bullet(G^{(2)}_f; \Z_2)$, equivariant
  (as functions) with respect to the involution. Thus, let $c_v \in C^2(\Gc, \Gcf; \Z_2)$ be the
  equivariant cubical 2-cochain defining $v_f$ and corresponding to the generic approximation, and
  let $c_w \in C^\bullet(G^{(2)}_f; \Z_2)$ be the equivariant simplicial 1-cochain defining $w_f$
  and corresponding to $\wt{f}$.

  Note that $G^{(2)}_f$ can be naturally embedded into $\Gc$ as the diagonals of the squares
  $(a, b) \times (c, d)$ where $f(a) = f(c)$ and $f(b) = f(d)$. It follows from the definition of
  $c_w$ and $c_v$ that $c_v$ takes the value 1 on exactly those squares that contain a diagonal
  lying in $G^{(2)}_f$ on which $c_w$ takes 1. The opposite is also true: $c_w$ takes the value 1 on
  the diagonals of those squares on which $c_v$ takes the value 1.

  It can be observed that $|G^{(2)}_f|$ lies in the complement of $|\Gcf|$ in $|\Gc|$.
  Furthermore, as a subcomplex of $\Gc$, $\Gcf$ precisely consists of those squares
  and edges that do not intersect $|G^{(2)}_f|$.

  Consider two regular vertices $a, b \in V(G)$ with the same image $f(a) = f(b) = v_i$ lying in the
  interior of $J$ (i.e.,~$0 < i < N$). Let $a$ be incident to edges $e_1$ and $e_2$, and $b$
  be incident to edges $g_1$ and $h_1$, where $f(g_1) = f(e_1)$ and $f(h_1) = f(e_2)$. The star of
  $a \times b$ in $\Gc$ consists of a large square divided into four smaller squares
  $\{e_1, e_2\} \times \{g_1, h_1\}$.\footnote{The star of a vertex
    $v$ in a cubical complex $C$ is the union of cubes of $C$ that have $v$ as a vertex.} Observe
  also that $G^{(2)}_f$ forms a diagonal of the larger square composed of diagonals of
  $e_1 \times g_1$ and $e_2 \times h_1$.

  \begin{figure}[h!]
    \centering
    \raisebox{-0.5\height}{
    \begin{tikzpicture}[thick, main
      node/.style={circle,fill=blue!10,draw,minimum size=0.8cm,font=\sffamily\scriptsize\bfseries}]
      \node[main node] (b) at (0, 0) {$b$};
      \node[main node] (a) at (0, 2) {$a$};

      \node[main node] (v2) at ([xshift=2cm]a) {$v_2$};
      \node[main node] (v1) at ([xshift=-2cm]a) {$v_1$};

      \node[main node] (c1) at ([xshift=2cm,yshift=0.5cm]b) {$c_1$};
      \node[main node] (c2) at ([xshift=2cm,yshift=-0.5cm]b) {$c_2$};

      \node[main node] (d1) at ([xshift=-2cm,yshift=1cm]b) {$d_1$};
      \node[main node] (d2) at ([xshift=-2cm]b) {$d_2$};
      \node[main node] (d3) at ([xshift=-2cm,yshift=-1cm]b) {$d_3$};

      \draw (a) -- (v1);
      \draw (a) -- (v2);
      \draw (b) -- (d1);
      \draw (b) -- (d2);
      \draw (b) -- (d3);
      \draw (b) -- (c1);
      \draw (b) -- (c2);
    \end{tikzpicture}}
    \hspace{1em}
    \raisebox{-0.5\height}{\includegraphics[width=.65\linewidth,height=10cm,keepaspectratio]{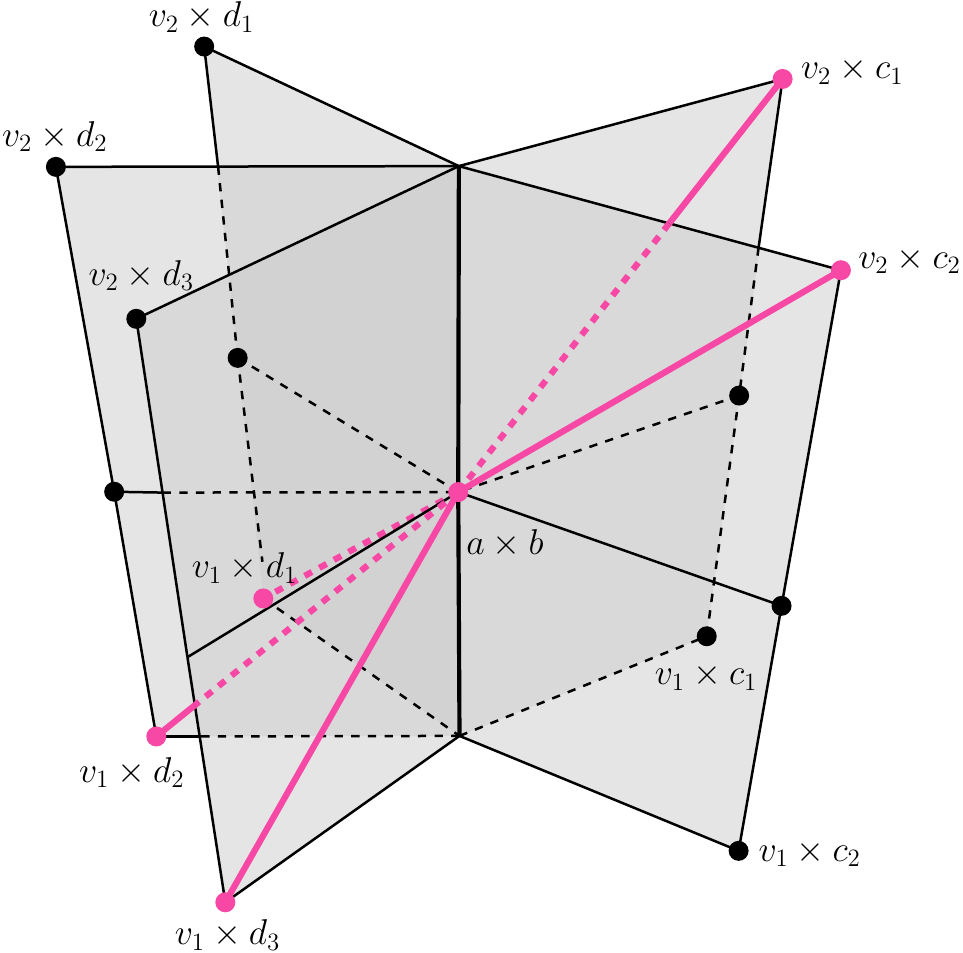}}
    \caption{The neighborhood of $a \times b$ in $\protect\Gc$. The graph $G^{(2)}_f$ is represented by
      the pink lines.}\label{fig:book}
  \end{figure}

  Now, let us consider the case where $b$ is a non-regular vertex (the case where $a$ is non-regular
  is symmetrical). We can assume that $b$ is incident to edges $g_1, \dots, g_k, h_1, \dots, h_l$,
  and $a$ is incident to edges $e_1$ and $e_2$ such that $f(g_1) = f(g_2) = \dots = f(g_k) = f(e_1)$
  and $f(h_1) = f(h_2) = \dots = f(h_l) = f(e_2)$. As illustrated in~\cref{fig:book}, the star of
  $a \times b$ in $\Gc$ is a $(k+l)$-page book, with the spine consisting of two segments, namely
  $e_1 \times b$ and $e_2 \times b$. Each page corresponds to an edge
  $v \in \{g_1, \dots, g_k, h_1, \dots, h_l\}$ that is incident to $b$ and consists of two
  half-pages, namely $e_1 \times v$ and $e_2 \times v$. Regarding $G^{(2)}_f$ around $a \times b$,
  it consists of $k+l$ diagonals of half-pages, one in each page. Moreover, in the pages
  $\{e_1, e_2\} \times g_i$, those are diagonals of the half-pages $e_1 \times g_i$, while in
  the pages $\{e_1, e_2\} \times h_i$, those are diagonals of the half-pages $e_2 \times h_i$.

  If $a, b \in f^{-1}(v_0)$ or $a, b \in f^{-1}(v_N)$, both $a$ and $b$ are regular degree one
  vertices. Therefore, the star of $a \times b$ in $\Gc$ is a single square formed by
  the edges incident to them, and $G^{(2)}_f$ is a diagonal of that square.

  Hence, the closure of the complement of $\Gcf$ in $\Gc$ can be seen as a collection of books
  connected with strips running along the graph $G^{(2)}_f$. Note that $\Gc$ does not contain any
  edges where both endpoints belong to $G^{(2)}_f$.

  For each $a \times b \in V(G^{(2)}_f)$, we choose a subset $E(a \times b)$ of edges incident with
  $a \times b$ in $\Gc$ as follows. When $a \times b$ is formed by a pair of regular degree one
  vertices, we simply select one of the two edges incident with $a \times b$, no matter which one,
  and put it in $E(a \times b)$. In the case where $a$ is a regular degree two vertex, and $b$ is a
  regular or non-regular vertex, we define $E(a \times b)$ as either
  $\{a \times h_1, \dots, a \times h_l, e_1 \times b \}$ or
  $\{a \times g_1, \dots, a \times g_k, e_2 \times b \}$, using the notation provided earlier. The
  selection of $E(b \times a)$ is done symmetrically with respect to the involution, meaning that
  $E(b \times a) = \tau(E(a \times b)) = \{ y \times x\ |\ x \times y \in E(a \times b) \}$.

  We refer to the squares of $\Gc$ that contain the edges of $G^{(2)}_f$ (on the diagonal) as
  full squares, while the squares that do not contain such edges are referred to as empty
  squares.

  An important observation is that for any vertex $a \times b \in V(G^{(2)}_f)$, the following holds
  for the squares of its star:
  \begin{enumerate}
  \item for a full square, exactly one of its sides lies in $E(a \times b)$,
  \item for an empty square, either zero or two of its sides lie in $E(a \times b)$.
  \end{enumerate}
  This property is evident for pairs of regular degree one vertices. For pairs of other types, it
  can be verified through direct calculations, considering the explicit description provided earlier
  of how $G^{(2)}_f$ intersects the star of $a \times b$.

  Suppose $w_f = 0$, and, therefore, there exists an equivariant 0-cocycle
  $c'_w \in C^0(G^{(2)}_f; \Z_2)$ with $\delta c'_w = c_w$. Let us introduce an equivariant
  1-cocycle $c'_v \in C^1(\Gc, \Gcf; \Z_2)$.

  For any edge $x \times y$ incident with $a \times b$ and belonging to $E(a \times b)$, we define
  $c'_v(x \times y) = c'_w(a \times b)$; for other edges, we set $c'_v(x \times y) = 0$. Since the
  sets $E(a \times b)$ are compatible with the involution, and given that $c'_w$ is equivariant, it
  follows that $c'_v$ is equivariant as well. Additionally, note that $c'_v$ assigns zero values to
  edges not intersecting $G^{(2)}_f$. Therefore, $c'_v$ is well-defined.

  Based on the properties of the sets $E(a \times b)$ discussed above, we observe the following:
  \begin{enumerate}
  \item On empty squares, the coboundary $\delta c'_v$ evaluates to zero. This is because each empty
    square contains an even number of edges on which $c'_v$ takes the same value.
  \item On full squares, the value of $\delta c'_v$ is equal to the value of $c_w$ on the
    diagonal. Indeed, a full square contains one edge from each $E(a \times b)$ and
    $E(a' \times b')$, where $a \times b$ and $a' \times b'$ are the endpoints of the diagonal
    belonging to $G^{(2)}_f$. Consequently, $\delta c'_v$ sums up the values of $c'_w$ on
    $a \times b$ and $a' \times b'$. Since $\delta c'_w = c_w$, their sum is equal to the value of
    $c_w$ on the diagonal.
  \end{enumerate}

  Thus, we have $\delta c'_v = c_v$, implying that $v_f = 0$.

  Conversely, assume that $v_f = 0$. Let $c'_v \in C^1(\Gc, \Gcf; \Z_2)$ be an
  equivariant 1-cochain such that $\delta c'_v = c_v$. We define an equivariant cochain
  $c'_w \in C^0(G^{(2)}_f; \Z_2)$ on the vertices $a \times b$ of $G^{(2)}_f$ as follows:
  \begin{enumerate}
  \item If $a$ and $b$ are regular vertices of degree one, the value of $c'_w$ on $a \times b$ is the
    sum of the values of $c'_v$ on the two edges incident to $a \times b$.
  \item Otherwise, it is the sum of the values of $c'_v$ on the two ``spine edges'', which are the edges
    forming the spine of the star of $a \times b$ (as we have discussed above, geometrically, this
    star is a book). In the case when both $a$ and $b$ are regular degree two vertices, it is
    important to carefully choose the ``spine edges'' for $a \times b$, so that our choice is
    compatible with the involution. This means that if $x \times y$ and $x' \times y'$ are the ``spine
    edges'' for $a \times b$, then the edges $y \times x$ and $y' \times x'$ must be the ``spine edges''
    for $b \times a$.
  \end{enumerate}

  As mentioned earlier, the values of $c_v$ on squares coincide with the values of $c_w$ on their
  diagonals. Thus, to establish $\delta c'_w = c_w$, it suffices to show that
  $c'_w(a \times b) + c'_w(a' \times b')$ equals the sum of the values of $c'_v$ on the edges of the
  full square containing a diagonal $\{a \times b, a' \times b'\} \in E(G^{(2)}_f)$.

  We claim that for a vertex $a \times b$ of $G^{(2)}_f$ and a full square $S$ in its star, the value of
  $c'_w$ on $a \times b$ equals the sum of the values of $c'_{v}$ on the two edges of $S$ incident
  to $a \times b$. If $a$ and $b$ are regular vertices of degree one, this follows from the definition
  of $c'_{w}$.

  Otherwise, $S$ contains exactly one of the two spine edges of $a \times b$ as a side.  Let us
  denote this edge by $e_+$, and let the other side of $S$ incident to $a \times b$ be denoted by
  $g$. Together with an empty square $S'$, $S$ forms a full page in the book of $a \times b$. The
  other spine edge $e_-$ and $g$ are the sides of $S'$ incident to $a \times b$. The other two sides
  of $S'$ do not intersect $G^{(2)}_f$, thus, $c'_v$ takes the value zero on them. Consequently,
  since $0 = c_v(S') = \delta c'_v(S') = c'_v(e_-) + c'_v(g)$, the cochain $c'_v$ takes the same
  value on the edges $g$ and $e_-$. Therefore,
  $c'_w(a \times b) = c'_v(e_+) + c'_v(e_-) = c'_v(e_+) + c'_v(g)$, where $e_+$ and $g$ are the
  sides of $S$ incident to $a \times b$.

  Therefore, for every full square $S$ containing a diagonal
  $\{a \times b, a' \times b'\} \in G^{(2)}_f$, we conclude that
  $c'_w(a \times b) + c'_w(a' \times b')$ equals the sum of the values of $c'_v$ on the sides of
  $S$. Thus, we have $\delta c'_w = c_w$ and $w_f = 0$.
\end{proof}

\begin{theorem}\label{thm:complete_2obs}
  Let $f \from T \to J$ be a stable simplicial map from a tree to a path graph. Then the following
  statements are equivalent:
  \begin{enumerate}
  \item There are no 2-obstructors for $f$;
  \item $|f|$ lifts to an embedding.
  \end{enumerate}
\end{theorem}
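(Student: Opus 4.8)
The plan is to prove the nontrivial implication $(1)\Rightarrow(2)$ by chaining together the equivalences accumulated in this section, while $(2)\Rightarrow(1)$ is immediate: a lifting of $|f|$ precludes the existence of $n$-obstructors for every $n>1$ by \cref{thm:obstructors}; in particular there are no $2$-obstructors. Note also that, being stable, $f$ is non-degenerate, so all the machinery of this section applies to it.

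For $(1)\Rightarrow(2)$, first I would apply \cref{lemma:equiv_obstr} with $N=2$ to rephrase the absence of $2$-obstructors as the triviality of the covering map $p_2\from|T^{(2)}_f|\to|\wt{T}^{(2)}_f|$. By the discussion preceding the statement $w_f=[c_w]$, the triviality of $p_2$ is equivalent to the vanishing of the class $w_f\in H^1(|\wt{T}^{(2)}_f|;\Z_2)$. Since $f$ is a stable map from a graph to a path graph, \cref{thm:cohomology_equivalence} then gives $w_f=0$ if and only if $v_f=0$, where $v_f\in H^2(|\wGc|,|\wGcf|;\Z_2)$ is the van Kampen-type obstruction to $\Z_2$-approximability. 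Fixing an embedding $i\from|J|\hookrightarrow\R^2$ and using that the plane is an oriented surface, \cref{thm:z2_approx} converts $v_f=0$ into the $\Z_2$-approximability of $f$.

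At this point the hypothesis that $T$ is a \emph{tree} enters: \cref{thm:z2_imp_app} upgrades the $\Z_2$-approximability of $f$ (a map from a tree to a path graph embedded in $\R^2$) to genuine approximability by embeddings with respect to $i$. Finally, $f$ being a stable map from a graph to a path graph, \cref{thm:approx} identifies approximability by embeddings with the existence of a lifting of $|f|$ to an embedding, which closes the chain.

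I do not expect a deep obstacle here: the substance lies in the already proven theorems \cref{thm:approx}, \cref{thm:cohomology_equivalence}, and \cref{thm:z2_approx}, together with the imported result \cref{thm:z2_imp_app}. The only care needed is bookkeeping — verifying that the hypotheses required by each cited statement (stability of $f$, domain a tree, target a path graph, ambient surface $\R^2$ oriented) match those of the present theorem, and observing that the tree assumption is used \emph{solely} in the invocation of \cref{thm:z2_imp_app}; the remaining links in the chain hold for arbitrary stable maps from graphs to path graphs.
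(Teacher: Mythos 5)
Your proposal is correct and follows exactly the same chain as the paper's proof: Theorem~\ref{thm:obstructors} for the easy direction, then \cref{lemma:equiv_obstr} $\Rightarrow$ triviality of $p_2$ $\Rightarrow$ $w_f=0$ $\Rightarrow$ $v_f=0$ (\cref{thm:cohomology_equivalence}) $\Rightarrow$ $\Z_2$-approximability (\cref{thm:z2_approx}) $\Rightarrow$ approximability by embeddings (\cref{thm:z2_imp_app}, where the tree hypothesis is used) $\Rightarrow$ lifting (\cref{thm:approx}). Your bookkeeping of which hypothesis enters at which step matches the paper.
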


\begin{proof}
  If $f$ lifts to an embedding, the non-existence of 2-obstructors follows
  from~\cref{thm:obstructors}.

  Now assume that there are no 2-obstructors for $f$. Therefore, it follows
  from~\cref{lemma:equiv_obstr} that $p_2$ is trivial, and, thus, $w_f = 0$.

  Applying~\cref{thm:cohomology_equivalence}, we conclude that $v_f =
  0$. Hence,~\cref{thm:z2_approx} says that $f$ is $\Z_2$-approximable with respect to an embedding
  $|J| \hookrightarrow \R^2$. Since $T$ is a tree, by~\cref{thm:z2_imp_app}, $f$ is approximable by
  embeddings. Finally, $f$ lifts to an embedding by applying~\cref{thm:approx}.
\end{proof}

The proven result has the potential for various generalizations. For instance, we believe that the
non-existence of 2-obstructors condition remains sufficient for the existence of a lifting even for
an arbitrary, not necessarily stable, non-degenerate map from a graph to a segment.

Furthermore, instead of considering maps to a segment, we can consider maps to trees.  However, the
following example shows that the stated 2-obstructor condition is not sufficient for maps from
arbitrary graphs to trees, while the question of its sufficiency for maps from trees to trees
remains open.

\begin{figure}[h!]
  \centering
  \includegraphics[width=.99\linewidth,height=10cm,keepaspectratio]{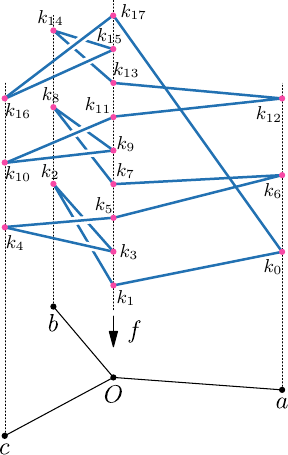}
  \caption{The 3-winding map $f \from C_{18} \to T$.}\label{fig:tripod}
\end{figure}

\begin{example}
  Let $T$ be a tripod with 4 vertices $O$, $a$, $b$, and $c$, where $O$ is the central vertex. Let
  $|f| \from S^1 \to |T|$ be a ``walking around'' $|T|$.

  Specifically, let $C_6$ be a hexagon triangulating $S^1$, whose six vertices $v_i$ numbered in
  the counterclockwise order starting from zero, and let $g \from C_6 \to T$ be simplicial map defined by
  \[
    \begin{array}{lr}
      f(v_0) = a, & f(v_1) = O, \\
      f(v_2) = b, & f(v_3) = O, \\
      f(v_4) = c, & f(v_5) = O. \\
    \end{array}
  \]

  Now, consider the composition $f \from C_{18} \xto{w} C_6 \xto{g} T$ illustrated
  in~\cref{fig:tripod}, where $C_{18}$ represents an 18-gon. Here the first map is a simplicial analogue
  of the standard 3-winding $w \from S^1 \to S^1$ defined as $z \mapsto z^3$ assuming that $S^1$ is
  embedded into the complex plane as the unit circle centered at zero. Explicitly,
  $w(k_i) = v_{\left(i \bmod 6\right)}$ where $k_0, \dots, k_{17}$ are the vertices of $C_{18}$
  numbered in the counterclockwise order.

  Note that $f$ has a 3-obstructor because $w$ does, and, therefore, it does not lift to an
  embedding.

  We claim that $f$ does not have any 2-obstructors. To demonstrate this, let us look at the graph
  $(C_{18})_f^{(2)}$.

  In this graph, a vertex corresponding to a pair $(k_i, k_j)$ of distinct vertices from $f^{-1}(O)$
  has degree two when $w(k_i) = w(k_j)$, meaning that it is also a vertex of $(C_{18})_w^{(2)}$, and
  degree one otherwise.

  Consider a vertex $(k_t, k_s)$ of $(C_{18})_f^{(2)}$ where $f(k_t) = f(k_s) \neq O$. This is a
  degree-four vertex. Moreover, two of the four vertices adjacent to it have degree one, precisely
  those that do not lie in $(C_{18})_w^{(2)}$.

  Consequently, by removing the vertices of degree one from $(C_{18})_f^{(2)}$, we get
  $(C_{18})_w^{(2)}$. Since the latter graph does not have a 2-obstructor path, the same holds true
  for the former graph.
\end{example}

\section*{Acknowledgements}

I express my strong gratitude to Sergey Melikhov for introducing me to this subject and for the
invaluable discussions and patient guidance he provided throughout my research. I am also thankful
to Olga Frolkina for her careful reading and critique of the draft version of this text, and to
Francis Lazarus and Martin Deraux for many fruitful comments on the final version.

Last but not least, I am very grateful to Petr Akhmetiev and the other participants of the Geometric
Topology Seminar at Steklov Mathematical Institute for the valuable comments and ideas expressed
during the discussion of some of the results presented in the present paper.

\bibliography{main}
\bibliographystyle{amsplain}

\end{document}